\renewcommand{\uppercasenonmath}[1]{}
\numberwithin{equation}{section} \theoremstyle{plain}
\newtheorem*{thm*}{Main Theorem}
\newtheorem{thm}{Theorem}[section]
\newtheorem{cor}[thm]{Corollary}
\newtheorem*{cor*}{Corollary}
\newtheorem{lem}[thm]{Lemma}
\newtheorem*{lem*}{Lemma}
\newtheorem*{fact*}{Fact}
\newtheorem*{nota*}{Notation}
\newtheorem{prop}[thm]{Proposition}
\newtheorem*{prop*}{Proposition}
\newtheorem{rem}[thm]{Remark}
\newtheorem*{rem*}{Remark}
\newtheorem*{observation*}{Observation}
\newtheorem*{exa*}{Example}
\newtheorem{df}[thm]{Definition}
\newtheorem*{df*}{Definition}
\newtheorem*{con*}{Construction}
\renewcommand{\geq}{\geqslant}
\renewcommand{\leq}{\leqslant}
\begin{document}
\begin{center}
{\large  \bf  Cosupport for triangulated categories}

\vspace{0.5cm} Xiaoyan Yang\\
Department of Mathematics, Northwest Normal University, Lanzhou 730070,
China
E-mail: yangxy@nwnu.edu.cn
\end{center}

\bigskip
\centerline { \bf  Abstract}
\leftskip10truemm \rightskip10truemm \noindent The goal of the article is to better understand cosupport in triangulated categories since it is still quite mysterious. We study boundedness of local cohomology and local homology functors using Koszul objects, give some characterizations of cosupport and get some results that, in special
cases, recover and generalize the known results about the usual cosupport. Also we include some computations of cosupport, settle the comparison of support and cosupport of cohomologically finite objects.  Finally, we assign to any object of the
category a subset of $\mathrm{Spec}R$, called the big cosupport.\\
\vbox to 0.3cm{}\\
{\it Keywords:} Koszul object; cosupport; support\\
{\it 2010 Mathematics Subject Classification:} 13D45; 18E35; 18E30.

\leftskip0truemm \rightskip0truemm
\bigskip
\section* { \bf Introduction}
Over the last few decades, the theory of support varieties has played an increasing important role in various aspects of representation theory as this theory
has made it possible to apply methods of algebraic geometry to obtain representation theoretic information. The prototype for this has been Quillen$'$s \cite{Q} description of the algebraic variety corresponding to the cohomology ring of a finite group, based on which Carlson \cite{C} introduced support varieties for modular representations.
 Their work has inspired the development of analogous theories in various
contexts:
restricted Lie algebras \cite{FP}, complete intersections in commutative algebra \cite{A,AB}, certain finite dimensional algebras \cite{EHS}, finite group schemes \cite{FP1,FP2}.

Based on certain localization functors on the category, Benson, Iyengar and Krause \cite{BIK,BIK1,BIK2} developed theories of support and cosupport for objects in any compactly generated triangulated categories admitting set-indexed coproducts. The foundation of their approach is
 constructions of local cohomology and local homology functors on triangulated categories with respect to a central ring of operators. These works have influenced some of their subsequent work on this topic: Avramov
and Iyengar \cite{AI} address the problem of realizing modules over arbitrary associative rings with
prescribed cohomological support; Krause \cite{HK} studies the classification of thick subcategories of modules over commutative noetherian rings. Lastly, their works play a pivotal role on a classification theorem for the localizing subcategories of the stable module category of a finite group (see \cite{BIK3}).

Despite the many ways in which cosupport is dual to the more established notion of support, cosupport seems to be  more elusive, even in the setting of commutative noetherian rings. Let $R$ be a commutative noetherian graded ring and $\mathscr{T}$ a compactly generated
$R$-linear triangulated category with small coproducts. This paper focuses on cosupport in $\mathscr{T}$.

Given a point for $\mathfrak{p}\in\mathrm{Spec}R$. For
each compact object $C$ in $\mathscr{T}$ and each injective $R$-module $I$, Brown representability
yields an object $T_{C/\hspace{-0.1cm}/\mathfrak{p}}(I)$ in $\mathscr{T}$ such that
\begin{center}$\mathrm{Hom}^\ast_\mathscr{T}(-,T_{C/\hspace{-0.1cm}/\mathfrak{p}}(I))=\mathrm{Hom}^\ast_R(\mathrm{H}^\ast_{C/\hspace{-0.1cm}/\mathfrak{p}}(-),I)$.\end{center}
In Section 3, we characterize cosupport of objects using the boundedness of local cohomology and local homology, seek an efficient way to compute cosupport.

\vspace{2mm} \noindent{\bf Theorem A.}\label{Th1.4} {\it{For any object $X$ in $\mathscr{T}$, one has that \begin{center}$\mathrm{cosupp}_{R}X=\bigcup_{C\in\mathscr{T}^c,\ \mathfrak{p}\in\mathrm{Spec}R}\mathrm{cosupp}_R\mathrm{Hom}^\ast_\mathscr{T}(V^{\mathcal{Z}(\mathfrak{p})}X,T_{C/\hspace{-0.1cm}/\mathfrak{p}}(I(\mathfrak{p})))$,\end{center}where $I(\mathfrak{p})$ is the injective envelope of $R/\mathfrak{p}$ for $\mathfrak{p}\in\mathrm{Spec}R$.}}

\vspace{2mm}
In Section 4, we settle the comparison of support and cosupport, obtain the following inclusion for cohomologically finite objects.

\vspace{2mm} \noindent{\bf Theorem B.}\label{Th1.4} {\it{Let $X$ be an cohomologically finite object in $\mathscr{T}$. If $\mathscr{T}$ is generated by a single compact object, or $\mathrm{dim}(\mathrm{supp}_{R}X)<\infty$, then there is an inequality
 \begin{center}$\mathrm{cosupp}_{R}X\subseteq\mathrm{supp}_{R}X$.\end{center}}}

\vspace{2mm}
In Section 5, the big cosupport of objects are studied, and the relation of this notion to cosupport are found.

Our results show that one can get a satisfactory version of the cosupport theories
in the setting of triangulated categories, compatible with the known results for the cosupport for modules and complexes.

\bigskip
\section{\bf Preliminaries}
This section collects some notions of triangulated categories
for use throughout this paper.
For terminology we shall follow \cite{BIK}, \cite{BIK1}, \cite{BIK2} and \cite{ASS}.

\vspace{2mm}
{\bf Compact generation.} Let $\mathscr{T}$ be a triangulated category admitting set-indexed coproducts. An object $C$ in $\mathscr{T}$ is compact if the functor $\mathrm{Hom}_\mathscr{T}(C,-)$ commutes with all coproducts;
we write $\mathscr{T}^c$ for the full subcategory of compact objects in $\mathscr{T}$. The category $\mathscr{T}$ is compactly
generated if it is generated by a set of compact objects. A localizing subcategory of
$\mathscr{T}$ is a full triangulated subcategory that is closed under taking coproducts. We write $\mathrm{Loc}_\mathscr{T}(\mathscr{X})$
for the smallest localizing subcategory containing a given class of objects $\mathscr{X}$ in $\mathscr{T}$, and call it
the localizing subcategory generated by $\mathscr{X}$.
 Analogously, a colocalizing subcategory of $\mathscr{T}$ is a full triangulated
subcategory that is closed under taking all products, and $\mathrm{Coloc}_\mathscr{T}(\mathscr{X})$ denotes the
colocalizing subcategory of $\mathscr{T}$ that is cogenerated by $\mathscr{X}$. We write $\mathrm{Thick}_\mathscr{T}(\mathscr{X})$ for the smallest thick
subcategory containing $\mathscr{X}$.

Recall that we write $\Sigma$ for the suspension on $\mathscr{T}$. For objects $X$ and $Y$ in $\mathscr{T}$, let
\begin{center}$\mathrm{Hom}^\ast_\mathscr{T}(X,Y)=\bigoplus_{i\in\mathbb{Z}}\mathrm{Hom}_\mathscr{T}(X,\Sigma^iY)$\end{center}
be the graded abelian group of morphisms. Set $\mathrm{End}^\ast_\mathscr{T}(X)=\mathrm{Hom}^\ast_\mathscr{T}(X,X)$, this is a
graded ring, and $\mathrm{Hom}^\ast_\mathscr{T}(X,Y)$ is a right $\mathrm{End}^\ast_\mathscr{T}(X)$ and left $\mathrm{End}^\ast_\mathscr{T}(Y)$-bimodule.

\vspace{2mm}
{\bf Central ring actions.}
Let $R$ be a commutative graded ring, i.e., $R$ is $\mathbb{Z}$-graded and satisfies $r\cdot s=(-1)^{|r||s|}s\cdot r$
for any homogeneous elements $r,s$ in $R$, where $|r|$ denote the degree of $r$. We say that the triangulated category $\mathscr{T}$ is
$R$-linear if there is a homomorphism $R\rightarrow Z^\ast(\mathscr{T})$ of graded rings, where
$Z^\ast(\mathscr{T})$ is the graded center of $\mathscr{T}$.  This yields for each object $X$ a homomorphism $\phi_X:R\rightarrow\mathrm{End}^\ast_\mathscr{T}(X)$
of graded
rings such that for all objects $X, Y\in\mathscr{T}$ the $R$-module structures on $\mathrm{Hom}^\ast_\mathscr{T}(X,Y)$
induced by $\phi_X$ and $\phi_Y$ agree, up to the usual sign rule.

We write $\mathrm{Spec}R$ for the set of homogeneous
prime ideals of $R$.
Fix a point $\mathfrak{p}\in\mathrm{Spec}R$. We write $R_\mathfrak{p}$ for the homogeneous localization of $R$ with respect to
$\mathfrak{p}$; it is a graded local ring in the sense of Bruns and Herzog \cite[1.5.13]{BH}, with maximal ideal
$\mathfrak{p}R_\mathfrak{p}$. Given an $R$-module $M$ we let $M_\mathfrak{p}$ denote the homogeneous localization of
$M$ at $\mathfrak{p}$. $M$ is called $\mathfrak{p}$-local if the
natural map $M\rightarrow M_\mathfrak{p}$ is bijective. $M$ is called $\mathfrak{p}$-torsion if each element of $M$ is annihilated by a power
of $\mathfrak{p}$. We also set \begin{center}$\mathcal{U}(\mathfrak{p})=\{\mathfrak{q}\in \mathrm{Spec}R\hspace{0.03cm}|\hspace{0.03cm} \mathfrak{q}\subseteq\mathfrak{p}\}$ and $\mathcal{Z}(\mathfrak{p})=\{\mathfrak{q}\in \mathrm{Spec}R\hspace{0.03cm}|\hspace{0.03cm} \mathfrak{q}\nsubseteq\mathfrak{p}\}$.\end{center}Given a homogeneous ideal $\mathfrak{a}$ in $R$, we set
\begin{center}$\mathcal{V}(\mathfrak{a})=\{\mathfrak{p}\in\textrm{Spec}R\hspace{0.03cm}|\hspace{0.03cm}\mathfrak{a}\subseteq\mathfrak{p}\}$.\end{center}Let $\mathcal{U}$ be a subset of $\mathrm{Spec}R$. The specialization closure of $\mathcal{U}$ is the set
\begin{center}$\mathrm{cl}\mathcal{U}=\{\mathfrak{p}\in\textrm{Spec}R\hspace{0.03cm}|\hspace{0.03cm}\textrm{there\ is}\ \mathfrak{q}\in\mathcal{U}\ \textrm{with}\ \mathfrak{q}\subseteq\mathfrak{p}\}$.\end{center}The subset $\mathcal{U}$ is
 specialization closed if $\mathrm{cl}\mathcal{U}=\mathcal{U}$. Note that the subsets $\mathcal{V}(\mathfrak{a})$ and $\mathcal{Z}(\mathfrak{p})$ are specialization closed.

From now on, $R$ denotes a commutative noetherian graded ring and $\mathscr{T}$ a compactly generated
$R$-linear triangulated category with set-indexed coproducts.

\vspace{2mm}
{\bf Local cohomology and homology.} An exact functor $L:\mathscr{T}\rightarrow \mathscr{T}$ is called localization functor if there exists a morphism
$\eta:\mathrm{Id}_\mathscr{T}\rightarrow L$ such that the morphism $L\eta:L\rightarrow L^2$ is invertible and $L\eta=\eta L$.
Let $\mathcal{V}$ be a specialization closed subset of $\mathrm{Spec}R$ and $L_\mathcal{V}$ the associated localization functor. By \cite[Definition 3.2]{BIK}, one then gets an exact
functor $\Gamma_\mathcal{V}$ on $\mathscr{T}$ and for each object $X$ an exact triangle
\begin{center}$\Gamma_\mathcal{V}X\rightarrow X\rightarrow L_\mathcal{V}X\rightsquigarrow$.\end{center}
We call $\Gamma_\mathcal{V}X$ the local cohomology of $X$ supported on $\mathcal{V}$, and the essential image of $\Gamma_\mathcal{V}$ is denoted by $\mathscr{T}_\mathcal{V}$.
By \cite[Corollary 6.5]{BIK}, the functors $L_\mathcal{V}$ and $\Gamma_\mathcal{V}$ on $\mathscr{T}$ preserve coproducts, and hence have right adjoints by Brown representability. Let $\Lambda^\mathcal{V}$ and $V^\mathcal{V}$ denote right adjoints of $\Gamma_\mathcal{V}$ and $V^\mathcal{V}$, respectively.
They induce a functorial exact triangle\begin{center}$V^\mathcal{V}X\rightarrow X\rightarrow \Lambda^\mathcal{V}X\rightsquigarrow$\end{center}
which gives rise to an exact local homology functor $\Lambda^\mathcal{V}:\mathscr{T}\rightarrow\mathscr{T}$. The essential image of $\Lambda^\mathcal{V}$ is denoted by $\mathscr{T}^\mathcal{V}$.

\vspace{2mm}
{\bf Koszul objects.}
Let $r\in R$ be a homogeneous element and $X$ an object in $\mathscr{T}$. We denote by $X/\hspace{-0.15cm}/r$
any object that appears in an exact triangle\begin{center}$X\xrightarrow{r}\Sigma^{|r|}X\rightarrow X/\hspace{-0.15cm}/r\rightsquigarrow$,\end{center}
call it a Koszul object of $r$ on $X$. It is well defined up to (non-unique) isomorphism. Given
a homogeneous ideal $\mathfrak{a}$ in $R$, we write $X/\hspace{-0.15cm}/\mathfrak{a}$ for any Koszul object obtained by iterating the
construction above with respect to some finite sequence of generators for $\mathfrak{a}$. This object may
depend on the choice of the minimal generating sequence for $\mathfrak{a}$.

\vspace{2mm}
{\bf Cohomologically finite objects.} For any objects $C\in\mathscr{T}^c$ and $X\in\mathscr{T}$, set $\mathrm{H}^\ast_{C}(X)=\mathrm{Hom}^\ast_\mathscr{T}(C,X)$. We say that an object $X$ in $\mathscr{T}$ is cohomologically finite with respect
to $C$ if $\mathrm{H}^\ast_{C}(X)$ is finitely generated as a graded $R$-module. $X$ is called cohomologically finite if it is cohomologically finite with respect to any $C\in\mathscr{T}^c$.
We also denote
\begin{center}$\mathrm{inf}_C(X)=\mathrm{inf}(\mathrm{H}^\ast_C(X))=\mathrm{inf}_C\{n\in\mathbb{Z}\hspace{0.03cm}|\hspace{0.03cm}\mathrm{H}^n_C(X)\neq0\}$,\end{center}
\begin{center}$\mathrm{sup}_C(X)=\mathrm{sup}(\mathrm{H}^\ast_C(X))=\mathrm{sup}_C\{n\in\mathbb{Z}\hspace{0.03cm}|\hspace{0.03cm}\mathrm{H}^n_C(X)\neq0\}$.\end{center}

\bigskip
\section{\bf Boundedness of local cohomology and homology}
In this section, we provide an explicit formulas for computing
boundedness of local cohomology and local homology using Koszul objects.

Let  $X_1\xrightarrow{u_1}X_2\xrightarrow{u_2}X_3\xrightarrow{u_3}\cdots$ be a sequence of morphisms in $\mathscr{T}$. Its homotopy colimit, denoted
by $\mathrm{hocolim}X_i$, is defined by an exact triangle
\begin{center}$\bigoplus_{i\geq 1}X_i\xrightarrow{\theta}\bigoplus_{i\geq 1}X_i\rightarrow\mathrm{hocolim}X_i\rightsquigarrow$,\end{center}
where $\theta$ is the map $(\mathrm{id}-u_i)$.

Fix a homogeneous element $r\in R$. For each object $X$ in $\mathscr{T}$, consider the following commutative diagram
\begin{center} $\xymatrix@C=23pt@R=20pt{
  X \ar@{=}[r]\ar[d]^r&X\ar@{=}[r]\ar[d]^{r^2}&X\ar@{=}[r]\ar[d]^{r^3} & \cdots \\
 \Sigma^{|r|}X \ar[d]\ar[r]^r& \Sigma^{|r^2|}X\ar[d]\ar[r]^r &\Sigma^{|r^3|}X\ar[d]\ar[r]^r&\cdots \\
 X/\hspace{-0.15cm}/r \ar[r]& X/\hspace{-0.15cm}/r^2\ar[r] & X/\hspace{-0.15cm}/r^3\ar[r]&\cdots}$
\end{center}
where each vertical sequence is given by the exact triangle defining $X/\hspace{-0.15cm}/r^n$, and the morphisms
in the last row are the ones induced by the commutativity of the upper squares.

\begin{lem}\label{lem:2.1}{\rm(\cite{BIK1})} {\it{Let $r\in R$ be a homogeneous element and $X$ an object in $\mathscr{T}$. Then the adjunction morphism $\Gamma_{\mathcal{V}(r)}X\rightarrow X$ induces an isomorphism
\begin{center}$\mathrm{hocolim}\Sigma^{-1}(X/\hspace{-0.15cm}/r^n)\stackrel{\sim}\longrightarrow\Gamma_{\mathcal{V}(r)}X$.\end{center}}}
\end{lem}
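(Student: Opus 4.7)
The plan is to produce an exact triangle $Y \to X \to M \rightsquigarrow$ whose outer terms are identified with $\Gamma_{\mathcal{V}(r)}X$ and $L_{\mathcal{V}(r)}X$ respectively, where $Y = \mathrm{hocolim}\,\Sigma^{-1}(X/\hspace{-0.15cm}/r^n)$ and $M$ is the telescope of multiplication by $r$ on $X$, and then to conclude by uniqueness of the local cohomology triangle.

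First, I would rotate the Koszul triangle defining $X/\hspace{-0.15cm}/r^n$ into
\[\Sigma^{-1}(X/\hspace{-0.15cm}/r^n) \longrightarrow X \xrightarrow{\,r^n\,} \Sigma^{|r^n|}X \rightsquigarrow,\]
and read the commutative diagram displayed before the statement as a morphism between three sequential diagrams in $\mathscr{T}$: the constant diagram $X$, the telescope $X \xrightarrow{r} \Sigma^{|r|}X \xrightarrow{r} \Sigma^{2|r|}X \to \cdots$, and the sequence $\Sigma^{-1}(X/\hspace{-0.15cm}/r^n)$. Since the homotopy colimit in a triangulated category with coproducts is defined by a cone construction, passage to hocolim converts such a termwise triangle of sequences into an exact triangle. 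Applying this columnwise yields
\[\mathrm{hocolim}\,\Sigma^{-1}(X/\hspace{-0.15cm}/r^n) \longrightarrow X \longrightarrow \mathrm{hocolim}\bigl(X \xrightarrow{r} \Sigma^{|r|}X \xrightarrow{r} \cdots\bigr) \rightsquigarrow,\]
in which the middle map is the canonical map into the telescope.

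Next, I would identify the right-hand term, call it $M$, with $L_{\mathcal{V}(r)}X$. Shifting the telescope by one step is an isomorphism, so multiplication by $r$ acts invertibly on $M$; but on any object of $\mathscr{T}_{\mathcal{V}(r)}$ the action of $r$ is locally nilpotent, hence $\mathrm{Hom}_\mathscr{T}^\ast(Z,M) = 0$ for every $Z \in \mathscr{T}_{\mathcal{V}(r)}$. This forces $\Gamma_{\mathcal{V}(r)}M = 0$ and $M \xrightarrow{\sim} L_{\mathcal{V}(r)}M$. Conversely, each $X/\hspace{-0.15cm}/r^n$ is annihilated by a power of $r$ (standard consequence of the defining triangle and centrality of $r$) and therefore lies in $\mathscr{T}_{\mathcal{V}(r)}$; since this subcategory is closed under suspensions and homotopy colimits, $\mathrm{hocolim}\,\Sigma^{-1}(X/\hspace{-0.15cm}/r^n)$ belongs to it as well. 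Applying $L_{\mathcal{V}(r)}$ to the triangle above now gives $L_{\mathcal{V}(r)}X \xrightarrow{\sim} L_{\mathcal{V}(r)}M = M$, and consequently the composite $X \to M$ must agree with the localization unit. Comparing the resulting triangle with the defining triangle $\Gamma_{\mathcal{V}(r)}X \to X \to L_{\mathcal{V}(r)}X \rightsquigarrow$ and invoking uniqueness of the fiber over the common map $X \to L_{\mathcal{V}(r)}X$ produces an isomorphism $\mathrm{hocolim}\,\Sigma^{-1}(X/\hspace{-0.15cm}/r^n) \xrightarrow{\sim} \Gamma_{\mathcal{V}(r)}X$ compatible with the adjunction morphism.

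The principal obstacle is packaging the two standard features of the Benson--Iyengar--Krause support theory needed for Step 2: that $\mathscr{T}_{\mathcal{V}(r)}$ coincides with the localizing subcategory of $r$-power-torsion objects, and that its right orthogonal consists exactly of the objects on which $r$ acts invertibly. Once these are granted, the identification of $M$ with $L_{\mathcal{V}(r)}X$, and hence the whole argument, is forced; this is the reason the lemma is cited from BIK rather than reproved from first principles.
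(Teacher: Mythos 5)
The paper states this lemma with a citation to \cite{BIK1} and gives no proof of its own, so there is no in-paper argument to compare against; your telescope argument is the standard route and is essentially the proof in the cited source. The one step that merits more care is the assertion that ``passage to hocolim converts such a termwise triangle of sequences into an exact triangle.'' Cones are not functorial, so this does not follow by merely ``applying hocolim''; the correct justification is the $3\times 3$ lemma applied to the commutative square formed by the telescope morphisms $\mathrm{id}-\mathrm{shift}$ on $\bigoplus_n\Sigma^{-1}(X/\hspace{-0.15cm}/r^n)$ and on $\bigoplus_n X$, whose horizontal cones are (coproducts of) the rotated Koszul triangles and whose vertical cones are the respective homotopy colimits. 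This does yield the triangle $\mathrm{hocolim}\,\Sigma^{-1}(X/\hspace{-0.15cm}/r^n)\to X\to M\rightsquigarrow$, so the claim is correct, only under-justified. Beyond that, your identification of $M$ with $L_{\mathcal{V}(r)}X$ and the uniqueness-of-localization argument are sound, and you rightly isolate the two inputs from Benson--Iyengar--Krause's theory that the argument rests on: that $\mathscr{T}_{\mathcal{V}(r)}$ is the localizing subcategory of $r$-power-torsion objects, and that its right orthogonal consists precisely of the objects on which $r$ acts invertibly.
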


The following result provides a formula for computing $\mathrm{inf}_C(\Gamma_{\mathcal{V}(r)}X)$, which recovers part of \cite[Theorem 2.1]{FI}.

\begin{prop}\label{lem:2.2}{\it{Let $r\in R$ be a homogeneous element and $X$ an object in $\mathscr{T}$.

$\mathrm{(1)}$ If $|r|\leq0$, then $\mathrm{inf}_C(X/\hspace{-0.15cm}/r)+1=
\mathrm{inf}_C(\Gamma_{\mathcal{V}(r)}X)$ for any $C\in\mathscr{T}^c$.

$\mathrm{(2)}$ If $|r|>0$, then $\mathrm{inf}_C(X/\hspace{-0.15cm}/r)+|r|=\mathrm{inf}_C(\Gamma_{\mathcal{V}(r)}X)$ for any $C\in\mathscr{T}^c$.\\
In particular,
if $\mathfrak{a}$ is
a homogeneous ideal of $R$ then
\begin{center}$\mathrm{H}^{\ast}_C(\Gamma_{\mathcal{V}(\mathfrak{a})}X)\neq0\Longleftrightarrow\mathrm{H}^{\ast}_{C/\hspace{-0.1cm}/\mathfrak{a}}(X)\neq0
\Longleftrightarrow\mathrm{H}^{\ast}_C(X/\hspace{-0.15cm}/\mathfrak{a})\neq0,\ \forall\ C\in\mathscr{T}^c$.\end{center}}}
\end{prop}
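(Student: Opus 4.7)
The plan is to combine Lemma~\ref{lem:2.1} with the long exact sequences obtained by applying $\mathrm{Hom}^{*}_\mathscr{T}(C,-)$ to the Koszul triangles $X\xrightarrow{r^n}\Sigma^{n|r|}X\to X/\hspace{-0.15cm}/r^n\rightsquigarrow$. Since $C$ is compact, applying $\mathrm{Hom}^{*}_\mathscr{T}(C,-)$ to the defining triangle of a homotopy colimit collapses to a sequential colimit, so Lemma~\ref{lem:2.1} yields
\[
\mathrm{H}^m_C(\Gamma_{\mathcal{V}(r)}X)\;\cong\;\mathrm{colim}_{n}\,\mathrm{H}^{m-1}_C(X/\hspace{-0.15cm}/r^n),
\]
which reduces locating $\mathrm{inf}_C(\Gamma_{\mathcal{V}(r)}X)$ to identifying the smallest degree in which this colimit is nonzero.

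Applying $\mathrm{Hom}^{*}_\mathscr{T}(C,-)$ to each Koszul triangle produces the short exact sequence
\[
0\to \mathrm{H}^{k+n|r|}_C(X)\big/r^n\mathrm{H}^k_C(X)\longrightarrow \mathrm{H}^k_C(X/\hspace{-0.15cm}/r^n)\longrightarrow (0:_{r^n})\mathrm{H}^{k+1}_C(X)\to 0,
\]
and the commutative diagram displayed just before Lemma~\ref{lem:2.1} identifies the transitions $X/\hspace{-0.15cm}/r^n\to X/\hspace{-0.15cm}/r^{n+1}$ as multiplication by $r$ on the cokernel summand and as the tautological inclusion of annihilators on the kernel summand. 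For (1) with $|r|\leq 0$ a degreewise inspection shows that the lowest degree at which either summand is nonzero is stable in $n$, so $\mathrm{inf}_C(X/\hspace{-0.15cm}/r^n)=\mathrm{inf}_C(X/\hspace{-0.15cm}/r)$ for every $n\geq 1$ and the colimit inherits this infimum; the $+1$ then records the $\Sigma^{-1}$ in Lemma~\ref{lem:2.1}. For (2) with $|r|>0$ the cokernel summand at $k=\mathrm{inf}_C(X)-|r|$ already recovers $\mathrm{H}^{\mathrm{inf}_C(X)}_C(X)$, giving $\mathrm{inf}_C(X/\hspace{-0.15cm}/r)=\mathrm{inf}_C(X)-|r|$. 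To pin down the infimum of the colimit I invoke the localization triangle $\Gamma_{\mathcal{V}(r)}X\to X\to L_{\mathcal{V}(r)}X\rightsquigarrow$: since $r$ acts invertibly on $L_{\mathcal{V}(r)}X$, the cokernel contributions at degrees below $\mathrm{inf}_C(X)$ cascade through $r$-multiplication into parts of $\mathrm{H}^{*}_C(X)$ on which $r$ is non-torsion and so die in the colimit, leaving the first nonvanishing degree exactly at $m=\mathrm{inf}_C(X)=\mathrm{inf}_C(X/\hspace{-0.15cm}/r)+|r|$.

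For the ``in particular'', parts (1)--(2) give that $\mathrm{H}^{*}_C(\Gamma_{\mathcal{V}(r)}X)\neq 0$ precisely when $\mathrm{H}^{*}_C(X/\hspace{-0.15cm}/r)\neq 0$; iterating over a finite generating sequence for $\mathfrak{a}$ and reapplying Lemma~\ref{lem:2.1} propagates the equivalence to the ideal. The equivalence with $\mathrm{H}^{*}_{C/\hspace{-0.15cm}/\mathfrak{a}}(X)\neq 0$ comes from applying $\mathrm{Hom}^{*}_\mathscr{T}(-,X)$ to the triangle defining $C/\hspace{-0.15cm}/r$ and matching the resulting long exact sequence with the one obtained from the triangle defining $X/\hspace{-0.15cm}/r$; the two sequences coincide up to a fixed index shift, so the graded modules vanish simultaneously, and iteration over generators handles $\mathfrak{a}$. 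The step I expect to be most delicate is the colimit analysis in case~(2): individual $\mathrm{H}^{m-1}_C(X/\hspace{-0.15cm}/r^n)$ may be nonzero at every $n$ for degrees strictly below the claimed infimum, and one has to argue that those classes are annihilated by the $r$-multiplication transition system; the cleanest route is through the localization triangle and the invertibility of $r$ on $L_{\mathcal{V}(r)}X$.
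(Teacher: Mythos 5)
Your reduction to the homotopy colimit via Lemma~\ref{lem:2.1} and compactness of $C$ is the right starting move, and your identification of the transition maps on the $\mathrm{coker}/\ker$ summands is correct. But there is a genuine gap at the crux of both (1) and (2): you need not only that $\mathrm{inf}_C(X/\hspace{-0.15cm}/r^n)\geq\mathrm{inf}_C(X/\hspace{-0.15cm}/r)$ for all $n$ (which gives the inequality $\mathrm{inf}_C(\Gamma_{\mathcal{V}(r)}X)\geq\mathrm{inf}_C(X/\hspace{-0.15cm}/r)+1$), but also that some class in the bottom degree \emph{survives} the directed system. In case (1) you assert ``the colimit inherits this infimum,'' but that is not automatic: the transition on the cokernel summand is multiplication by $r$, so $\mathrm{colim}_n\bigl(\mathrm{H}^{k}_C(X)/r^n\mathrm{H}^{k}_C(X),\cdot r\bigr)$ can, a priori, vanish even if each term is nonzero; no argument is given that it does not. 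In case (2) you flag the analogous issue yourself, and the appeal to the invertibility of $r$ on $L_{\mathcal{V}(r)}X$ is the right intuition, but it is never turned into a proof that the cokernel contributions in degrees below $\mathrm{inf}_C(X)$ collapse in the colimit. Your intermediate claim $\mathrm{inf}_C(\Gamma_{\mathcal{V}(r)}X)=\mathrm{inf}_C(X)$ for $|r|>0$ is correct but unproven by this route.

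The tool that closes the gap cleanly, and which the paper uses, is the exact triangle from \cite[Lemma~2.6]{BIK1}:
\[
\Gamma_{\mathcal{V}(r)}X\xrightarrow{\ r\ }\Sigma^{|r|}\Gamma_{\mathcal{V}(r)}X\longrightarrow X/\hspace{-0.15cm}/r\rightsquigarrow,
\]
i.e.\ the observation that $X/\hspace{-0.15cm}/r\simeq(\Gamma_{\mathcal{V}(r)}X)/\hspace{-0.15cm}/r$. Applying $\mathrm{Hom}^\ast_\mathscr{T}(C,-)$ to this single triangle gives, for $|r|>0$, the equality $\mathrm{inf}_C(X/\hspace{-0.15cm}/r)+|r|=\mathrm{inf}_C(\Gamma_{\mathcal{V}(r)}X)$ by a one-line degree count (no colimit needed at all); and for $|r|\leq0$ it supplies the reverse inequality $\mathrm{inf}_C(\Gamma_{\mathcal{V}(r)}X)\leq\mathrm{inf}_C(X/\hspace{-0.15cm}/r)+1$, complementing the $\geq$ direction you get from the colimit. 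The paper also obtains $\mathrm{inf}_C(X/\hspace{-0.15cm}/r^n)\geq\mathrm{inf}_C(X/\hspace{-0.15cm}/r)$ more directly from the octahedral axiom applied to $r$ and $r^2$, rather than via the $\mathrm{coker}/\ker$ splitting, but that part of your argument is sound. For the ``in particular'' equivalence your sketch is essentially fine: the comparison of $\mathrm{H}^\ast_{C/\hspace{-0.1cm}/\mathfrak{a}}(X)$ with $\mathrm{H}^\ast_C(X/\hspace{-0.15cm}/\mathfrak{a})$ is just the observation that both are built from the same cones of $r$-multiplications (on the first or second variable of $\mathrm{Hom}^\ast_\mathscr{T}$), and iterating over a finite generating sequence combined with (1)--(2) handles $\Gamma_{\mathcal{V}(\mathfrak{a})}$.
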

\begin{proof} Assume that $|r|\leq0$.
 The octahedral axiom yields a commutative diagram of exact triangles in $\mathscr{T}$:\begin{center}$\xymatrix@C=20pt@R=20pt{
   X\ar@{=}[d]\ar[r]^{r\ \ } & \Sigma^{|r|}X\ar[d]^r\ar[r]& X/\hspace{-0.15cm}/r\ar[d]\ar[r] &\Sigma X \ar@{=}[d]\\
   X\ar[r]^{r^2\ \ } & \Sigma^{|r^2|}X\ar[d]\ar[r]& X/\hspace{-0.15cm}/r^2\ar[d]\ar[r] &\Sigma X\\
   & \Sigma^{|r|}(X/\hspace{-0.15cm}/r)\ar[d]\ar@{=}[r]& \Sigma^{|r|}(X/\hspace{-0.15cm}/r)\ar[d]\\
   & \Sigma^{|r|+1}X \ar[r]&\Sigma (X/\hspace{-0.15cm}/r)}$
   \end{center}
The third column triangle induces an exact sequence \begin{center}$\mathrm{H}^{i+|r|-1}_C(X/\hspace{-0.15cm}/r)\rightarrow\mathrm{H}^i_C(X/\hspace{-0.15cm}/r)\rightarrow\mathrm{H}^i_C(X/\hspace{-0.15cm}/r^2)
\rightarrow\mathrm{H}^{i+|r|}_C(X/\hspace{-0.15cm}/r)$,\end{center}which implies that $\mathrm{inf}_C(X/\hspace{-0.15cm}/r^2)\geq\mathrm{inf}_C(X/\hspace{-0.15cm}/r)$. By repeating this process, one has that $\mathrm{inf}_C(X/\hspace{-0.15cm}/r^n)\geq\mathrm{inf}_C(X/\hspace{-0.15cm}/r)$ for any $n\geq1$. Hence Lemma \ref{lem:2.1} implies that $\mathrm{inf}_C(\Gamma_{\mathcal{V}(r)}X)\geq\mathrm{inf}_C(X/\hspace{-0.15cm}/r)+1$. On the other hand, \cite[Lemma 2.6]{BIK1} gives us an exact triangle $\Gamma_{\mathcal{V}(r)}X\xrightarrow{r}\Sigma^{|r|}\Gamma_{\mathcal{V}(r)}X\rightarrow X/\hspace{-0.15cm}/r\rightsquigarrow$, which induces the following  exact sequence\begin{center}$\mathrm{H}^{i+|r|}_C(\Gamma_{\mathcal{V}(r)}X)\rightarrow\mathrm{H}^i_C(X/\hspace{-0.15cm}/r)
\rightarrow\mathrm{H}^{i+1}_C(\Gamma_{\mathcal{V}(r)}X)\rightarrow\mathrm{H}^{i+|r|+1}_C(\Gamma_{\mathcal{V}(r)}X)$.\end{center}
Therefore, one gets that $\mathrm{inf}_C(\Gamma_{\mathcal{V}(r)}X)\leq\mathrm{inf}_C(X/\hspace{-0.15cm}/r)+1$.

Assume $|r|>0$. Then the exact triangle $\Gamma_{\mathcal{V}(r)}X\xrightarrow{r}\Sigma^{|r|}\Gamma_{\mathcal{V}(r)}X\rightarrow X/\hspace{-0.15cm}/r\rightsquigarrow$ implies that $\mathrm{inf}_C(X/\hspace{-0.15cm}/r)+|r|=\mathrm{inf}_C(\Gamma_{\mathcal{V}(r)}X)$.

This completes the proof of the desired equality.
\end{proof}

\begin{cor}\label{lem:1.14}{\it{Let $r_1,r_2$ be homogeneous elements in $R$. For any object $X$ in $\mathscr{T}$, we have
\begin{center}$\mathrm{inf}_C(\Gamma_{\mathcal{V}(r_1)}X)\leq\mathrm{inf}_C(\Gamma_{\mathcal{V}(r_1,r_2)}X),\ \forall\ C\in\mathscr{T}^c$.\end{center}In particular, for any homogeneous ideals $\mathfrak{b}\subseteq\mathfrak{a}$ of $R$ and any $C\in\mathscr{T}^c$, one has that $\mathrm{inf}_C(\Gamma_{\mathcal{V}(\mathfrak{b})}X)\leq\mathrm{inf}_C(\Gamma_{\mathcal{V}(\mathfrak{a})}X)$.}}
\end{cor}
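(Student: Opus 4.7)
The plan is to reduce the inequality to Proposition \ref{lem:2.2} by expressing $\Gamma_{\mathcal{V}(r_1,r_2)}X$ as an iterated local cohomology. First I would invoke the composition identity $\Gamma_{\mathcal{V}(r_1,r_2)} \cong \Gamma_{\mathcal{V}(r_2)}\circ\Gamma_{\mathcal{V}(r_1)}$, which follows from $\mathcal{V}(r_1,r_2) = \mathcal{V}(r_1)\cap\mathcal{V}(r_2)$ and the standard behavior of the functors $\Gamma_{\mathcal{V}}$ for specialization-closed subsets (cf.\ \cite{BIK}). Writing $M:=\Gamma_{\mathcal{V}(r_1)}X$, the target inequality becomes $\mathrm{inf}_C(M) \leq \mathrm{inf}_C(\Gamma_{\mathcal{V}(r_2)}M)$, a statement about a single element $r_2$ acting on $M$.

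Applying Proposition \ref{lem:2.2} to the pair $(M,r_2)$ gives $\mathrm{inf}_C(\Gamma_{\mathcal{V}(r_2)}M) = \mathrm{inf}_C(M/\hspace{-0.15cm}/r_2) + c_2$, where $c_2 = 1$ if $|r_2| \leq 0$ and $c_2 = |r_2|$ otherwise. It therefore suffices to prove $\mathrm{inf}_C(M) \leq \mathrm{inf}_C(M/\hspace{-0.15cm}/r_2) + c_2$. Applying $\mathrm{Hom}_{\mathscr{T}}^{\ast}(C,-)$ to the defining Koszul triangle $M \xrightarrow{r_2} \Sigma^{|r_2|}M \to M/\hspace{-0.15cm}/r_2 \rightsquigarrow$ yields the exact sequence
\[\mathrm{H}^{i+|r_2|}_C(M) \to \mathrm{H}^{i}_C(M/\hspace{-0.15cm}/r_2) \to \mathrm{H}^{i+1}_C(M),\]
and a direct degree count shows that for $i < \mathrm{inf}_C(M) - \max(1,|r_2|)$ both outer terms vanish, hence so does the middle one. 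Since $\max(1,|r_2|) = c_2$ in either case, the required bound follows.

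For the "in particular" clause on ideals $\mathfrak{b}\subseteq\mathfrak{a}$, I would extend a generating set $(b_1,\ldots,b_m)$ of $\mathfrak{b}$ to a set $(b_1,\ldots,b_m,a_1,\ldots,a_k)$ generating $\mathfrak{a}$, and iterate the two-element inequality one added generator at a time: at step $j$, take $(b_1,\ldots,b_m,a_1,\ldots,a_j)$ as the "first" ideal and $a_{j+1}$ as $r_2$. Concatenating the resulting inequalities produces $\mathrm{inf}_C(\Gamma_{\mathcal{V}(\mathfrak{b})}X) \leq \mathrm{inf}_C(\Gamma_{\mathcal{V}(\mathfrak{a})}X)$.

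The step most likely to require care is the composition identity $\Gamma_{\mathcal{V}(r_1,r_2)} \cong \Gamma_{\mathcal{V}(r_2)}\Gamma_{\mathcal{V}(r_1)}$; should it not be directly available from the foundations, one can bypass it by iterating Lemma \ref{lem:2.1}, expressing both sides as homotopy colimits of iterated Koszul objects and identifying them via $X/\hspace{-0.15cm}/(r_1,r_2) \cong (X/\hspace{-0.15cm}/r_1)/\hspace{-0.15cm}/r_2$.
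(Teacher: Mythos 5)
Your argument is correct and takes a genuinely different, cleaner route than the paper's own proof. The paper works directly with the Koszul triangle for $r_2$ acting on $X/\hspace{-0.15cm}/r_1$ and runs a four-way case analysis on the signs of $|r_1|$ and $|r_2|$, invoking Proposition~\ref{lem:2.2} separately in each case. You instead pass to $M := \Gamma_{\mathcal{V}(r_1)}X$ via the composition identity $\Gamma_{\mathcal{V}(r_1,r_2)} \cong \Gamma_{\mathcal{V}(r_2)}\Gamma_{\mathcal{V}(r_1)}$ (a standard fact for specialization-closed subsets in the sense of \cite{BIK}, since $\mathcal{V}(r_1,r_2)=\mathcal{V}(r_1)\cap\mathcal{V}(r_2)$), thereby reducing the two-element claim to the single-element monotonicity $\mathrm{inf}_C(M)\leq\mathrm{inf}_C(\Gamma_{\mathcal{V}(r_2)}M)$. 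Your unified degree count with the shift constant $c_2=\max(1,|r_2|)$ collapses the case split on $|r_1|$ entirely and handles both signs of $|r_2|$ in one line; in effect you reprove Corollary~\ref{lem:1.100}(1) on the fly and derive the present corollary from it. The iteration over generators for the ideal case is sound, because your reduction only requires the first slot to be some fixed object, so it applies verbatim with $M=\Gamma_{\mathcal{V}(\mathfrak{b}')}X$. One small caveat: the proposed ``bypass'' of the composition identity by iterating Lemma~\ref{lem:2.1} is not really a shortcut --- identifying the double homotopy colimit with $\Gamma_{\mathcal{V}(r_1,r_2)}X$ is essentially how one proves the composition identity in the first place. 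Since that identity is available in the foundations, this is a harmless aside and not a gap.
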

\begin{proof} The exact triangle $X/\hspace{-0.15cm}/r_1\xrightarrow{r_2}\Sigma^{|r_2|}X/\hspace{-0.15cm}/r_1\rightarrow X/\hspace{-0.15cm}/(r_1,r_2)\rightsquigarrow$ induces an exact sequence \begin{align}
\mathrm{H}^{i+|r_2|}_C(X/\hspace{-0.15cm}/r_1)\rightarrow\mathrm{H}^i_C(X/\hspace{-0.15cm}/(r_1,r_2))\rightarrow\mathrm{H}^{i+1}_C(X/\hspace{-0.15cm}/r_1)
\rightarrow\mathrm{H}^{i+|r_2|+1}_C(X/\hspace{-0.15cm}/r_1).
\label{exact04}\tag{\dag}\end{align}First assume that $|r_1|\leq0$.
If $|r_2|\leq0$, then the sequence $(\dag)$ implies that $\mathrm{inf}_C(X/\hspace{-0.15cm}/r_1)\leq\mathrm{inf}_C(X/\hspace{-0.15cm}/(r_1,r_2))+1$. Hence $\mathrm{inf}_C(\Gamma_{\mathcal{V}(r_1)}X)=\mathrm{inf}_C(X/\hspace{-0.15cm}/r_1)+1\leq\mathrm{inf}_C(X/\hspace{-0.15cm}/(r_1,r_2))+2=\mathrm{inf}_C(\Gamma_{\mathcal{V}(r_1,r_2)}X)$ by Proposition \ref{lem:2.2}(1). If $|r_2|>0$, the sequence $(\dag)$ implies that $\mathrm{inf}_C(X/\hspace{-0.15cm}/r_1)=\mathrm{inf}_C(X/\hspace{-0.15cm}/(r_1,r_2))+|r_2|$. So $\mathrm{inf}_C(\Gamma_{\mathcal{V}(r_1)}X)=\mathrm{inf}_C(X/\hspace{-0.15cm}/r_1)+1=\mathrm{inf}_C(X/\hspace{-0.15cm}/(r_1,r_2))+|r_2|+1
=\mathrm{inf}_C\Gamma_{\mathcal{V}(r_2)}(X/\hspace{-0.15cm}/r_1)+1=\mathrm{inf}_C(\Gamma_{\mathcal{V}(r_2)}X/\hspace{-0.15cm}/r_1)+1=\mathrm{inf}_C(\Gamma_{\mathcal{V}(r_1,r_2)}X)$ by Proposition \ref{lem:2.2}. Now assume that $|r_1|>0$.
If $|r_2|\leq0$, then $\mathrm{inf}_C(\Gamma_{\mathcal{V}(r_1)}X)=\mathrm{inf}_C(X/\hspace{-0.15cm}/r_1)+|r_1|\leq\mathrm{inf}_C(X/\hspace{-0.15cm}/(r_1,r_2))+|r_1|+1
=\mathrm{inf}_C(\Gamma_{\mathcal{V}(r_1)}(X/\hspace{-0.15cm}/r_2))+1=\mathrm{inf}_C(\Gamma_{\mathcal{V}(r_1,r_2)}X)$ by Proposition \ref{lem:2.2}.
If $|r_2|>0$, then $\mathrm{inf}_C(\Gamma_{\mathcal{V}(r_1)}X)=\mathrm{inf}_C(X/\hspace{-0.15cm}/r_1)+|r_1|=\mathrm{inf}_C(X/\hspace{-0.15cm}/(r_1,r_2))+|r_1|+|r_2|=\mathrm{inf}_C(\Gamma_{\mathcal{V}(r_1,r_2)}X)$ by Proposition \ref{lem:2.2}(2), as claimed.
\end{proof}

Since $\mathscr{T}$ is compactly generated with set-indexed coproducts, it follows from \cite[Proposition 8.4.6]{N} that $\mathscr{T}$ also admits set-indexed products. Let  $\cdots\xrightarrow{u_4}X_3\xrightarrow{u_3}X_2\xrightarrow{u_2}X_1$ be a sequence of morphisms in $\mathscr{T}$. Its homotopy limit, denoted
by $\mathrm{holim}X_i$, is defined by an exact triangle
\begin{center}$\mathrm{holim}X_i\rightarrow\prod_{i\geq 1}X_i\xrightarrow{\theta}\prod_{i\geq 1}X_i\rightsquigarrow$,\end{center}
where $\theta$ is the map $(\mathrm{id}-u_{i+1})$.

Fix a homogeneous element $r\in R$. For each object $X$ in $\mathscr{T}$, consider the following commutative diagram
\begin{center} $\xymatrix@C=23pt@R=20pt{
 \cdots\ar[r]^{r\ \ }& \Sigma^{-|r^3|}X\ar[d]^{r^3}\ar[r]^r &\Sigma^{-|r^2|}X\ar[d]^{r^2}\ar[r]^r&\Sigma^{-|r|}X\ar[d]^{r} \\
  \cdots \ar@{=}[r]&X\ar@{=}[r]\ar[d]&X\ar@{=}[r]\ar[d] & X \ar[d]\\
 \cdots \ar[r]& \Sigma^{-|r^3|}(X/\hspace{-0.15cm}/r^3)\ar[r] & \Sigma^{-|r^2|}(X/\hspace{-0.15cm}/r^2)\ar[r]&\Sigma^{-|r|}(X/\hspace{-0.15cm}/r)}$
\end{center}

The next result gives an explicit compute of $\Lambda^{\mathcal{V}(r)}X$.

\begin{lem}\label{lem:2.3}{\it{Let $r\in R$ be a homogeneous element and $X$ an object in $\mathscr{T}$. Then the adjunction morphism $X\rightarrow\Lambda^{\mathcal{V}(r)}X$ induces an isomorphism \begin{center}$\Lambda^{\mathcal{V}(r)}X\stackrel{\sim}\longrightarrow\mathrm{holim}\Sigma^{-|r^n|}(X/\hspace{-0.15cm}/r^n)$.\end{center}}}
\end{lem}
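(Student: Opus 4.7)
The strategy is to dualize the proof of Lemma~\ref{lem:2.1} by swapping coproducts and homotopy colimits for products and homotopy limits. From the commutative diagram preceding the statement I extract three towers in $\mathscr{T}$: the tower $A_n = \Sigma^{-|r^n|}X$ with transition maps $r$, the constant tower $B_n = X$ with identity transitions, and the target tower $C_n = \Sigma^{-|r^n|}(X/\hspace{-0.15cm}/r^n)$. The columns of the diagram are exactly the shifted Koszul triangles $A_n \xrightarrow{r^n} B_n \to C_n \rightsquigarrow$, and by commutativity these fit together into a morphism of towers.

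First I would produce an exact triangle of homotopy limits
\[
\mathrm{holim}\, A_n \longrightarrow \mathrm{holim}\, B_n \longrightarrow \mathrm{holim}\, C_n \rightsquigarrow
\]
by combining the defining triangles of the three holims with the $3\times 3$-lemma (using that $\prod$ preserves triangles). Since $B_n$ is constant with identity transitions, the map $\mathrm{id}-\mathrm{shift}$ on $\prod_n X$ is split epi, so $\mathrm{holim}\, B_n \cong X$.

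The key step is to identify $\mathrm{holim}\, A_n \cong V^{\mathcal{V}(r)}X$, where $V^{\mathcal{V}(r)}$ denotes the right adjoint of $L_{\mathcal{V}(r)}$. This is dual to the identification $\mathrm{hocolim}\,\Sigma^{|r^n|}X \cong L_{\mathcal{V}(r)}X$ that is implicit in Lemma~\ref{lem:2.1}, and can be verified via Yoneda: for each $Y\in\mathscr{T}$, the $\lim^1$ sequence for homotopy limits, combined with the hocolim presentation of $L_{\mathcal{V}(r)}Y$, gives
\[
\Hom_\mathscr{T}(Y, \mathrm{holim}\, A_n) \;\cong\; \lim_n \Hom_\mathscr{T}(\Sigma^{|r^n|}Y, X) \;\cong\; \Hom_\mathscr{T}(L_{\mathcal{V}(r)}Y, X) \;\cong\; \Hom_\mathscr{T}(Y, V^{\mathcal{V}(r)}X)
\]
naturally in $Y$, and one checks that the induced map $\mathrm{holim}\, A_n \to X$ coincides with the counit $V^{\mathcal{V}(r)}X \to X$. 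This identification is the main obstacle, since it is the only non-formal input and requires careful handling of the $\lim^1$ term (or, equivalently, a direct dualization of the argument establishing Lemma~\ref{lem:2.1}).

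Feeding these identifications into the triangle of holims yields $V^{\mathcal{V}(r)}X \to X \to \mathrm{holim}\, C_n \rightsquigarrow$; comparison with the defining triangle $V^{\mathcal{V}(r)}X \to X \to \Lambda^{\mathcal{V}(r)}X \rightsquigarrow$ forces $\mathrm{holim}\, C_n \cong \Lambda^{\mathcal{V}(r)}X$ by the uniqueness (up to the shared map) of the completing vertex. Because both triangles share the morphism $V^{\mathcal{V}(r)}X \to X$, the resulting isomorphism is compatible with the two natural maps out of $X$, which is precisely the assertion that the adjunction morphism $X\to \Lambda^{\mathcal{V}(r)}X$ induces the displayed isomorphism.
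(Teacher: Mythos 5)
Your strategy is the right one and is dual in spirit to the paper's, but the step you yourself flag as ``the main obstacle'' --- identifying $\mathrm{holim}\,\Sigma^{-|r^n|}X$ with $V^{\mathcal{V}(r)}X$ via Yoneda --- is precisely where the paper takes a different and more economical route, and the gap is more serious than a bookkeeping issue with $\lim^1$. In a triangulated category homotopy limits and colimits are only \emph{weak} (co)limits, so a compatible family of maps $\Sigma^{|r^n|}Y\to X$ does not canonically produce a map out of $\mathrm{hocolim}\,\Sigma^{|r^n|}Y$, and correspondingly the isomorphism $\Hom_\mathscr{T}(Y,\mathrm{holim}\,A_n)\cong\Hom_\mathscr{T}(\mathrm{hocolim}\,\Sigma^{|r^n|}Y,X)$ that your Yoneda chain needs is not natural in $Y$ for free; the Milnor sequences on both sides have matching $\lim$ and $\lim^1$ terms, but the connecting morphism of triangles exists only non-canonically. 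So as written, the proposal does not establish the required natural isomorphism and hence does not pin down $\mathrm{holim}\,A_n$ as $V^{\mathcal{V}(r)}X$.

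The paper sidesteps this by proving a strictly weaker fact that still suffices. It first observes, via \cite[Lemma 4.12]{BIK2}, that each $\Sigma^{-|r^n|}(X/\hspace{-0.15cm}/r^n)$ lies in $\mathscr{T}^{\mathcal{V}(r)}$; since $\mathscr{T}^{\mathcal{V}(r)}$ is a colocalizing subcategory it is closed under products and hence under $\mathrm{holim}$, so the target $\mathrm{holim}\,\Sigma^{-|r^n|}(X/\hspace{-0.15cm}/r^n)$ is already $\Lambda^{\mathcal{V}(r)}$-fixed. It then applies $\Lambda^{\mathcal{V}(r)}$ to the map $X\to\mathrm{holim}\,\Sigma^{-|r^n|}(X/\hspace{-0.15cm}/r^n)$ and shows the fibre $\mathrm{holim}\,\Sigma^{-|r^n|}X$ is annihilated by $\Lambda^{\mathcal{V}(r)}$, not by identifying it with $V^{\mathcal{V}(r)}X$, but by testing against compact objects: for $C\in\mathscr{T}^c$, \cite[Proposition 2.9]{BIK1} gives $\Hom^\ast_\mathscr{T}(C,\Lambda^{\mathcal{V}(r)}\mathrm{holim}\,\Sigma^{-|r^n|}X)\cong\Hom^\ast_\mathscr{T}(\Gamma_{\mathcal{V}(r)}\mathrm{hocolim}\,\Sigma^{|r^n|}C,X)$, and the right-hand side vanishes because $\mathrm{hocolim}\,\Sigma^{|r^n|}C\cong L_{\mathcal{V}(r)}C$ is killed by $\Gamma_{\mathcal{V}(r)}$. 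Testing only against compacts lets one work with the Milnor sequence pointwise and avoids any naturality-in-$Y$ claim, which is exactly what your proposal requires but does not supply. If you want to salvage your route, you should replace the Yoneda identification by this ``annihilation of the fibre'' argument, or else cite a result that packages the holim/hocolim duality for you.
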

\begin{proof} By \cite[Lemma 4.12]{BIK2} each $\Sigma^{-|r^n|}(X/\hspace{-0.15cm}/r^n)$ is in $\mathscr{T}^{\mathcal{V}(r)}$, and hence so is $\mathrm{holim}\Sigma^{-|r^n|}(X/\hspace{-0.15cm}/r^n)$. So $\mathrm{holim}\Sigma^{-|r^n|}(X/\hspace{-0.15cm}/r^n)\stackrel{\sim}\rightarrow\Lambda^{\mathcal{V}(r)}\mathrm{holim}\Sigma^{-|r^n|}(X/\hspace{-0.15cm}/r^n)$. Fix $C\in\mathscr{T}^c$. By \cite[Proposition 2.9]{BIK1},  \begin{center}$\mathrm{Hom}^\ast_\mathscr{T}(C,\Lambda^{\mathcal{V}(r)}\mathrm{holim}\Sigma^{-|r^n|}X)
\cong\mathrm{Hom}^\ast_\mathscr{T}(\Gamma_{\mathcal{V}(r)}\mathrm{hocolim}\Sigma^{|r^n|}C,X)$=0,\end{center}which implies that $\Lambda^{\mathcal{V}(r)}X\stackrel{\sim}\rightarrow\Lambda^{\mathcal{V}(r)}\mathrm{holim}\Sigma^{-|r^n|}(X/\hspace{-0.15cm}/r^n)$.
This shows our claim.
\end{proof}

By analogy with the proof of Proposition \ref{lem:2.2}, one can give the foliowing formula for computing $\mathrm{sup}_C(\Lambda^{\mathcal{V}(r)}X)$, which recovers part of \cite[Theorem 4.1]{FI}.

\begin{prop}\label{lem:2.4}{\it{Let $r\in R$ be a homogeneous element and $X$ an object in $\mathscr{T}$.

$\mathrm{(1)}$ If $|r|\leq0$, then $\mathrm{sup}_C(X/\hspace{-0.15cm}/r)+|r|=
\mathrm{sup}_C(\Lambda^{\mathcal{V}(r)}X)$ for any $C\in\mathscr{T}^c$.

$\mathrm{(2)}$ If $|r|>0$, then $\mathrm{sup}_C(X/\hspace{-0.15cm}/r)+1=
\mathrm{sup}_C(\Lambda^{\mathcal{V}(r)}X)$ for any $C\in\mathscr{T}^c$.\\
In particular,
if $\mathfrak{a}$ is
a homogeneous ideal of $R$ then
\begin{center}$\mathrm{H}^{\ast}_C(\Lambda^{\mathcal{V}(\mathfrak{a})}X)\neq0\Longleftrightarrow\mathrm{H}^{\ast}_{C/\hspace{-0.1cm}/\mathfrak{a}}(X)\neq0
\Longleftrightarrow\mathrm{H}^{\ast}_C(X/\hspace{-0.15cm}/\mathfrak{a})\neq0$.\end{center}}}
\end{prop}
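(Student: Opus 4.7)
The plan is to follow the template of the proof of Proposition \ref{lem:2.2}, swapping local cohomology for local homology: Lemma \ref{lem:2.1} is replaced by Lemma \ref{lem:2.3}, the cofiber triangle for $\Gamma_{\mathcal{V}(r)}$ is replaced by a ``completion'' analog for $\Lambda^{\mathcal{V}(r)}$, and bounds on $\mathrm{inf}_C$ are replaced by bounds on $\mathrm{sup}_C$.

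First I would produce the analog of the cofiber triangle: apply the exact functor $\Lambda^{\mathcal{V}(r)}$ to the defining Koszul triangle $X\xrightarrow{r}\Sigma^{|r|}X\rightarrow X/\hspace{-0.15cm}/r\rightsquigarrow$. Since $X/\hspace{-0.15cm}/r\in\mathscr{T}^{\mathcal{V}(r)}$ (by \cite[Lemma 4.12]{BIK2} together with the closure of $\mathscr{T}^{\mathcal{V}(r)}$ under shifts), $\Lambda^{\mathcal{V}(r)}$ fixes $X/\hspace{-0.15cm}/r$ and one obtains the exact triangle
\begin{center}$\Lambda^{\mathcal{V}(r)}X\xrightarrow{r}\Sigma^{|r|}\Lambda^{\mathcal{V}(r)}X\rightarrow X/\hspace{-0.15cm}/r\rightsquigarrow$,\end{center}
which is the $\Lambda^{\mathcal{V}(r)}$-analog of the triangle used in Proposition \ref{lem:2.2}. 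Applying $\mathrm{H}^{\ast}_C$ yields the long exact sequence
\begin{center}$\mathrm{H}^{i-1}_C(X/\hspace{-0.15cm}/r)\rightarrow\mathrm{H}^i_C(\Lambda^{\mathcal{V}(r)}X)\xrightarrow{r}\mathrm{H}^{i+|r|}_C(\Lambda^{\mathcal{V}(r)}X)\rightarrow\mathrm{H}^i_C(X/\hspace{-0.15cm}/r)\rightarrow\mathrm{H}^{i+1}_C(\Lambda^{\mathcal{V}(r)}X)$,\end{center}
which will be the main computational tool.

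For part (2), $|r|>0$, the formula follows from a direct degree chase. Writing $s=\mathrm{sup}_C(\Lambda^{\mathcal{V}(r)}X)$, the positivity of $|r|$ forces $\mathrm{H}^{i+|r|}_C(\Lambda^{\mathcal{V}(r)}X)=0=\mathrm{H}^{i+1}_C(\Lambda^{\mathcal{V}(r)}X)$ for $i\geq s$, whence $\mathrm{H}^i_C(X/\hspace{-0.15cm}/r)=0$; at $i=s-1$ the sequence surjects onto $\mathrm{H}^s_C(\Lambda^{\mathcal{V}(r)}X)\neq 0$, forcing $\mathrm{H}^{s-1}_C(X/\hspace{-0.15cm}/r)\neq 0$ and hence $\mathrm{sup}_C(X/\hspace{-0.15cm}/r)=s-1$. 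In part (1) with $|r|<0$ the same strategy works: setting $d=-|r|\geq 1$, evaluation at $i=s+d$ gives $\mathrm{H}^{s+d}_C(X/\hspace{-0.15cm}/r)\cong\mathrm{H}^s_C(\Lambda^{\mathcal{V}(r)}X)\neq 0$, and $\mathrm{H}^i_C(\Lambda^{\mathcal{V}(r)}X)$ together with $\mathrm{H}^{i-d}_C(\Lambda^{\mathcal{V}(r)}X)$ both vanish for $i>s+d$, so $\mathrm{sup}_C(X/\hspace{-0.15cm}/r)=s+d=s-|r|$.

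The main obstacle is the borderline case $|r|=0$ of part (1): there the long exact sequence collapses to $\mathrm{H}^s_C(X/\hspace{-0.15cm}/r)\cong \mathrm{coker}\bigl(r\colon\mathrm{H}^s_C(\Lambda^{\mathcal{V}(r)}X)\rightarrow\mathrm{H}^s_C(\Lambda^{\mathcal{V}(r)}X)\bigr)$ and I must show this cokernel is nonzero --- a Nakayama-type statement for the ``$r$-adically complete'' object $\Lambda^{\mathcal{V}(r)}X$, dual to the use of local nilpotency of $r$ on $\Gamma_{\mathcal{V}(r)}X$ in Proposition \ref{lem:2.2}. My plan to resolve this is to invoke Lemma \ref{lem:2.3} to write $\Lambda^{\mathcal{V}(r)}X\cong\mathrm{holim}\,(X/\hspace{-0.15cm}/r^n)$ (the shifts vanish when $|r|=0$), combine the associated Milnor $\mathrm{lim}/\mathrm{lim}^{1}$ exact sequence with the octahedrally-produced triangle $X/\hspace{-0.15cm}/r^n\rightarrow X/\hspace{-0.15cm}/r^{n+1}\rightarrow \Sigma^{|r^n|}(X/\hspace{-0.15cm}/r)\rightsquigarrow$ (the direct analog of the triangle used in Proposition \ref{lem:2.2}), and exploit the fact that each $\mathrm{H}^{\ast}_C(X/\hspace{-0.15cm}/r^n)$ is annihilated by $r^n$ in order to run the Nakayama argument producing the required nonvanishing. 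Finally, the ``in particular'' equivalences for a homogeneous ideal $\mathfrak{a}$ will follow by iterating the single-element formulas along a generating sequence of $\mathfrak{a}$ together with the Koszul hom-tensor swap $\mathrm{H}^{\ast}_{C/\hspace{-0.1cm}/\mathfrak{a}}(X)\cong \mathrm{H}^{\ast}_C(X/\hspace{-0.15cm}/\mathfrak{a})$ obtained by comparing the iterated triangles $C\xrightarrow{r}\Sigma^{|r|}C\rightarrow C/\hspace{-0.15cm}/r$ and $X\xrightarrow{r}\Sigma^{|r|}X\rightarrow X/\hspace{-0.15cm}/r$ under $\mathrm{Hom}^{\ast}_\mathscr{T}(-,-)$.
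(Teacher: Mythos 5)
The paper offers no explicit proof here --- it simply says ``by analogy with Proposition~\ref{lem:2.2}'' --- so your job is to check that the dualization actually goes through, and your plan does essentially that. Your derivation of the triangle $\Lambda^{\mathcal{V}(r)}X\xrightarrow{r}\Sigma^{|r|}\Lambda^{\mathcal{V}(r)}X\rightarrow X/\hspace{-0.15cm}/r\rightsquigarrow$ by applying $\Lambda^{\mathcal{V}(r)}$ to the Koszul triangle and using $\Lambda^{\mathcal{V}(r)}(X/\hspace{-0.15cm}/r)\cong X/\hspace{-0.15cm}/r$ is the correct dual of the appeal to \cite[Lemma 2.6]{BIK1}, and the degree chases you run for $|r|>0$ and $|r|<0$ are valid. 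In fact for $|r|<0$ your route is cleaner than a literal dualization of the paper's Proposition~\ref{lem:2.2} proof: the octahedral estimate $\inf_C(X/\hspace{-0.15cm}/r^n)\geq\inf_C(X/\hspace{-0.15cm}/r)$ uses that $|r|\leq 0$ in an essential way, and its $\sup$-side analogue $\sup_C(X/\hspace{-0.15cm}/r^n)\leq\sup_C(X/\hspace{-0.15cm}/r)$ does \emph{not} follow the same way when $|r|<0$, so reducing everything to the single triangle is the right move there.

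The place where the proof is genuinely not yet complete is the $|r|=0$ case, and you correctly isolate it as the main obstacle, but the ``Nakayama argument'' framing does not by itself close the gap. After Lemma~\ref{lem:2.3} and the Milnor sequence, $\mathrm{H}^i_C(\Lambda^{\mathcal{V}(r)}X)$ sits in an extension of $\lim\mathrm{H}^i_C(X/\hspace{-0.15cm}/r^n)$ by $\lim^1\mathrm{H}^{i-1}_C(X/\hspace{-0.15cm}/r^n)$. The Nakayama-type reasoning (each $\mathrm{H}^\ast_C(X/\hspace{-0.15cm}/r^n)$ is $r^n$-torsion, so the inverse limit is $r$-adically separated, so any such module on which $r$ acts surjectively vanishes) handles the $\lim$ term, but the $\lim^1$ term is exactly what could push $\sup_C(\Lambda^{\mathcal{V}(r)}X)$ one degree above $\sup_C(X/\hspace{-0.15cm}/r)$, and Nakayama gives you nothing about a $\lim^1$. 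What one actually needs there is the Mittag--Leffler condition: writing $s=\sup_C(X/\hspace{-0.15cm}/r)$, the octahedral triangle $X/\hspace{-0.15cm}/r\rightarrow X/\hspace{-0.15cm}/r^{n+1}\rightarrow X/\hspace{-0.15cm}/r^{n}\rightsquigarrow$ (for $|r|=0$) together with $\mathrm{H}^{s+1}_C(X/\hspace{-0.15cm}/r)=0$ shows the transition maps $\mathrm{H}^{s}_C(X/\hspace{-0.15cm}/r^{n+1})\rightarrow\mathrm{H}^{s}_C(X/\hspace{-0.15cm}/r^{n})$ are surjective, hence $\lim^1\mathrm{H}^{s}_C(X/\hspace{-0.15cm}/r^n)=0$, which in turn gives $\sup_C(\Lambda^{\mathcal{V}(r)}X)\leq s$. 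You do list the octahedral triangle as an ingredient, so the proof very likely survives once written out, but as stated the plan would not finish the $|r|=0$ case without replacing the Nakayama step by this Mittag--Leffler step for the $\lim^1$.

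One further small caution, which is implicit in the paper too: the degree chase phrases part (2) as ``set $s=\sup_C(\Lambda^{\mathcal{V}(r)}X)$ and read off $\sup_C(X/\hspace{-0.15cm}/r)=s-1$,'' which tacitly assumes $s$ finite; when $|r|>0$ the long exact sequence alone does not bound $\sup_C(\Lambda^{\mathcal{V}(r)}X)$ from above in terms of $\sup_C(X/\hspace{-0.15cm}/r)$, so if one cares about the $s=\infty$ edge case the holim/Milnor argument is needed there as well. This is a defect the paper shares, so I note it only for completeness.
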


\begin{cor}\label{lem:1.100}{\it{Let $\mathfrak{a}$ be a homogeneous ideal of $R$ and $X$ an object in $\mathscr{T}$. Then

$\mathrm{(1)}$ $\mathrm{inf}_C(\Gamma_{\mathcal{V}(\mathfrak{a})}X)-\mathrm{inf}_C(X)\geq0$ for any $C\in\mathscr{T}^c$.

$\mathrm{(2)}$ $\mathrm{sup}_C(\Lambda^{\mathcal{V}(\mathfrak{a})}X)-\mathrm{sup}_C(X)\leq0$ for any $C\in\mathscr{T}^c$.

$\mathrm{(3)}$ $\mathrm{sup}_C(\Gamma_{\mathcal{V}(\mathfrak{a})}X)\geq\mathrm{sup}_C(\Lambda^{\mathcal{V}(\mathfrak{a})}X)$ for any $C\in\mathscr{T}^c$.

$\mathrm{(4)}$ $\mathrm{inf}_C(\Lambda^{\mathcal{V}(\mathfrak{a})}X)\leq\mathrm{inf}_C(\Gamma_{\mathcal{V}(\mathfrak{a})}X)$ for any $C\in\mathscr{T}^c$.\\
In particular, $\Gamma_{\mathcal{V}(\mathfrak{a})}X\neq0$ if and only if $\Lambda^{\mathcal{V}(\mathfrak{a})}X\neq0$.}}
\end{cor}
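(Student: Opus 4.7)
The plan is to establish parts (1)--(4) in order and then deduce the concluding biconditional from (3) and (4). For (1), I apply Corollary \ref{lem:1.14} to the pair of homogeneous ideals $(0) \subseteq \mathfrak{a}$: since $\mathcal{V}(0) = \mathrm{Spec}R$ the localization functor $L_{\mathcal{V}(0)}$ vanishes, so the triangle $\Gamma_{\mathcal{V}(0)}X \to X \to L_{\mathcal{V}(0)}X \rightsquigarrow$ forces $\Gamma_{\mathcal{V}(0)}X \cong X$, and Corollary \ref{lem:1.14} gives $\mathrm{inf}_C(X) = \mathrm{inf}_C(\Gamma_{\mathcal{V}(0)}X) \leq \mathrm{inf}_C(\Gamma_{\mathcal{V}(\mathfrak{a})}X)$.

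For (2), I would first establish a sup-version of Corollary \ref{lem:1.14}: for any homogeneous ideals $\mathfrak{b} \subseteq \mathfrak{a}$ of $R$, $\mathrm{sup}_C(\Lambda^{\mathcal{V}(\mathfrak{a})}X) \leq \mathrm{sup}_C(\Lambda^{\mathcal{V}(\mathfrak{b})}X)$. The argument mirrors the proof of Corollary \ref{lem:1.14}, now feeding the exact sequence $(\dag)$ into the $\mathrm{sup}$-formulas of Proposition \ref{lem:2.4} across the four cases $(|r_1|,|r_2|) \in \{\leq 0, > 0\}^2$. Taking $\mathfrak{b} = (0)$ and noting that $\Lambda^{\mathcal{V}(0)}X \cong X$ (since $L_{\mathcal{V}(0)} = 0$ forces its right adjoint $V^{\mathcal{V}(0)}$ to vanish as well) then yields (2).

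Items (3) and (4) follow from (1), (2), and the standard isomorphisms $\Gamma_{\mathcal{V}(\mathfrak{a})} \Lambda^{\mathcal{V}(\mathfrak{a})} \cong \Gamma_{\mathcal{V}(\mathfrak{a})}$ and $\Lambda^{\mathcal{V}(\mathfrak{a})} \Gamma_{\mathcal{V}(\mathfrak{a})} \cong \Lambda^{\mathcal{V}(\mathfrak{a})}$ from \cite{BIK}. For (3), I apply (2) to $\Gamma_{\mathcal{V}(\mathfrak{a})}X$ to obtain $\mathrm{sup}_C(\Lambda^{\mathcal{V}(\mathfrak{a})}X) = \mathrm{sup}_C(\Lambda^{\mathcal{V}(\mathfrak{a})} \Gamma_{\mathcal{V}(\mathfrak{a})} X) \leq \mathrm{sup}_C(\Gamma_{\mathcal{V}(\mathfrak{a})} X)$; for (4), I apply (1) to $\Lambda^{\mathcal{V}(\mathfrak{a})}X$ and use $\Gamma_{\mathcal{V}(\mathfrak{a})} \Lambda^{\mathcal{V}(\mathfrak{a})}X \cong \Gamma_{\mathcal{V}(\mathfrak{a})}X$. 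The concluding biconditional then reduces to the fact that compact generation of $\mathscr{T}$ makes $Y = 0$ equivalent to $\mathrm{H}^\ast_C(Y) = 0$ for every $C \in \mathscr{T}^c$: if $\Gamma_{\mathcal{V}(\mathfrak{a})}X = 0$ then $\mathrm{sup}_C(\Gamma_{\mathcal{V}(\mathfrak{a})}X) = -\infty$ for all $C$, and (3) forces $\Lambda^{\mathcal{V}(\mathfrak{a})}X = 0$; conversely, if $\Lambda^{\mathcal{V}(\mathfrak{a})}X = 0$ then (4) gives $\mathrm{inf}_C(\Gamma_{\mathcal{V}(\mathfrak{a})}X) = +\infty$ for all $C$, hence $\Gamma_{\mathcal{V}(\mathfrak{a})}X = 0$.

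The principal obstacle I expect is the sup-version of Corollary \ref{lem:1.14} invoked in step (2): the four-case check is routine but must be executed carefully, because the roles of $1$ and $|r|$ swap between the $\mathrm{inf}$-formulas of Proposition \ref{lem:2.2} and the $\mathrm{sup}$-formulas of Proposition \ref{lem:2.4}, so one must track which case produces which additive constant.
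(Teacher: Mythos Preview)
Your proposal is correct, and the route you take for (3) and (4) is genuinely different from the paper's. The paper argues all four parts element-by-element: for (1) it bounds $\inf_C(X/\hspace{-0.15cm}/r)$ in terms of $\inf_C(X)$ directly from the defining triangle and feeds this into Proposition~\ref{lem:2.2}; for (3) it works with the triangle $\Gamma_{\mathcal{V}(r)}X \xrightarrow{r} \Sigma^{|r|}\Gamma_{\mathcal{V}(r)}X \to X/\hspace{-0.15cm}/r \rightsquigarrow$ to bound $\sup_C(X/\hspace{-0.15cm}/r)$ by $\sup_C(\Gamma_{\mathcal{V}(r)}X)$ and then invokes Proposition~\ref{lem:2.4}. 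Your approach instead packages (1) and (2) as the $\mathfrak{b}=(0)$ case of Corollary~\ref{lem:1.14} (respectively its $\sup$-analogue), and then derives (3) and (4) structurally from the adjunction isomorphisms $\Lambda^{\mathcal{V}(\mathfrak{a})}\Gamma_{\mathcal{V}(\mathfrak{a})} \cong \Lambda^{\mathcal{V}(\mathfrak{a})}$ and $\Gamma_{\mathcal{V}(\mathfrak{a})}\Lambda^{\mathcal{V}(\mathfrak{a})} \cong \Gamma_{\mathcal{V}(\mathfrak{a})}$ (these live in \cite{BIK2}, not \cite{BIK}). Your deduction of (3)--(4) from (1)--(2) is cleaner and avoids a second round of Koszul-triangle case analysis; the cost is that you must actually write out the $\sup$-version of Corollary~\ref{lem:1.14}, which the paper never needs because it treats (3) directly. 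Either way the underlying work is the same four-case bookkeeping with Propositions~\ref{lem:2.2} and~\ref{lem:2.4}.
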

\begin{proof} We only prove (1) and (3) since the proof of (2) and (4) are similar.

(1) If $|r|\leq0$, then $\mathrm{inf}_C(X/\hspace{-0.15cm}/r)\geq\mathrm{inf}_C(X)-1$. Hence Proposition \ref{lem:2.2}(1) implies that $\mathrm{inf}_C(\Gamma_{\mathcal{V}(r)}X)-\mathrm{inf}_C(X)\geq0$. If $|r|>0$, then $\mathrm{inf}_C(X/\hspace{-0.15cm}/r)=\mathrm{inf}_C(X)-|r|$. Hence Proposition \ref{lem:2.2}(2) implies that $\mathrm{inf}_C(\Gamma_{\mathcal{V}(r)}X)-\mathrm{inf}_C(X)=0$, as desired.

(3) Consider the exact triangle $\Gamma_{\mathcal{V}(r)}X\xrightarrow{r}\Sigma^{|r|}\Gamma_{\mathcal{V}(r)}X\rightarrow X/\hspace{-0.15cm}/r\rightsquigarrow$.
If $|r|\leq0$, then $\mathrm{sup}_C(X/\hspace{-0.15cm}/r)\leq\mathrm{sup}_C(\Gamma_{\mathcal{V}(r)}X)-|r|$. Hence Proposition \ref{lem:2.4}(1) implies that $\mathrm{sup}_C(\Gamma_{\mathcal{V}(r)}X)\geq\mathrm{sup}_C(\Lambda^{\mathcal{V}(r)}X)$. If $|r|>0$, then $\mathrm{sup}_C(X/\hspace{-0.15cm}/r)=\mathrm{sup}_C(\Gamma_{\mathcal{V}(r)}X)-1$. Hence Proposition \ref{lem:2.4}(2) implies that
$\mathrm{sup}_C(\Gamma_{\mathcal{V}(r)}X)=\mathrm{sup}_C(\Lambda^{\mathcal{V}(r)}X)$, as desired.
\end{proof}

\begin{rem}\label{lem:1.8}{\rm (1) Let $R$ be a commutative noetherian $\mathbb{N}_0$-graded ring and $\mathscr{T}=\mathrm{D}(R)$ the derived category of DG-modules over $R$. Then $\mathfrak{m}=(\bigoplus_{i\geq1}R_i)\oplus\mathfrak{m}_0$ is the maximal ideal of $R$ where $(R_0,\mathfrak{m}_0)$ is a local ring. The number
$\mathrm{inf}(\Gamma_{\mathcal{V}(\mathfrak{m})}X)$ is exactly the depth of a DG-module $X$ introduced by Shaul in \cite{S}.

(2) Let $A$ be a commutative noetherian ring and $\mathscr{T}=\mathrm{D}(A)$ the derived category of $A$-complexes. Let $\mathfrak{a}$ be
an ideal in $A$ and
 $K$ the Koszul complex on a sequence of $n$ generators for $\mathfrak{a}$. Then
Proposition \ref{lem:2.2} implies that $\mathrm{inf}(\Gamma_{\mathcal{V}(\mathfrak{a})}X)=\mathrm{inf}(K\otimes_AX)+n$, this common value is exactly the $\mathfrak{a}$-depth of a complex $X$ introduced by Foxby and Iyengar in \cite{FI}. Also Proposition \ref{lem:2.4} implies that $-\mathrm{sup}(\Lambda^{\mathcal{V}(\mathfrak{a})}Y)=-\mathrm{sup}(K\otimes_AY)$, this common value is exactly the $\mathfrak{a}$-width of a complex $Y$ introduced by Foxby and Iyengar in \cite{FI}.}
\end{rem}

\bigskip
\section{\bf Cosupport}
Since cosupport is not as well understood as support, we devote this section to some characterizations of cosupport, and give the proof of Theorem A.

For each $\mathfrak{p}$ in $\mathrm{Spec}R$, denote the exact functor $V^{\mathcal{Z}(\mathfrak{p})}\Lambda^{\mathcal{V}(\mathfrak{p})}:\mathscr{T}\rightarrow\mathscr{T}$ by $\Lambda^\mathfrak{p}$. The essential image of $\Lambda^\mathfrak{p}$ is denoted by $\mathscr{T}^{\{\mathfrak{p}\}}$, it is a colocalizing subcategory of $\mathscr{T}$.

The cosupport of an object $X$ in $\mathscr{T}$ is a subset of $\mathrm{Spec}R$ defined as follows:
\begin{center}$\mathrm{cosupp}_RX=\{\mathfrak{p}\in \mathrm{Spec}R\hspace{0.03cm}|\hspace{0.03cm}\Lambda^\mathfrak{p}X\neq0\}$.\end{center}

For
each compact object $C$ in $\mathscr{T}$ and each injective $R$-module $I$, Brown representability
yields an object $T_C(I)$ in $\mathscr{T}$ and a natural isomorphism:
\begin{center}$\mathrm{Hom}^\ast_\mathscr{T}(-,T_C(I))=\mathrm{Hom}^\ast_R(\mathrm{H}^\ast_C(-),I)$.\end{center}
For $\mathfrak{p}\in\mathrm{Spec}R$, let $I(\mathfrak{p})$ denote the injective envelope of $R/\mathfrak{p}$. Then $\Sigma^iI(\mathfrak{p})$ cogenerate the category of $\mathfrak{p}$-local $R$-modules. For any object $X$ in $\mathscr{T}$, denote
$X(\mathfrak{p})=X_\mathfrak{p}/\hspace{-0.15cm}/\mathfrak{p}$.

Next we give an axiomatic description of cosupport via Koszul objects, analogous to the one for support in \cite[Proposition 5.12]{BIK}.

\begin{prop}\label{lem:2.13}{\it{Let $X$ be an object in $\mathscr{T}$. For each $\mathfrak{p}\in\mathrm{Spec}R$ and $C\in\mathscr{T}^c$, the following conditions are equivalent:

$\mathrm{(1)}$ $\mathrm{Hom}^\ast_\mathscr{T}(C,\Lambda^\mathfrak{p}X)\neq0$;

$\mathrm{(2)}$ $\mathrm{Hom}^\ast_\mathscr{T}(C/\hspace{-0.15cm}/\mathfrak{p},\Lambda^\mathfrak{p}X)\neq0$;

$\mathrm{(3)}$ $\mathrm{Hom}^\ast_\mathscr{T}(C/\hspace{-0.15cm}/\mathfrak{p},V^{\mathcal{Z}(\mathfrak{p})}X)\neq0$;

$\mathrm{(4)}$ $\mathrm{Hom}^\ast_\mathscr{T}(C/\hspace{-0.15cm}/\mathfrak{p},\Gamma_{\mathcal{V}(\mathfrak{p})}V^{\mathcal{Z}(\mathfrak{p})}X)\neq0$;

$\mathrm{(5)}$ $\mathrm{Hom}^\ast_\mathscr{T}(C(\mathfrak{p}),X)\neq0$;

$\mathrm{(6)}$ $\mathrm{Hom}^\ast_\mathscr{T}(\Lambda^\mathfrak{p}X,T_{C}(I(\mathfrak{p})))\neq0$;

$\mathrm{(7)}$ $\mathrm{Hom}^\ast_\mathscr{T}(\Lambda^\mathfrak{p}X,T_{C/\hspace{-0.1cm}/\mathfrak{p}}(I(\mathfrak{p})))\neq0$;

$\mathrm{(8)}$ $\mathrm{Hom}^\ast_\mathscr{T}(V^{\mathcal{Z}(\mathfrak{p})}X,T_{C/\hspace{-0.1cm}/\mathfrak{p}}(I(\mathfrak{p})))\neq0$.}}
\end{prop}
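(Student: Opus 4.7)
The plan is to split the eight conditions into two clusters and link them by two kinds of tools. For the ``internal'' conditions $(1)$--$(5)$, which involve only the functors $\Lambda^\mathfrak{p}$, $\Gamma_{\mathcal{V}(\mathfrak{p})}$, $V^{\mathcal{Z}(\mathfrak{p})}$ and Koszul objects on $\mathfrak{p}$, I will use Proposition \ref{lem:2.4}, the defining triangles of the local (co)homology functors, and the adjunctions $\Gamma_\mathcal{V}\dashv\Lambda^\mathcal{V}$ and $L_\mathcal{V}\dashv V^\mathcal{V}$. For the ``dual'' conditions $(6)$, $(7)$, $(8)$, which involve the Brown-representability objects $T_C(I(\mathfrak{p}))$ and $T_{C/\hspace{-0.1cm}/\mathfrak{p}}(I(\mathfrak{p}))$, I will match each with one of $(1)$, $(2)$, $(3)$ using the defining isomorphism $\mathrm{Hom}^\ast_\mathscr{T}(-,T_?(I(\mathfrak{p})))=\mathrm{Hom}^\ast_R(\mathrm{H}^\ast_?(-),I(\mathfrak{p}))$ together with the fact that $I(\mathfrak{p})$ is an injective cogenerator of the category of $\mathfrak{p}$-local graded $R$-modules.

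In the first cluster, $(1)\Leftrightarrow(2)$ comes from Proposition \ref{lem:2.4} applied to $\Lambda^\mathfrak{p} X$ with ideal $\mathfrak{p}$, noting that $\Lambda^{\mathcal{V}(\mathfrak{p})}\Lambda^\mathfrak{p} X\simeq\Lambda^\mathfrak{p} X$ because $V^{\mathcal{Z}(\mathfrak{p})}$ and $\Lambda^{\mathcal{V}(\mathfrak{p})}$ commute and $\Lambda^{\mathcal{V}(\mathfrak{p})}$ is idempotent. Applying the same proposition to $V^{\mathcal{Z}(\mathfrak{p})}X$ and using $\Lambda^{\mathcal{V}(\mathfrak{p})}V^{\mathcal{Z}(\mathfrak{p})}X=\Lambda^\mathfrak{p} X$ gives $(1)\Leftrightarrow(3)$, hence $(2)\Leftrightarrow(3)$. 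Applying $\mathrm{Hom}^\ast_\mathscr{T}(C/\hspace{-0.15cm}/\mathfrak{p},-)$ to the triangle $\Gamma_{\mathcal{V}(\mathfrak{p})}V^{\mathcal{Z}(\mathfrak{p})}X\to V^{\mathcal{Z}(\mathfrak{p})}X\to L_{\mathcal{V}(\mathfrak{p})}V^{\mathcal{Z}(\mathfrak{p})}X$ and using that $C/\hspace{-0.15cm}/\mathfrak{p}\in\mathscr{T}_{\mathcal{V}(\mathfrak{p})}$ kills the third term, yielding $(3)\Leftrightarrow(4)$. For $(3)\Leftrightarrow(5)$, the adjunction $L_{\mathcal{Z}(\mathfrak{p})}\dashv V^{\mathcal{Z}(\mathfrak{p})}$ rewrites $(3)$ as $\mathrm{Hom}^\ast_\mathscr{T}(L_{\mathcal{Z}(\mathfrak{p})}(C/\hspace{-0.15cm}/\mathfrak{p}),X)\neq 0$, and the identification $L_{\mathcal{Z}(\mathfrak{p})}(C/\hspace{-0.15cm}/\mathfrak{p})\simeq C_\mathfrak{p}/\hspace{-0.15cm}/\mathfrak{p}=C(\mathfrak{p})$, obtained from exactness of localization on the Koszul triangle, delivers $(5)$.

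For the second cluster, the defining isomorphism of $T_?(I(\mathfrak{p}))$ turns $(6)$, $(7)$, $(8)$ into the nonvanishing of $\mathrm{Hom}^\ast_R(\mathrm{H}^\ast_C(\Lambda^\mathfrak{p} X),I(\mathfrak{p}))$, $\mathrm{Hom}^\ast_R(\mathrm{H}^\ast_{C/\hspace{-0.1cm}/\mathfrak{p}}(\Lambda^\mathfrak{p} X),I(\mathfrak{p}))$, and $\mathrm{Hom}^\ast_R(\mathrm{H}^\ast_{C/\hspace{-0.1cm}/\mathfrak{p}}(V^{\mathcal{Z}(\mathfrak{p})}X),I(\mathfrak{p}))$ respectively. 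Each of the three arguments is $\mathfrak{p}$-local: the first two because $\Lambda^\mathfrak{p} X\in\mathscr{T}^{\{\mathfrak{p}\}}$ has $\mathfrak{p}$-local Hom-groups, and the third because the adjunction used in $(3)\Leftrightarrow(5)$ identifies it with $\mathrm{Hom}^\ast_\mathscr{T}(C(\mathfrak{p}),X)$, which is $\mathfrak{p}$-local since $C(\mathfrak{p})$ is. So the cogenerator property of $I(\mathfrak{p})$ produces $(1)\Leftrightarrow(6)$, $(2)\Leftrightarrow(7)$, and $(3)\Leftrightarrow(8)$. The main obstacle is careful bookkeeping of $\mathfrak{p}$-locality and of which Koszul and representability objects sit in $\mathscr{T}_{\mathcal{V}(\mathfrak{p})}$ versus $\mathscr{T}^{\mathcal{V}(\mathfrak{p})}$, so that the triangle arguments kill the correct terms and the cogenerator property applies; once these standard BIK-type identifications are in place, the eight implications close routinely.
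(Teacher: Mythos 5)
Your proposal is correct and uses essentially the same toolkit as the paper: Proposition 4 (the $\Lambda^{\mathcal{V}(\mathfrak{a})}$ version of the Koszul nonvanishing criterion), the three adjunctions $\Gamma_{\mathcal V}\dashv\Lambda^{\mathcal V}$, $L_{\mathcal V}\dashv V^{\mathcal V}$, and the inclusions of $\mathscr{T}_{\mathcal{V}(\mathfrak{p})}$/$\mathscr{T}^{\mathcal{V}(\mathfrak{p})}$, plus the Brown representability isomorphism and the fact that $I(\mathfrak{p})$ cogenerates $\mathfrak{p}$-local modules. The only organizational differences are minor: the paper derives $(1)\Leftrightarrow(2)$ by a three-step chain passing through $\Gamma_{\mathcal{V}(\mathfrak{p})}V^{\mathcal{Z}(\mathfrak{p})}X$ and uses the $\mathfrak{p}$-torsion property of $\mathrm{H}^\ast_C$ of that object, whereas you apply Proposition 4 directly to $\Lambda^{\mathfrak{p}}X$ and to $V^{\mathcal{Z}(\mathfrak{p})}X$ (cleaner, and also correct provided you cite the commutation $\Lambda^{\mathcal{V}(\mathfrak{p})}V^{\mathcal{Z}(\mathfrak{p})}\simeq V^{\mathcal{Z}(\mathfrak{p})}\Lambda^{\mathcal{V}(\mathfrak{p})}$); and the paper links $(8)$ to $(7)$ via the adjunction $\Lambda^{\mathcal{V}(\mathfrak{p})}\dashv\iota$ while you link $(8)$ directly to $(3)$. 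One small gap in your justification for $(8)$: the $\mathfrak{p}$-locality of $\mathrm{Hom}^\ast_\mathscr{T}(C(\mathfrak{p}),X)$ does not follow merely from $C(\mathfrak{p})$ being $\mathfrak{p}$-local as an object, but rather from the compact-object localization formula $\mathrm{Hom}^\ast_\mathscr{T}(D_\mathfrak{p},X)\cong\mathrm{Hom}^\ast_\mathscr{T}(D,X)_\mathfrak{p}$ for $D$ compact (BIK, Lemma 5.11), applied with $D=C/\hspace{-0.15cm}/\mathfrak{p}$; once you invoke that, your argument closes.
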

\begin{proof} (1) $\Leftrightarrow$ (2) $\mathrm{H}^\ast_C(\Lambda^\mathfrak{p}X)\neq0$ if and only if $\mathrm{H}^\ast_C(\Gamma_{\mathcal{V}(\mathfrak{p})}V^{\mathcal{Z}(\mathfrak{p})}X)\neq0$ by Propositions \ref{lem:2.2} and \ref{lem:2.4} if and only if $\mathrm{H}^\ast_{C/\hspace{-0.1cm}/\mathfrak{p}}(\Gamma_{\mathcal{V}(\mathfrak{p})}V^{\mathcal{Z}(\mathfrak{p})}X)\neq0$ since $\mathrm{H}^\ast_C(\Gamma_{\mathcal{V}(\mathfrak{p})}V^{\mathcal{Z}(\mathfrak{p})}X)$ is  $\mathfrak{p}$-torsion if and only if $\mathrm{H}^\ast_{C/\hspace{-0.1cm}/\mathfrak{p}}(\Lambda^\mathfrak{p}X)\neq0$ by Propositions \ref{lem:2.2} and \ref{lem:2.4} again.

(2) $\Leftrightarrow$ (3) By $(\Gamma_{\mathcal{V}(\mathfrak{p})},\Lambda^{\mathcal{V}(\mathfrak{p})})$ is an adjoint pair and $\Gamma_{\mathcal{V}(\mathfrak{p})}(C/\hspace{-0.15cm}/\mathfrak{p})\cong C/\hspace{-0.15cm}/\mathfrak{p}$.

(3) $\Leftrightarrow$ (4) This follows from that $\Gamma_{\mathcal{V}(\mathfrak{p})}$ is a right adjoint of the inclusion $\mathscr{T}_{\mathcal{V}(\mathfrak{p})}\rightarrow\mathscr{T}$.

(3) $\Leftrightarrow$ (5) By $(L_{\mathcal{Z}(\mathfrak{p})},V^{\mathcal{Z}(\mathfrak{p})})$ is an adjoint pair.

(1) $\Leftrightarrow$ (6) and (2) $\Leftrightarrow$ (7) follow from the fact that $\Sigma^iI(\mathfrak{p})$ cogenerate the category of $\mathfrak{p}$-local $R$-modules and the graded $R$-modules $\mathrm{H}^\ast_{C}(\Lambda^\mathfrak{p}X)$ and $\mathrm{H}^\ast_{C/\hspace{-0.1cm}/\mathfrak{p}}(\Lambda^\mathfrak{p}X)$ are $\mathfrak{p}$-local.

(7) $\Leftrightarrow$ (8) It follows from that $\Lambda^{\mathcal{V}(\mathfrak{p})}$ is a left adjoint of the inclusion $\mathscr{T}^{\mathcal{V}(\mathfrak{p})}\rightarrow\mathscr{T}$ and the object $T_{C/\hspace{-0.1cm}/\mathfrak{p}}(I(\mathfrak{p}))$ is in $\mathscr{T}^{\mathcal{V}(\mathfrak{p})}$.
\end{proof}

The next result recovers part of \cite[Proposition 4.4]{WW}.

\begin{prop}\label{lem:2.6}{\it{Let $X$ be an object in $\mathscr{T}$ and $\mathfrak{p}\in\mathrm{Spec} R$. The following are equivalent:

$\mathrm{(1)}$ $\mathfrak{p}\in\mathrm{cosupp}_R(X)$;

$\mathrm{(2)}$ $V^{\mathcal{Z}(\mathfrak{p})}(X/\hspace{-0.15cm}/\mathfrak{p})\neq0$;

$\mathrm{(3)}$ $\Gamma_{\mathcal{V}(\mathfrak{p})}V^{\mathcal{Z}(\mathfrak{p})}X\neq0$;

$\mathrm{(4)}$ $\mathfrak{p}\in\mathrm{cosupp}_R(V^{\mathcal{Z}(\mathfrak{p})}X)$.}}
\end{prop}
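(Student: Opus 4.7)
The plan is to prove $(2)\Leftrightarrow(3)\Leftrightarrow(1)\Leftrightarrow(4)$ in that order. The three ingredients I would rely on are: the Koszul characterization $\mathrm{H}^\ast_C(\Gamma_{\mathcal{V}(\mathfrak{a})}X)\neq 0\Leftrightarrow \mathrm{H}^\ast_C(X/\hspace{-0.15cm}/\mathfrak{a})\neq 0$ supplied by Proposition \ref{lem:2.2}; the vanishing duality $\Gamma_{\mathcal{V}(\mathfrak{a})}Y\neq 0\Leftrightarrow \Lambda^{\mathcal{V}(\mathfrak{a})}Y\neq 0$ of Corollary \ref{lem:1.100}; and the commutation $V^{\mathcal{Z}(\mathfrak{p})}\Lambda^{\mathcal{V}(\mathfrak{p})}\cong \Lambda^{\mathcal{V}(\mathfrak{p})}V^{\mathcal{Z}(\mathfrak{p})}$, which is the dual of the BIK identity $\Gamma_{\mathcal{V}(\mathfrak{p})}L_{\mathcal{Z}(\mathfrak{p})}\cong L_{\mathcal{Z}(\mathfrak{p})}\Gamma_{\mathcal{V}(\mathfrak{p})}$, together with exactness and idempotence of $V^{\mathcal{Z}(\mathfrak{p})}$.

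For $(2)\Leftrightarrow(3)$, I would first observe that since $V^{\mathcal{Z}(\mathfrak{p})}$ is an exact triangulated functor, applying it to the defining triangle $Y\xrightarrow{r}\Sigma^{|r|}Y\to Y/\hspace{-0.15cm}/r\rightsquigarrow$ for each generator and iterating yields $V^{\mathcal{Z}(\mathfrak{p})}(X/\hspace{-0.15cm}/\mathfrak{p})\cong (V^{\mathcal{Z}(\mathfrak{p})}X)/\hspace{-0.15cm}/\mathfrak{p}$. Applying Proposition \ref{lem:2.2} to the object $V^{\mathcal{Z}(\mathfrak{p})}X$ and the ideal $\mathfrak{p}$ gives $\mathrm{H}^\ast_C(\Gamma_{\mathcal{V}(\mathfrak{p})}V^{\mathcal{Z}(\mathfrak{p})}X)\neq 0\Leftrightarrow \mathrm{H}^\ast_C((V^{\mathcal{Z}(\mathfrak{p})}X)/\hspace{-0.15cm}/\mathfrak{p})\neq 0$ for every $C\in\mathscr{T}^c$; since $\mathscr{T}^c$ generates $\mathscr{T}$, quantifying over $C$ translates this into the equivalence of nonvanishing of the two ambient objects, which is exactly $(3)\Leftrightarrow(2)$.

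For $(1)\Leftrightarrow(3)$, unfolding the definition $\Lambda^\mathfrak{p}X=V^{\mathcal{Z}(\mathfrak{p})}\Lambda^{\mathcal{V}(\mathfrak{p})}X$ and applying the commutation yields $\Lambda^\mathfrak{p}X\cong \Lambda^{\mathcal{V}(\mathfrak{p})}V^{\mathcal{Z}(\mathfrak{p})}X$, whose nonvanishing is equivalent to that of $\Gamma_{\mathcal{V}(\mathfrak{p})}V^{\mathcal{Z}(\mathfrak{p})}X$ by Corollary \ref{lem:1.100}. For $(1)\Leftrightarrow(4)$, the idempotence $V^{\mathcal{Z}(\mathfrak{p})}V^{\mathcal{Z}(\mathfrak{p})}\cong V^{\mathcal{Z}(\mathfrak{p})}$ combined with the same commutation produces $\Lambda^\mathfrak{p}(V^{\mathcal{Z}(\mathfrak{p})}X)=V^{\mathcal{Z}(\mathfrak{p})}\Lambda^{\mathcal{V}(\mathfrak{p})}V^{\mathcal{Z}(\mathfrak{p})}X\cong V^{\mathcal{Z}(\mathfrak{p})}V^{\mathcal{Z}(\mathfrak{p})}\Lambda^{\mathcal{V}(\mathfrak{p})}X\cong \Lambda^\mathfrak{p}X$, so one object vanishes iff the other does, and the two cosupports contain $\mathfrak{p}$ simultaneously.

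The only nontrivial step is securing the commutation $V^{\mathcal{Z}(\mathfrak{p})}\Lambda^{\mathcal{V}(\mathfrak{p})}\cong \Lambda^{\mathcal{V}(\mathfrak{p})}V^{\mathcal{Z}(\mathfrak{p})}$. If one prefers not to invoke \cite{BIK} directly, it can be obtained via a short Yoneda calculation: the adjunctions $(\Gamma_{\mathcal{V}(\mathfrak{p})},\Lambda^{\mathcal{V}(\mathfrak{p})})$ and $(L_{\mathcal{Z}(\mathfrak{p})},V^{\mathcal{Z}(\mathfrak{p})})$ rewrite $\mathrm{Hom}^\ast_\mathscr{T}(C,V^{\mathcal{Z}(\mathfrak{p})}\Lambda^{\mathcal{V}(\mathfrak{p})}X)\cong \mathrm{Hom}^\ast_\mathscr{T}(\Gamma_{\mathcal{V}(\mathfrak{p})}L_{\mathcal{Z}(\mathfrak{p})}C,X)$ and $\mathrm{Hom}^\ast_\mathscr{T}(C,\Lambda^{\mathcal{V}(\mathfrak{p})}V^{\mathcal{Z}(\mathfrak{p})}X)\cong \mathrm{Hom}^\ast_\mathscr{T}(L_{\mathcal{Z}(\mathfrak{p})}\Gamma_{\mathcal{V}(\mathfrak{p})}C,X)$, and these coincide by the already-established commutation $\Gamma_{\mathcal{V}(\mathfrak{p})}L_{\mathcal{Z}(\mathfrak{p})}C\cong L_{\mathcal{Z}(\mathfrak{p})}\Gamma_{\mathcal{V}(\mathfrak{p})}C$ on compact objects. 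Once this is settled, the remaining arguments are a mechanical chain of substitutions.
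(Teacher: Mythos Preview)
Your argument is correct and follows essentially the same route as the paper: the same commutation $V^{\mathcal{Z}(\mathfrak{p})}(X/\hspace{-0.15cm}/\mathfrak{p})\cong (V^{\mathcal{Z}(\mathfrak{p})}X)/\hspace{-0.15cm}/\mathfrak{p}$, the Koszul criterion of Proposition~\ref{lem:2.2}/\ref{lem:2.4}, and Corollary~\ref{lem:1.100} are exactly what the paper uses. The only difference is in $(1)\Leftrightarrow(4)$: the paper cites \cite[Corollary 4.9 and (4.2)]{BIK2} to establish the stronger identity $\mathrm{cosupp}_R(V^{\mathcal{Z}(\mathfrak{p})}X)=\mathcal{U}(\mathfrak{p})\cap\mathrm{cosupp}_RX$, whereas your idempotence-plus-commutation argument $\Lambda^\mathfrak{p}(V^{\mathcal{Z}(\mathfrak{p})}X)\cong\Lambda^\mathfrak{p}X$ is a cleaner direct route to the bare equivalence (though it forgoes that extra piece of information). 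One cosmetic remark: your Yoneda verification of $V^{\mathcal{Z}(\mathfrak{p})}\Lambda^{\mathcal{V}(\mathfrak{p})}\cong\Lambda^{\mathcal{V}(\mathfrak{p})}V^{\mathcal{Z}(\mathfrak{p})}$ works for all test objects, not just compact ones, so there is no need to restrict to $C\in\mathscr{T}^c$ in that step.
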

\begin{proof} (1) $\Leftrightarrow$ (2) $\Lambda^{\mathcal{V}(\mathfrak{p})}V^{\mathcal{Z}(\mathfrak{p})}X\neq0$ if and only if $\mathrm{H}^\ast_C(\Lambda^{\mathcal{V}(\mathfrak{p})}V^{\mathcal{Z}(\mathfrak{p})}X)\neq0$ for some $C\in\mathscr{T}^c$ if and only if $\mathrm{H}^\ast_{C}((V^{\mathcal{Z}(\mathfrak{p})}X)/\hspace{-0.15cm}/\mathfrak{p})\neq0$ for some $C\in\mathscr{T}^c$ by Proposition \ref{lem:2.4} if and only if $V^{\mathcal{Z}(\mathfrak{p})}(X/\hspace{-0.15cm}/\mathfrak{p})\neq0$ since $(V^{\mathcal{Z}(\mathfrak{p})}X)/\hspace{-0.15cm}/\mathfrak{p}\cong V^{\mathcal{Z}(\mathfrak{p})}(X/\hspace{-0.15cm}/\mathfrak{p})$.

(1) $\Leftrightarrow$ (3) This follows from Corollary \ref{lem:1.100}.

(1) $\Leftrightarrow$ (4) By \cite[Corollary 4.9]{BIK2}, $\mathrm{cosupp}_R(V^{\mathcal{Z}(\mathfrak{p})}X)\subseteq\mathcal{U}(\mathfrak{p})$. Fix a prime $\mathfrak{q}\in\mathrm{Spec}R$ with $\mathfrak{q}\subseteq\mathfrak{p}$. Then $\Lambda^\mathfrak{q}(V^{\mathcal{Z}(\mathfrak{p})}X)\cong\Lambda^\mathfrak{q}X$ by \cite[(4.2)]{BIK2}, and hence $\mathrm{cosupp}_R(V^{\mathcal{Z}(\mathfrak{p})}X)=\mathcal{U}(\mathfrak{p})\cap\mathrm{cosupp}_RX$. This means that (1) $\Leftrightarrow$ (4) holds.
\end{proof}

Analogous to \cite[Theorem 5.2]{BIK}, we give the following computation of cosupport.

\begin{thm}\label{lem:4.40}{\it{For each object $X$ in $\mathscr{T}$, one has an equality
 \begin{center}$\mathrm{cosupp}_{R}X=\bigcup_{C\in\mathscr{T}^c,\ \mathfrak{p}\in\mathrm{Spec}R}\mathrm{cosupp}_R\mathrm{Hom}^\ast_\mathscr{T}(V^{\mathcal{Z}(\mathfrak{p})}X,T_{C/\hspace{-0.1cm}/\mathfrak{p}}(I(\mathfrak{p})))$.\end{center}}}
\end{thm}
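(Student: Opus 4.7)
The plan is to deduce the equality from the equivalence $(1)\Leftrightarrow(8)$ of Proposition~\ref{lem:2.13} together with a Matlis-type vanishing for the Hom-modules on the right. Throughout, abbreviate $M_{C,\mathfrak{q}}:=\mathrm{Hom}^\ast_\mathscr{T}(V^{\mathcal{Z}(\mathfrak{q})}X,T_{C/\hspace{-0.1cm}/\mathfrak{q}}(I(\mathfrak{q})))$.

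First, compact generation of $\mathscr{T}$ combined with Proposition~\ref{lem:2.13}$(1)\Leftrightarrow(8)$ yields
\[\mathrm{cosupp}_R X=\{\mathfrak{p}\in\mathrm{Spec}R\mid M_{C,\mathfrak{p}}\neq 0\ \text{for some}\ C\in\mathscr{T}^c\}.\]
So the theorem reduces to matching this set with the union $\bigcup_{C,\mathfrak{q}}\mathrm{cosupp}_R M_{C,\mathfrak{q}}$.

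The crucial step is the claim that for every $C\in\mathscr{T}^c$ and every $\mathfrak{q}\in\mathrm{Spec}R$, $\mathrm{cosupp}_R M_{C,\mathfrak{q}}\subseteq\{\mathfrak{q}\}$. By the defining property of $T_{C/\hspace{-0.1cm}/\mathfrak{q}}(I(\mathfrak{q}))$ one has $M_{C,\mathfrak{q}}\cong \mathrm{Hom}^\ast_R(\mathrm{H}^\ast_{C/\hspace{-0.1cm}/\mathfrak{q}}(V^{\mathcal{Z}(\mathfrak{q})}X),I(\mathfrak{q}))$. Applying Proposition~\ref{lem:2.13} in $\mathrm{D}(R)$ to the compact test object $R_\mathfrak{r}/\hspace{-0.1cm}/\mathfrak{r}$ and using Hom--tensor adjunction together with injectivity of $I(\mathfrak{q})$, the question reduces to whether $\mathrm{Hom}_R(\kappa(\mathfrak{r}),I(\mathfrak{q}))$ vanishes for $\mathfrak{r}\neq\mathfrak{q}$, because the homology of $R_\mathfrak{r}/\hspace{-0.1cm}/\mathfrak{r}\lotimes_R(-)$ consists of $\kappa(\mathfrak{r})$-vector spaces. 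For $\mathfrak{r}\nsubseteq\mathfrak{q}$, any homogeneous $s\in\mathfrak{r}\setminus\mathfrak{q}$ kills $\kappa(\mathfrak{r})$ but is a unit on the $R_\mathfrak{q}$-module $I(\mathfrak{q})$; for $\mathfrak{r}\subsetneq\mathfrak{q}$, any $s\in\mathfrak{q}\setminus\mathfrak{r}$ is a unit on $\kappa(\mathfrak{r})$ while every element of $I(\mathfrak{q})$, being $\mathfrak{q}$-torsion, is annihilated by some power of $s$. Either way only the zero homomorphism survives, proving the claim.

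The two inclusions then follow formally. If $\mathfrak{p}\in\mathrm{cosupp}_R M_{C,\mathfrak{q}}$, the claim forces $\mathfrak{p}=\mathfrak{q}$ and $M_{C,\mathfrak{p}}\neq 0$, so $\mathfrak{p}\in\mathrm{cosupp}_R X$ by the displayed equality. Conversely, if $\mathfrak{p}\in\mathrm{cosupp}_R X$ then some $M_{C,\mathfrak{p}}\neq 0$; its cosupport is a nonempty subset of $\{\mathfrak{p}\}$, hence equals $\{\mathfrak{p}\}$, and $\mathfrak{p}$ appears in the right-hand union. The main obstacle is the Matlis-type vanishing of the middle paragraph; the rest is formal manipulation of Proposition~\ref{lem:2.13}.
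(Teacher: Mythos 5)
Your argument is correct, and at the level of structure it follows the same skeleton as the paper: translate $\mathfrak{p}\in\mathrm{cosupp}_R X$ into the nonvanishing of some $M_{C,\mathfrak{p}}=\mathrm{Hom}^\ast_\mathscr{T}(V^{\mathcal{Z}(\mathfrak{p})}X,T_{C/\hspace{-0.1cm}/\mathfrak{p}}(I(\mathfrak{p})))$ via Proposition~\ref{lem:2.13} and compact generation, and then show that $\mathrm{cosupp}_R M_{C,\mathfrak{q}}$ is either empty or equal to $\{\mathfrak{q}\}$, after which both inclusions are formal. The difference is how that central claim is obtained. The paper cites the cosupport formula $\mathrm{cosupp}_R\mathrm{Hom}^\ast_R(N,I(\mathfrak{p}))=\mathrm{supp}_R N\cap\mathrm{cosupp}_R(\,\cdot\,)$ from Sather-Wagstaff and Wicklein, combined with the $\mathfrak{p}$-local $\mathfrak{p}$-torsion structure of $N=\mathrm{H}^\ast_{C/\hspace{-0.1cm}/\mathfrak{p}}(V^{\mathcal{Z}(\mathfrak{p})}X)$, to get the set $\{\mathfrak{p}\}$ immediately. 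You instead unwind $M_{C,\mathfrak{q}}$ as $\mathrm{Hom}^\ast_R(N,I(\mathfrak{q}))$ and prove the containment $\mathrm{cosupp}_R M_{C,\mathfrak{q}}\subseteq\{\mathfrak{q}\}$ directly: detect $\mathfrak{r}\in\mathrm{cosupp}_R$ via $\mathrm{RHom}_R(R_\mathfrak{r}/\hspace{-0.1cm}/\mathfrak{r},-)$, use Hom--tensor adjunction and injectivity of $I(\mathfrak{q})$ to move the Koszul object across, observe that the resulting homology lives over $\kappa(\mathfrak{r})$, and then use the Matlis-type vanishing $\mathrm{Hom}_R(\kappa(\mathfrak{r}),I(\mathfrak{q}))=0$ for $\mathfrak{r}\neq\mathfrak{q}$. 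That vanishing argument is correct and is exactly the content that [WW, Proposition~4.10] packages, so your route is more self-contained at the cost of reproving a known fact. One small wording caution: $R_\mathfrak{r}/\hspace{-0.1cm}/\mathfrak{r}$ is not a compact object of $\mathrm{D}(R)$ (it is a finite complex over $R_\mathfrak{r}$, not a perfect $R$-complex); what you are really invoking is that $\mathfrak{r}\in\mathrm{cosupp}_R M$ if and only if $\mathrm{Hom}^\ast_{\mathrm{D}(R)}(R_\mathfrak{r}/\hspace{-0.1cm}/\mathfrak{r},M)\neq0$, which is Proposition~\ref{lem:2.13}(1)$\Leftrightarrow$(5) applied to the single compact generator $R$ of $\mathrm{D}(R)$, not compactness of $R_\mathfrak{r}/\hspace{-0.1cm}/\mathfrak{r}$ itself. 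With that rephrasing the argument is airtight.
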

\begin{proof} By \cite[Theorem 4.5 and Proposition 4.4]{BIK2}, $X=0$ if and only if $\mathrm{H}^\ast_{C/\hspace{-0.1cm}/\mathfrak{p}}(V^{\mathcal{Z}(\mathfrak{p})}X)\cong\mathrm{Hom}^\ast_\mathscr{T}(C(\mathfrak{p}),X)=0$ for all $C\in\mathscr{T}^c$ and $\mathfrak{p}\in\mathrm{Spec}R$ if and only if $\mathrm{Hom}^\ast_\mathscr{T}(V^{\mathcal{Z}(\mathfrak{p})}X,T_{C/\hspace{-0.1cm}/\mathfrak{p}}(I(\mathfrak{p})))=0$ for all $C\in\mathscr{T}^c$ and $\mathfrak{p}\in\mathrm{Spec}R$. So assume that $X\neq0$.
 Let $\mathfrak{p}$ be a point in $\mathrm{Spec}R$ such that $\Lambda^\mathfrak{p}X\neq0$. Then there is $C\in\mathscr{T}^c$ such that $\mathrm{Hom}^\ast_\mathscr{T}(V^{\mathcal{Z}(\mathfrak{p})}X,T_{C/\hspace{-0.1cm}/\mathfrak{p}}(I(\mathfrak{p})))\neq0$ by Proposition \ref{lem:2.13}. Hence \cite[Proposition 4.10]{WW}  implies that $\mathrm{cosupp}_R\mathrm{Hom}^\ast_\mathscr{T}(V^{\mathcal{Z}(\mathfrak{p})}X,T_{C/\hspace{-0.1cm}/\mathfrak{p}}(I(\mathfrak{p})))=
\mathrm{supp}_R\mathrm{H}^\ast_{C/\hspace{-0.1cm}/\mathfrak{p}}(V^{\mathcal{Z}(\mathfrak{p})}X)\cap\mathrm{cosupp}_RT_{C/\hspace{-0.1cm}/\mathfrak{p}}(I(\mathfrak{p}))=\{\mathfrak{p}\}$. This justifies the inclusion $\mathrm{cosupp}_{R}X\subseteq\bigcup_{C\in\mathscr{T}^c,\ \mathfrak{p}\in\mathrm{Spec}R}\mathrm{cosupp}_R\mathrm{Hom}^\ast_\mathscr{T}(V^{\mathcal{Z}(\mathfrak{p})}X,T_{C/\hspace{-0.1cm}/\mathfrak{p}}(I(\mathfrak{p})))$.
Let now $\mathfrak{p}\in\mathrm{Spec}R$. Then there is an object $C$ in $\mathscr{T}^c$ so that $\mathrm{cosupp}_R\mathrm{Hom}^\ast_\mathscr{T}(V^{\mathcal{Z}(\mathfrak{p})}X,T_{C/\hspace{-0.1cm}/\mathfrak{p}}(I(\mathfrak{p})))=
\{\mathfrak{p}\}$, and so $\mathrm{Hom}^\ast_\mathscr{T}(C(\mathfrak{p}),X)\neq0$. Consequently, $\mathfrak{p}\in\mathrm{cosupp}_RX$ by \cite[Proposition 4.4]{BIK2}. This completes the proof.
\end{proof}

\begin{cor}\label{lem:1.10}{\it{For any specialization closed subset
$\mathcal{V}$ of $\mathrm{Spec}R$, one has that
\begin{center}$\begin{aligned}\mathscr{T}^\mathcal{V}
&=\{X\in\mathscr{T}\hspace{0.03cm}|\hspace{0.03cm}\mathrm{cosupp}_RX\subseteq\mathcal{V}\}\\
&=\{X\in\mathscr{T}\hspace{0.03cm}|\hspace{0.03cm}\mathrm{cosupp}_R\mathrm{Hom}^\ast_\mathscr{T}(V^{\mathcal{Z}(\mathfrak{p})}X,T_{C}(I(\mathfrak{p})))\subseteq\mathcal{V},\ \forall\ C\in\mathscr{T}^c,\ \forall\ \mathfrak{p}\in\mathcal{V}\}.\end{aligned}$\end{center}}}
\end{cor}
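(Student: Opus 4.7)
The plan is to prove the two displayed equalities in order, using Theorem \ref{lem:4.40} and Proposition \ref{lem:2.13} as the main engines together with results cited from \cite{BIK2}.

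For the first equality $\mathscr{T}^\mathcal{V} = \{X \mid \mathrm{cosupp}_R X \subseteq \mathcal{V}\}$, I would argue via the defining triangle $V^\mathcal{V} X \to X \to \Lambda^\mathcal{V} X$. Since $\mathscr{T}^\mathcal{V}$ is the essential image of $\Lambda^\mathcal{V}$, an object $X$ lies in $\mathscr{T}^\mathcal{V}$ iff the unit map $X \to \Lambda^\mathcal{V} X$ is an isomorphism, equivalently iff $V^\mathcal{V} X = 0$. It therefore suffices to show $V^\mathcal{V} X = 0 \Leftrightarrow \mathrm{cosupp}_R X \subseteq \mathcal{V}$. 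Applying the local functors $\Lambda^\mathfrak{p}$ to each vertex of the triangle and invoking the results from \cite{BIK2} cited earlier in the section, one obtains $\mathrm{cosupp}_R(V^\mathcal{V} X) = \mathrm{cosupp}_R X \setminus \mathcal{V}$. Combined with the detection principle ``$Y = 0$ iff $\mathrm{cosupp}_R Y = \emptyset$'' (invoked at the top of the proof of Theorem \ref{lem:4.40}), this yields the claim.

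For the second equality, the key input is Theorem \ref{lem:4.40} itself, which writes
\[
\mathrm{cosupp}_R X = \bigcup_{C \in \mathscr{T}^c,\ \mathfrak{p} \in \mathrm{Spec}R} \mathrm{cosupp}_R \mathrm{Hom}^\ast_\mathscr{T}\bigl(V^{\mathcal{Z}(\mathfrak{p})} X, T_{C/\hspace{-0.15cm}/\mathfrak{p}}(I(\mathfrak{p}))\bigr),
\]
together with the observation from its proof that each term on the right is either empty or equal to $\{\mathfrak{p}\}$. Thus $\mathrm{cosupp}_R X \subseteq \mathcal{V}$ holds iff every such term is contained in $\mathcal{V}$: for $\mathfrak{p} \in \mathcal{V}$ this is automatic, while for $\mathfrak{p} \notin \mathcal{V}$ it is equivalent to the vanishing of the corresponding Hom module. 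The translation between the test objects $T_{C/\hspace{-0.15cm}/\mathfrak{p}}(I(\mathfrak{p}))$ appearing in Theorem \ref{lem:4.40} and $T_C(I(\mathfrak{p}))$ appearing in the corollary is supplied by the equivalences $(6) \Leftrightarrow (7) \Leftrightarrow (8)$ of Proposition \ref{lem:2.13}, which guarantee that the two Hom modules share the same vanishing pattern at each $\mathfrak{p}$.

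The main obstacle I anticipate is the compatibility step in the first equality, namely tracking cosupport through the triangle $V^\mathcal{V} X \to X \to \Lambda^\mathcal{V} X$ to obtain $\mathrm{cosupp}_R(V^\mathcal{V} X) = \mathrm{cosupp}_R X \setminus \mathcal{V}$; this requires a careful invocation of the interaction between $\Lambda^\mathfrak{p}$ and the functors $V^\mathcal{V}$, $\Lambda^\mathcal{V}$ from \cite{BIK2}. Once that is established, both equalities reduce to unwinding Theorem \ref{lem:4.40} and Proposition \ref{lem:2.13}.
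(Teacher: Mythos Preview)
Your treatment of the first equality is fine; the paper simply takes $\mathscr{T}^\mathcal{V}=\{X\mid\mathrm{cosupp}_RX\subseteq\mathcal{V}\}$ as known from \cite[Corollary~4.8]{BIK2}, so your explicit argument via $\mathrm{cosupp}_R(V^\mathcal{V}X)=\mathrm{cosupp}_RX\setminus\mathcal{V}$ is compatible and just fills in that citation.

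For the second equality, however, there is a genuine gap. Your plan rests on the observation that each term in Theorem~\ref{lem:4.40} has cosupport either $\emptyset$ or $\{\mathfrak{p}\}$, and then on swapping $T_{C/\hspace{-0.1cm}/\mathfrak{p}}(I(\mathfrak{p}))$ for $T_C(I(\mathfrak{p}))$ via Proposition~\ref{lem:2.13}. But this swap does not go through: once the target is $T_C(I(\mathfrak{p}))$ rather than $T_{C/\hspace{-0.1cm}/\mathfrak{p}}(I(\mathfrak{p}))$, the cosupport of $\mathrm{Hom}^\ast_\mathscr{T}(V^{\mathcal{Z}(\mathfrak{p})}X,T_C(I(\mathfrak{p})))$ is no longer concentrated at $\mathfrak{p}$; it is an arbitrary subset of $\mathcal{U}(\mathfrak{p})$. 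Hence the condition ``$\mathrm{cosupp}_R\mathrm{Hom}^\ast_\mathscr{T}(V^{\mathcal{Z}(\mathfrak{p})}X,T_C(I(\mathfrak{p})))\subseteq\mathcal{V}$'' in the corollary is \emph{not} a vanishing condition, and Proposition~\ref{lem:2.13} (which records only non-vanishing of the entire Hom module, and in which the combination $V^{\mathcal{Z}(\mathfrak{p})}X$ with $T_C(I(\mathfrak{p}))$ does not even appear) cannot bridge the two. Your analysis also ends with a condition indexed by $\mathfrak{p}\notin\mathcal{V}$, whereas the stated third set is indexed by $\mathfrak{p}\in\mathcal{V}$; you do not explain how these match.

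The paper handles this differently. It first identifies
\[
\mathrm{cosupp}_R\mathrm{Hom}^\ast_\mathscr{T}\bigl(V^{\mathcal{Z}(\mathfrak{p})}X,\,T_C(I(\mathfrak{p}))\bigr)=\mathrm{supp}_R\mathrm{H}^\ast_C\bigl(V^{\mathcal{Z}(\mathfrak{p})}X\bigr),
\]
using the defining isomorphism $\mathrm{Hom}^\ast_\mathscr{T}(-,T_C(I(\mathfrak{p})))\cong\mathrm{Hom}^\ast_R(\mathrm{H}^\ast_C(-),I(\mathfrak{p}))$ together with the support--cosupport duality for $R$-modules. It then invokes \cite[Theorem~5.13]{BIK}, which for a specialization closed $\mathcal{V}$ lets one pass between the containments $\mathrm{supp}_R\mathrm{H}^\ast_C(-)\subseteq\mathcal{V}$ and $\mathrm{supp}_R\mathrm{H}^\ast_{C/\hspace{-0.1cm}/\mathfrak{p}}(-)\subseteq\mathcal{V}$, thereby linking back to Theorem~\ref{lem:4.40}. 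This identification-plus-\cite[Theorem~5.13]{BIK} step is the missing ingredient in your proposal.
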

\begin{proof} By Theorem \ref{lem:4.40}, $\mathrm{cosupp}_RX\subseteq\mathcal{V}$ if and only if $\mathrm{supp}_R\mathrm{H}^\ast_{C/\hspace{-0.1cm}/\mathfrak{p}}(V^{\mathcal{Z}(\mathfrak{p})}X)\subseteq\mathcal{V}$ for all $C\in\mathscr{T}^c$ and $\mathfrak{p}\in\mathrm{Spec}R$. Hence \cite[Theorem 5.13]{BIK} implies that $\mathrm{supp}_R\mathrm{H}^\ast_{C}(V^{\mathcal{Z}(\mathfrak{p})}X)\subseteq\mathcal{V}$ for all $C\in\mathscr{T}^c$  and $\mathfrak{p}\in\mathrm{Spec}R$.
 But $\mathrm{cosupp}_R\mathrm{Hom}^\ast_\mathscr{T}(V^{\mathcal{Z}(\mathfrak{p})}X,T_{C}(I(\mathfrak{p})))=\mathrm{supp}_R\mathrm{H}^\ast_{C}(V^{\mathcal{Z}(\mathfrak{p})}X)$ for all $C\in\mathscr{T}^c$  and $\mathfrak{p}\in\mathrm{Spec}R$, so the proof is complete.
\end{proof}

\begin{cor}\label{lem:5.15}{\it{For each specialization closed subset $\mathcal{V}\subseteq\mathrm{Spec}R$ and each object $X$ in $\mathscr{T}$, one has that
\begin{center}$\mathrm{cosupp}_{R}(\Gamma_{\mathcal{V}}X)\cap\mathcal{V}=\mathrm{cosupp}_{R}X\cap\mathcal{V}$.\end{center}In particular, $\Gamma_{\mathcal{V}}X=0$ if and only if $\mathrm{cosupp}_{R}X\cap\mathcal{V}=\emptyset$.}}
\end{cor}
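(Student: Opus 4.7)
The plan is to read the main equality off the Koszul-object characterization of cosupport (Proposition \ref{lem:2.13}) through the adjunction with $\Gamma_\mathcal{V}$, and then bootstrap to the ``in particular'' consequence.

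For the main equality, I would fix $\mathfrak{p}\in\mathcal{V}$ and $C\in\mathscr{T}^c$, and exploit the hypothesis that $\mathcal{V}$ is specialization closed: $\mathfrak{p}\in\mathcal{V}$ forces $\mathcal{V}(\mathfrak{p})\subseteq\mathcal{V}$, so the Koszul object $C(\mathfrak{p})=C_\mathfrak{p}/\hspace{-0.15cm}/\mathfrak{p}$, which is always in $\mathscr{T}_{\mathcal{V}(\mathfrak{p})}$, automatically lies in $\mathscr{T}_\mathcal{V}$. The right-adjointness of $\Gamma_\mathcal{V}$ to the inclusion $\mathscr{T}_\mathcal{V}\hookrightarrow\mathscr{T}$ then supplies a natural isomorphism
$$\mathrm{Hom}^\ast_\mathscr{T}(C(\mathfrak{p}),X)\cong\mathrm{Hom}^\ast_\mathscr{T}(C(\mathfrak{p}),\Gamma_\mathcal{V} X),$$
and Proposition \ref{lem:2.13} (equivalence (1)$\Leftrightarrow$(5)) applied on both sides yields $\mathfrak{p}\in\mathrm{cosupp}_R X \Leftrightarrow \mathfrak{p}\in\mathrm{cosupp}_R(\Gamma_\mathcal{V} X)$ for every $\mathfrak{p}\in\mathcal{V}$, which is exactly the displayed equality.

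For the ``in particular'' statement, the forward implication is immediate: $\Gamma_\mathcal{V} X=0$ forces $\mathrm{cosupp}_R(\Gamma_\mathcal{V} X)=\emptyset$, and the main equality then gives $\mathrm{cosupp}_R X\cap\mathcal{V}=\emptyset$. For the converse, I would combine the main equality with the inclusion $\mathrm{cosupp}_R(\Gamma_\mathcal{V} X)\subseteq\mathcal{V}$: these together give $\mathrm{cosupp}_R(\Gamma_\mathcal{V} X)=\mathrm{cosupp}_R X\cap\mathcal{V}=\emptyset$, and then the criterion $Y=0\Leftrightarrow\mathrm{cosupp}_R Y=\emptyset$, used in the proof of Theorem \ref{lem:4.40} via \cite[Theorem 4.5, Proposition 4.4]{BIK2}, forces $\Gamma_\mathcal{V} X=0$.

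The delicate step is the inclusion $\mathrm{cosupp}_R(\Gamma_\mathcal{V} X)\subseteq\mathcal{V}$. Since $\Gamma_\mathcal{V} X\in\mathscr{T}_\mathcal{V}$, and for $\mathfrak{p}\notin\mathcal{V}$ specialization closedness gives $\mathcal{U}(\mathfrak{p})\cap\mathcal{V}=\emptyset$ and hence $\mathcal{V}\subseteq\mathcal{Z}(\mathfrak{p})$, the task reduces to checking that $\Lambda^\mathfrak{p}=V^{\mathcal{Z}(\mathfrak{p})}\Lambda^{\mathcal{V}(\mathfrak{p})}$ vanishes on $\mathscr{T}_{\mathcal{Z}(\mathfrak{p})}$. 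This is the natural dual of \cite[Corollary 4.9]{BIK2} and follows from $V^{\mathcal{Z}(\mathfrak{p})}$ annihilating $\mathcal{Z}(\mathfrak{p})$-torsion objects; this is the main obstacle that needs separate verification before the chain $\mathrm{cosupp}_R(\Gamma_\mathcal{V} X)=\mathrm{cosupp}_R(\Gamma_\mathcal{V} X)\cap\mathcal{V}=\mathrm{cosupp}_R X\cap\mathcal{V}$ closes the proof.
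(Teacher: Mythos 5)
Your argument for the main equality is correct and, modulo packaging, is the same as the paper's: the paper routes the calculation through Theorem~\ref{lem:4.40}, while you apply Proposition~\ref{lem:2.13}((1)$\Leftrightarrow$(5)) pointwise, but both hinge on the same two facts --- that $C(\mathfrak{p})\in\mathscr{T}_{\mathcal{V}(\mathfrak{p})}\subseteq\mathscr{T}_\mathcal{V}$ for $\mathfrak{p}\in\mathcal{V}$ specialization closed, and that $\Gamma_\mathcal{V}$ is right adjoint to the inclusion $\mathscr{T}_\mathcal{V}\hookrightarrow\mathscr{T}$, giving $\mathrm{Hom}^\ast_\mathscr{T}(C(\mathfrak{p}),X)\cong\mathrm{Hom}^\ast_\mathscr{T}(C(\mathfrak{p}),\Gamma_\mathcal{V}X)$.

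The converse direction of the ``in particular'' clause, however, rests on the false inclusion $\mathrm{cosupp}_R(\Gamma_\mathcal{V}X)\subseteq\mathcal{V}$. It is $\Lambda^\mathcal{V}$, not $\Gamma_\mathcal{V}$, whose image is characterized by cosupport lying in $\mathcal{V}$ (Corollary~\ref{lem:1.10}); the image of $\Gamma_\mathcal{V}$ is characterized by \emph{support} in $\mathcal{V}$, and there is no corresponding bound on the cosupport. Concretely: take $\mathscr{T}=\mathrm{D}(R)$ with $R$ a complete discrete valuation ring with maximal ideal $\mathfrak{m}$, and $\mathcal{V}=\mathcal{V}(\mathfrak{m})=\{\mathfrak{m}\}$. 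Then $\Gamma_{\mathcal{V}}R\simeq I(\mathfrak{m})[-1]$, and Remark~\ref{lem:0.0}(4) of this very paper records that $\mathrm{cosupp}_R I(\mathfrak{m})=\mathrm{Spec}R\not\subseteq\{\mathfrak{m}\}$. Your subsidiary claim that $\Lambda^\mathfrak{p}$ vanishes on $\mathscr{T}_{\mathcal{Z}(\mathfrak{p})}$ (equivalently, that $V^{\mathcal{Z}(\mathfrak{p})}$ annihilates $\mathcal{Z}(\mathfrak{p})$-torsion objects) fails for the same reason; that is not the dual of \cite[Corollary 4.9]{BIK2}. So the chain $\mathrm{cosupp}_R(\Gamma_\mathcal{V}X)=\mathrm{cosupp}_R(\Gamma_\mathcal{V}X)\cap\mathcal{V}$ does not hold, and the argument collapses. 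The paper avoids this entirely: from $\mathrm{cosupp}_RX\cap\mathcal{V}=\emptyset$, one gets $\mathrm{cosupp}_RX\subseteq\mathrm{Spec}R\setminus\mathcal{V}$, hence $X\cong L_\mathcal{V}X$ by \cite[Corollary~4.9]{BIK2}, and then $\Gamma_\mathcal{V}X\cong\Gamma_\mathcal{V}L_\mathcal{V}X=0$. You should replace your converse argument with this one.
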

\begin{proof} By Theorem \ref{lem:4.40}, we have the following equalities
\begin{center}$\begin{aligned}\mathrm{cosupp}_{R}(\Gamma_{\mathcal{V}}X)\cap\mathcal{V}
&=\bigcup_{C\in\mathscr{G},\mathfrak{p}\in\mathcal{V}}\mathrm{cosupp}_R\mathrm{Hom}^\ast_\mathscr{T}(V^{\mathcal{Z}(\mathfrak{p})}\Gamma_{\mathcal{V}}X,T_{C/\hspace{-0.1cm}/\mathfrak{p}}(I(\mathfrak{p})))\\
&=\bigcup_{C\in\mathscr{G},\mathfrak{p}\in\mathcal{V}}\mathrm{cosupp}_R\mathrm{Hom}^\ast_R(\mathrm{Hom}^\ast_\mathscr{T}(C(\mathfrak{p}),\Gamma_{\mathcal{V}}X),I(\mathfrak{p}))\\
&=\bigcup_{C\in\mathscr{G},\mathfrak{p}\in\mathcal{V}}\mathrm{cosupp}_R\mathrm{Hom}^\ast_R(\mathrm{Hom}^\ast_\mathscr{T}(C/\hspace{-0.15cm}/\mathfrak{p},V^{\mathcal{Z}(\mathfrak{p})}X),I(\mathfrak{p}))\\
&=\mathrm{cosupp}_{R}X\cap\mathcal{V}.\end{aligned}$\end{center}We obtain the equality we seek.

If $\Gamma_{\mathcal{V}}X=0$, then $\mathrm{cosupp}_{R}X\cap\mathcal{V}=\emptyset$. Assume that $\mathrm{cosupp}_{R}X\cap\mathcal{V}=\emptyset$. Then $\mathrm{cosupp}_{R}X\subseteq\mathrm{Spec}R\backslash\mathcal{V}$, and so $X\cong L_{\mathcal{V}}X$ by \cite[Corollary 4.9]{BIK2}. Consequently, $\Gamma_{\mathcal{V}}X\cong\Gamma_{\mathcal{V}}L_{\mathcal{V}}X=0$.
\end{proof}

\begin{prop}\label{lem:2.12}{\it{For any $\mathfrak{p}\in\mathrm{Spec}R$ and any object $X$ in $\mathscr{T}$, one has an exact triangle
 \begin{center}$X'\rightarrow X\rightarrow X''\rightsquigarrow$\end{center} where $X'\in{^\bot}(\mathscr{T}^{\{\mathfrak{p}\}})$ and $X''\in\mathscr{T}^{\{\mathfrak{p}\}}$.}}
\end{prop}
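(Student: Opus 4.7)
The natural candidate for the right-hand object of the triangle is $X'':=\Lambda^{\mathfrak{p}}X$, which belongs to $\mathscr{T}^{\{\mathfrak{p}\}}$ by definition. The plan is to realize $\Lambda^{\mathfrak{p}}$ as a localization functor on $\mathscr{T}$ (equivalently, as a left adjoint to the inclusion $\mathscr{T}^{\{\mathfrak{p}\}}\hookrightarrow\mathscr{T}$): once a functorial unit morphism $\eta_X\colon X\to\Lambda^{\mathfrak{p}}X$ is in hand, completing it to an exact triangle
\[
X'\to X\xrightarrow{\eta_X}\Lambda^{\mathfrak{p}}X\rightsquigarrow
\]
supplies the required decomposition, and the orthogonality $X'\in{^\bot}(\mathscr{T}^{\{\mathfrak{p}\}})$ follows formally from the universal property of $\eta_X$.

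The first task is to manufacture $\eta\colon\mathrm{Id}_{\mathscr{T}}\to\Lambda^{\mathfrak{p}}$. The adjoint pair $(\Gamma_{\mathfrak{p}},\Lambda^{\mathfrak{p}})$ from \cite{BIK2} comes with an abstract unit $\mathrm{Id}\to\Lambda^{\mathfrak{p}}\Gamma_{\mathfrak{p}}$, so it suffices to produce a natural isomorphism $\Lambda^{\mathfrak{p}}\Gamma_{\mathfrak{p}}\cong\Lambda^{\mathfrak{p}}$. Using the factorizations $\Gamma_{\mathfrak{p}}=\Gamma_{\mathcal{V}(\mathfrak{p})}L_{\mathcal{Z}(\mathfrak{p})}$ and $\Lambda^{\mathfrak{p}}=V^{\mathcal{Z}(\mathfrak{p})}\Lambda^{\mathcal{V}(\mathfrak{p})}$, the pairwise commutations of these four operators, and the idempotency-type identities $\Lambda^{\mathcal{V}(\mathfrak{p})}\Gamma_{\mathcal{V}(\mathfrak{p})}\cong\Lambda^{\mathcal{V}(\mathfrak{p})}$ and $V^{\mathcal{Z}(\mathfrak{p})}L_{\mathcal{Z}(\mathfrak{p})}\cong V^{\mathcal{Z}(\mathfrak{p})}$ recorded in \cite{BIK,BIK1}, one telescopes $\Lambda^{\mathfrak{p}}\Gamma_{\mathfrak{p}}=V^{\mathcal{Z}(\mathfrak{p})}\Lambda^{\mathcal{V}(\mathfrak{p})}\Gamma_{\mathcal{V}(\mathfrak{p})}L_{\mathcal{Z}(\mathfrak{p})}$ down to $V^{\mathcal{Z}(\mathfrak{p})}\Lambda^{\mathcal{V}(\mathfrak{p})}=\Lambda^{\mathfrak{p}}$, giving the desired $\eta$.

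With $\eta_X$ in place and the triangle formed, the orthogonality is checked by applying $\mathrm{Hom}^{\ast}_{\mathscr{T}}(-,Y)$ for an arbitrary $Y\cong\Lambda^{\mathfrak{p}}Z\in\mathscr{T}^{\{\mathfrak{p}\}}$. By the $(\Gamma_{\mathfrak{p}},\Lambda^{\mathfrak{p}})$-adjunction together with the companion identity $\Gamma_{\mathfrak{p}}\Lambda^{\mathfrak{p}}\cong\Gamma_{\mathfrak{p}}$ (derived by the same telescoping argument), both $\mathrm{Hom}^{\ast}_{\mathscr{T}}(X,Y)$ and $\mathrm{Hom}^{\ast}_{\mathscr{T}}(\Lambda^{\mathfrak{p}}X,Y)$ identify with $\mathrm{Hom}^{\ast}_{\mathscr{T}}(\Gamma_{\mathfrak{p}}X,Z)$, and naturality shows that $\eta_X^{\ast}$ is an isomorphism on these groups. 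The long exact sequence obtained from $\mathrm{Hom}^{\ast}_{\mathscr{T}}(-,Y)$ applied to the triangle then forces $\mathrm{Hom}^{\ast}_{\mathscr{T}}(X',Y)=0$, as required.

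The main obstacle is pinning down the functorial identities $\Lambda^{\mathfrak{p}}\Gamma_{\mathfrak{p}}\cong\Lambda^{\mathfrak{p}}$ and $\Gamma_{\mathfrak{p}}\Lambda^{\mathfrak{p}}\cong\Gamma_{\mathfrak{p}}$, since this demands careful bookkeeping of which pairs among $\Gamma_{\mathcal{V}(\mathfrak{p})},L_{\mathcal{Z}(\mathfrak{p})},\Lambda^{\mathcal{V}(\mathfrak{p})},V^{\mathcal{Z}(\mathfrak{p})}$ commute and which compose to the identity on the relevant images; all the ingredients are in \cite{BIK,BIK1,BIK2}, but assembling them correctly is where the technical work lies. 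Once these identities are secured, the remainder is pure adjunction formalism.
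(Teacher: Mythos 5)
The proposal has a genuine gap in the telescoping step. You invoke the identity $V^{\mathcal{Z}(\mathfrak{p})}L_{\mathcal{Z}(\mathfrak{p})}\cong V^{\mathcal{Z}(\mathfrak{p})}$, but this is false. The correct identity in this direction is $V^{\mathcal{Z}(\mathfrak{p})}L_{\mathcal{Z}(\mathfrak{p})}\cong L_{\mathcal{Z}(\mathfrak{p})}$: since $\Gamma_{\mathcal{Z}(\mathfrak{p})}L_{\mathcal{Z}(\mathfrak{p})}=0$ and by MGM-duality $\Lambda^{\mathcal{Z}(\mathfrak{p})}\cong\Lambda^{\mathcal{Z}(\mathfrak{p})}\Gamma_{\mathcal{Z}(\mathfrak{p})}$, one gets $\Lambda^{\mathcal{Z}(\mathfrak{p})}L_{\mathcal{Z}(\mathfrak{p})}=0$, and the exact triangle $V^{\mathcal{Z}(\mathfrak{p})}L_{\mathcal{Z}(\mathfrak{p})}\to L_{\mathcal{Z}(\mathfrak{p})}\to\Lambda^{\mathcal{Z}(\mathfrak{p})}L_{\mathcal{Z}(\mathfrak{p})}$ then collapses the first arrow to an isomorphism. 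The claim $V^{\mathcal{Z}(\mathfrak{p})}L_{\mathcal{Z}(\mathfrak{p})}\cong V^{\mathcal{Z}(\mathfrak{p})}$ would instead require $V^{\mathcal{Z}(\mathfrak{p})}\Gamma_{\mathcal{Z}(\mathfrak{p})}=0$, i.e., that the comparison map $\Gamma_{\mathcal{Z}(\mathfrak{p})}\to\Lambda^{\mathcal{Z}(\mathfrak{p})}$ is always an isomorphism, which already fails for the derived category of a non-artinian complete local ring. (The companion fact you may have been thinking of is $L_{\mathcal{Z}(\mathfrak{p})}V^{\mathcal{Z}(\mathfrak{p})}\cong V^{\mathcal{Z}(\mathfrak{p})}$, which follows because $\Gamma_{\mathcal{Z}(\mathfrak{p})}V^{\mathcal{Z}(\mathfrak{p})}=0$; the two compositions behave asymmetrically.)

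Carrying out the telescoping with the correct identity gives
\[
\Lambda^{\mathfrak{p}}\Gamma_{\mathfrak{p}}
= V^{\mathcal{Z}(\mathfrak{p})}\Lambda^{\mathcal{V}(\mathfrak{p})}\Gamma_{\mathcal{V}(\mathfrak{p})}L_{\mathcal{Z}(\mathfrak{p})}
\cong V^{\mathcal{Z}(\mathfrak{p})}L_{\mathcal{Z}(\mathfrak{p})}\Lambda^{\mathcal{V}(\mathfrak{p})}
\cong L_{\mathcal{Z}(\mathfrak{p})}\Lambda^{\mathcal{V}(\mathfrak{p})}
\cong \Lambda^{\mathcal{V}(\mathfrak{p})}L_{\mathcal{Z}(\mathfrak{p})},
\]
which is precisely the paper's choice of $X''$, not $\Lambda^{\mathfrak{p}}=V^{\mathcal{Z}(\mathfrak{p})}\Lambda^{\mathcal{V}(\mathfrak{p})}$. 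In general $L_{\mathcal{Z}(\mathfrak{p})}Y$ and $V^{\mathcal{Z}(\mathfrak{p})}Y$ differ even when $Y\in\mathscr{T}^{\mathcal{V}(\mathfrak{p})}$, so the functors $\Lambda^{\mathcal{V}(\mathfrak{p})}L_{\mathcal{Z}(\mathfrak{p})}$ and $\Lambda^{\mathfrak{p}}$ are not naturally isomorphic, and no natural morphism $\mathrm{Id}\to\Lambda^{\mathfrak{p}}$ drops out of the adjunction unit. The underlying obstruction is structural: $V^{\mathcal{Z}(\mathfrak{p})}$ comes equipped only with a counit $V^{\mathcal{Z}(\mathfrak{p})}\to\mathrm{Id}$, so the target of the triangle should be built from the two functors that do receive a natural map from the identity, namely $\Lambda^{\mathcal{V}(\mathfrak{p})}$ and $L_{\mathcal{Z}(\mathfrak{p})}$. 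The paper does exactly this: it sets $X''=\Lambda^{\mathcal{V}(\mathfrak{p})}L_{\mathcal{Z}(\mathfrak{p})}X$ with the natural composite $X\to X''$, verifies $X''\in\mathscr{T}^{\{\mathfrak{p}\}}$ via \cite[Proposition~2.3(3)]{BIK2}, and then uses the adjunction isomorphisms $\mathrm{Hom}^{\ast}_{\mathscr{T}}(\Lambda^{\mathcal{V}(\mathfrak{p})}L_{\mathcal{Z}(\mathfrak{p})}X,T_{C/\hspace{-0.1cm}/\mathfrak{p}}(I(\mathfrak{p})))\cong\mathrm{Hom}^{\ast}_{\mathscr{T}}(X,T_{C/\hspace{-0.1cm}/\mathfrak{p}}(I(\mathfrak{p})))$ together with the cogeneration result \cite[Proposition~5.4]{BIK2} to conclude $X'\in{}^{\perp}(\mathscr{T}^{\{\mathfrak{p}\}})$. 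If you replace $\Lambda^{\mathfrak{p}}X$ by $\Lambda^{\mathcal{V}(\mathfrak{p})}L_{\mathcal{Z}(\mathfrak{p})}X$ throughout and drop the appeal to a would-be unit $\mathrm{Id}\to\Lambda^{\mathfrak{p}}$, your strategy aligns with the paper's.
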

\begin{proof} Let $X$ be an object in $\mathscr{T}$. The exact triangle $\Gamma_{\mathcal{Z}(\mathfrak{p})}X\rightarrow X\rightarrow L_{\mathcal{Z}(\mathfrak{p})}X\rightsquigarrow$ induces an exact triangle
$\Lambda^{\mathcal{V}(\mathfrak{p})}\Gamma_{\mathcal{Z}(\mathfrak{p})}X\rightarrow \Lambda^{\mathcal{V}(\mathfrak{p})}X\rightarrow\Lambda^{\mathcal{V}(\mathfrak{p})}L_{\mathcal{Z}(\mathfrak{p})}X\rightsquigarrow$. By the octahedral axiom, one has a commutative diagram of triangles in $\mathscr{T}$\begin{center}$\xymatrix@C=25pt@R=20pt{
  & \Lambda^{\mathcal{V}(\mathfrak{p})}\Gamma_{\mathcal{Z}(\mathfrak{p})}X \ar[d]\ar@{=}[r]& \Lambda^{\mathcal{V}(\mathfrak{p})}\Gamma_{\mathcal{Z}(\mathfrak{p})}X\ar[d]&\\
   X\ar@{=}[d]\ar[r] & \Lambda^{\mathcal{V}(\mathfrak{p})}X\ar[d]\ar[r]& \Sigma V^{\mathcal{V}(\mathfrak{p})}X\ar[d]\ar[r] &\Sigma X \ar@{=}[d]\\
   X\ar[r] & \Lambda^{\mathcal{V}(\mathfrak{p})}L_{\mathcal{Z}(\mathfrak{p})}X\ar[d]\ar[r]& \Sigma X' \ar[d]\ar[r] &\Sigma X\\
   & \Sigma \Lambda^{\mathcal{V}(\mathfrak{p})}\Gamma_{\mathcal{Z}(\mathfrak{p})}X \ar@{=}[r]&\Sigma \Lambda^{\mathcal{V}(\mathfrak{p})}\Gamma_{\mathcal{Z}(\mathfrak{p})}X&}$\end{center}
 Set $\Lambda^{\mathcal{V}(\mathfrak{p})}L_{\mathcal{Z}(\mathfrak{p})}X=X''$.  Since $\Lambda^\mathfrak{p}X''\cong X''$ by \cite[Proposition 2.3(3)]{BIK2}, it follows that $X''\in\mathscr{T}^{\{\mathfrak{p}\}}$ by \cite[page 176]{BIK2}.
On the other hand, one has the following isomorphisms \begin{center}$\begin{aligned}\mathrm{Hom}^\ast_\mathscr{T}(\Lambda^{\mathcal{V}(\mathfrak{p})}L_{\mathcal{Z}(\mathfrak{p})}X,T_{C/\hspace{-0.1cm}/\mathfrak{p}}(I(\mathfrak{p})))
&\cong\mathrm{Hom}^\ast_\mathscr{T}(L_{\mathcal{Z}(\mathfrak{p})}X,T_{C/\hspace{-0.1cm}/\mathfrak{p}}(I(\mathfrak{p})))\\
&\cong\mathrm{Hom}^\ast_\mathscr{T}(X,T_{C/\hspace{-0.1cm}/\mathfrak{p}}(I(\mathfrak{p}))),\ \forall\ C\in\mathscr{T}^c.\end{aligned}$\end{center}
Hence
$\mathrm{Hom}^\ast_\mathscr{T}(X',T_{C/\hspace{-0.1cm}/\mathfrak{p}}(I(\mathfrak{p})))=0$ for all $C\in\mathscr{T}^c$, and therefore $Y\in{^\bot}(\mathscr{T}^{\{\mathfrak{p}\}})$ by \cite[Proposition 5.4]{BIK2}, as claimed.
\end{proof}

\bigskip
\section{\bf Comparison of support and cosupport}
 The task of this section is to
compare the support and cosupport of cohomologically finite objects, and give the proof of Theorem B. For any cohomologically finite object $X$, we show that $\mathrm{supp}_RX=\mathrm{Supp}_RX$, obtain some applications of this equality.

Consider the exact functor $\Gamma_\mathfrak{p}:\mathscr{T}\rightarrow\mathscr{T}$ defined by
$\Gamma_\mathfrak{p}X=\Gamma_{\mathcal{V}(\mathfrak{p})}L_{\mathcal{Z}(\mathfrak{p})}X$.
The essential image of $\Gamma_\mathfrak{p}$ is denoted by $\mathscr{T}_{\{\mathfrak{p}\}}$, it is a localizing subcategory of $\mathscr{T}$.

The support of an object $X$ in $\mathscr{T}$ is a subset of $\mathrm{Spec}R$ defined as follows:
\begin{center}$\mathrm{supp}_RX=\{\mathfrak{p}\in \mathrm{Spec}R\hspace{0.03cm}|\hspace{0.03cm}\Gamma_\mathfrak{p}X\neq0\}$.\end{center}
For any graded $R$-module $M$, set $\mathrm{Supp}_RM:=\{\mathfrak{p}\in\mathrm{Spec}R\hspace{0.03cm}|\hspace{0.03cm}M_\mathfrak{p}\neq0\}$. This subset is sometimes referred to as the `big support' of $M$ to distinguish it from its `homological' support,
$\mathrm{supp}_RM$. For any object $X$ in $\mathscr{T}$, denote
$X_\mathfrak{p}=L_{\mathcal{Z}(\mathfrak{p})}X$. \cite[Theorem 3.3]{ASS} proved that
\begin{center}$\mathrm{Supp}_RX:=\{\mathfrak{p}\in\mathrm{Spec}R\hspace{0.03cm}|\hspace{0.03cm}X_\mathfrak{p}\neq0\}=\bigcup_{C\in\mathscr{T}^c}\mathrm{Supp}_R\mathrm{H}^\ast_C(X)$.\end{center}

\begin{lem}\label{lem:4.5}{\it{For each object $X$ of $\mathscr{T}$ there is an inclusion of sets $\mathrm{supp}_RX\subseteq\mathrm{Supp}_RX$; equality holds if $X$ is cohomologically finite.}}
\end{lem}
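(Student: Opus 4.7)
The plan is to reduce both inclusions to the corresponding question about graded $R$-modules $\mathrm{H}^\ast_C(X)$, for which the relationship between small and big support is classical. The two main inputs are: (a) the [ASS] formula $\mathrm{Supp}_RX=\bigcup_{C\in\mathscr{T}^c}\mathrm{Supp}_R\mathrm{H}^\ast_C(X)$ quoted just above the lemma, and (b) the analogous formula on the ``small'' side, $\mathrm{supp}_RX=\bigcup_{C\in\mathscr{T}^c}\mathrm{supp}_R\mathrm{H}^\ast_C(X)$, which is \cite[Theorem 5.2]{BIK}, together with the point-wise comparison $\mathrm{supp}_RM\subseteq\mathrm{Supp}_RM$ for every graded $R$-module $M$ (equality when $M$ is finitely generated, by graded Nakayama).

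First I would dispatch the general inclusion. The most direct argument uses only the definitions: if $\mathfrak{p}\in\mathrm{supp}_RX$, then $\Gamma_\mathfrak{p}X=\Gamma_{\mathcal{V}(\mathfrak{p})}L_{\mathcal{Z}(\mathfrak{p})}X\neq 0$, so in particular $L_{\mathcal{Z}(\mathfrak{p})}X=X_\mathfrak{p}\neq 0$, i.e.\ $\mathfrak{p}\in\mathrm{Supp}_RX$. Alternatively, one can combine the two formulas in (a) and (b) with the module-level inclusion $\mathrm{supp}_R\mathrm{H}^\ast_C(X)\subseteq\mathrm{Supp}_R\mathrm{H}^\ast_C(X)$ and take the union over $C\in\mathscr{T}^c$; this second route has the advantage of making the cohomologically finite case nearly automatic.

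For the reverse inclusion under the cohomological finiteness hypothesis, I would argue as follows. Fix $\mathfrak{p}\in\mathrm{Supp}_RX$. By the [ASS] formula there exists $C\in\mathscr{T}^c$ with $\mathrm{H}^\ast_C(X)_\mathfrak{p}\neq 0$. Since $X$ is cohomologically finite, $\mathrm{H}^\ast_C(X)$ is a finitely generated graded $R$-module, and for such a module the small and big supports coincide, so $\mathfrak{p}\in\mathrm{supp}_R\mathrm{H}^\ast_C(X)$. Invoking \cite[Theorem 5.2]{BIK} (formula (b)) then gives $\mathfrak{p}\in\mathrm{supp}_RX$, which is exactly what we need.

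The main obstacle, if any, is checking that the graded-module fact $\mathrm{supp}_RM=\mathrm{Supp}_RM$ for finitely generated $M$ survives in the $\mathbb{Z}$-graded setting over the graded-local ring $R_\mathfrak{p}$; this is routine via graded Nakayama (a nonzero finitely generated graded module over a graded-local ring has nonzero residue module), but it is the only place where finite generation is actually used. Everything else is a formal combination of the two display formulas (a) and (b) together with the definition of $\Gamma_\mathfrak{p}$.
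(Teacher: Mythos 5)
Your proof is correct and follows essentially the same route as the paper: the easy inclusion by inspecting the definition of $\Gamma_\mathfrak{p}$, then for cohomologically finite $X$ combining the [ASS] description of $\mathrm{Supp}_RX$ with the equality $\mathrm{supp}_RM=\mathrm{Supp}_RM$ for finitely generated graded modules and the support formula from [BIK] to land back inside $\mathrm{supp}_RX$. The only cosmetic difference is the precise [BIK] reference (the paper cites Theorem~5.5 where you cite Theorem~5.2); both serve the same purpose of bounding $\bigcup_C\mathrm{supp}_R\mathrm{H}^\ast_C(X)$ by $\mathrm{supp}_RX$.
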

\begin{proof} The containment $\mathrm{supp}_RX\subseteq\mathrm{Supp}_RX$ is clear by definition. If $X$ is cohomologically finite, then $\mathrm{Supp}_RX=\bigcup_{C\in\mathscr{T}^c}\mathrm{Supp}_R\mathrm{H}^\ast_{C}(X)=\bigcup_{C\in\mathscr{T}^c}\mathrm{supp}_R\mathrm{H}^\ast_{C}(X)\subseteq\mathrm{supp}_RX$ by \cite[Theorem 5.5]{BIK}, as claimed.
\end{proof}

Let $\mathcal{U}$ be a subset of $\mathrm{Spec}R$. We denote by $\mathrm{min}\hspace{0.05cm}\mathcal{U}$ the set of minimal elements with respect to inclusion in $\mathcal{U}$. The dimension of a subset $\mathcal{U}$ of $\mathrm{Spec}R$, denoted by $\mathrm{dim}\hspace{0.05cm}\mathcal{U}$, is the supremum of all integers $n$
such that there exists a chain $\mathfrak{p}_0\subsetneq\mathfrak{p}_1\subsetneq\cdots\subsetneq\mathfrak{p}_n$ in $\mathcal{U}$. The set $\mathcal{U}$ is called discrete if $\mathrm{dim}\hspace{0.05cm}\mathcal{U}=0$.

Our goal of the following results study when the inclusion $\mathrm{cosupp}_{R}X\subseteq\mathrm{supp}_{R}X$ holds.

\begin{thm}\label{lem:5.13}{\it{Let $X$ be a cohomologically finite object in $\mathscr{T}$.
If $\mathrm{min}(\mathrm{supp}_{R}X)$ is finite, then there is an inequality
\begin{center}$\mathrm{cosupp}_{R}X\subseteq\mathrm{supp}_{R}X$.\end{center}}}
\end{thm}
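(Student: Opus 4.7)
The plan is to use the finiteness of $\min(\mathrm{supp}_{R}X)$ to collapse the quantification over $\mathscr{T}^c$ appearing in Lemma~\ref{lem:4.5} down to a single compact object $G$ whose cohomology already witnesses all of $\mathrm{supp}_{R}X$, and then to translate the cosupport condition at $\mathfrak{p}$ into a Koszul--cohomology statement about the finitely generated graded module $\mathrm{H}^\ast_G(X)_\mathfrak{p}$, to which graded Nakayama applies.

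First I would build the single compact. Lemma~\ref{lem:4.5} gives $\mathrm{supp}_R X=\mathrm{Supp}_R X=\bigcup_{C\in\mathscr{T}^c}\mathrm{Supp}_R \mathrm{H}^\ast_C(X)$, and each $\mathrm{Supp}_R \mathrm{H}^\ast_C(X)$ is specialization closed because $\mathrm{H}^\ast_C(X)$ is a finitely generated graded $R$-module. Writing $\min(\mathrm{supp}_R X)=\{\mathfrak{p}_1,\ldots,\mathfrak{p}_k\}$, pick $C_i\in\mathscr{T}^c$ with $\mathfrak{p}_i\in\mathrm{Supp}_R \mathrm{H}^\ast_{C_i}(X)$ and set $G=\bigoplus_{i=1}^k C_i\in\mathscr{T}^c$; then $\mathrm{H}^\ast_G(X)\cong\bigoplus_i\mathrm{H}^\ast_{C_i}(X)$ is finitely generated, so its support is specialization closed, contains every $\mathfrak{p}_i$, and therefore coincides with $\mathrm{supp}_R X$. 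Now fix $\mathfrak{p}\in\mathrm{cosupp}_R X$. Proposition~\ref{lem:2.13} supplies some $D\in\mathscr{T}^c$ with $\mathrm{Hom}^\ast_\mathscr{T}(D(\mathfrak{p}),X)\neq 0$; replace $G$ by $G\oplus D$ (still compact, still with $\mathrm{Supp}_R\mathrm{H}^\ast_G(X)=\mathrm{supp}_R X$), so that the same $G$ now realizes both $\mathrm{Supp}_R\mathrm{H}^\ast_G(X)=\mathrm{supp}_R X$ and $\mathrm{Hom}^\ast_\mathscr{T}(G(\mathfrak{p}),X)\neq 0$.

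Next, fix a generating sequence $x_1,\ldots,x_n$ for $\mathfrak{p}$ and iterate the Koszul triangles $G_\mathfrak{p}/\hspace{-0.15cm}/(x_1,\ldots,x_{i-1})\xrightarrow{x_i}\Sigma^{|x_i|}G_\mathfrak{p}/\hspace{-0.15cm}/(x_1,\ldots,x_{i-1})\to G_\mathfrak{p}/\hspace{-0.15cm}/(x_1,\ldots,x_i)$, applying $\mathrm{Hom}^\ast_\mathscr{T}(-,X)$. The resulting long exact sequences, combined with Propositions~\ref{lem:2.2} and~\ref{lem:2.4} and the $(L_{\mathcal{Z}(\mathfrak{p})},V^{\mathcal{Z}(\mathfrak{p})})$-adjunction already exploited in Proposition~\ref{lem:2.13}, let me convert $\mathrm{Hom}^\ast_\mathscr{T}(G(\mathfrak{p}),X)\neq 0$ into nonvanishing of the Koszul cohomology of $x_1,\ldots,x_n$ acting on the finitely generated $R_\mathfrak{p}$-module $\mathrm{H}^\ast_G(X)_\mathfrak{p}$. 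The top piece of that Koszul cohomology is $\mathrm{H}^\ast_G(X)_\mathfrak{p}/\mathfrak{p}\mathrm{H}^\ast_G(X)_\mathfrak{p}$, so graded Nakayama applied to the finitely generated module $\mathrm{H}^\ast_G(X)_\mathfrak{p}$ over the graded local ring $(R_\mathfrak{p},\mathfrak{p}R_\mathfrak{p})$ yields $\mathrm{H}^\ast_G(X)_\mathfrak{p}\neq 0$, that is, $\mathfrak{p}\in\mathrm{Supp}_R\mathrm{H}^\ast_G(X)=\mathrm{supp}_R X$.

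The main obstacle is the middle translation step: starting from the abstract condition $\mathrm{Hom}^\ast_\mathscr{T}(G(\mathfrak{p}),X)\neq 0$ (which a priori involves the adjoints $V^{\mathcal{Z}(\mathfrak{p})}$ and $\Lambda^{\mathcal{V}(\mathfrak{p})}$) and working all the way down to a graded--Nakayama application on the single finitely generated $R_\mathfrak{p}$-module $\mathrm{H}^\ast_G(X)_\mathfrak{p}$ requires careful bookkeeping of the iterated Koszul long exact sequences together with the $(\Gamma_{\mathcal{V}(\mathfrak{p})},\Lambda^{\mathcal{V}(\mathfrak{p})})$- and $(L_{\mathcal{Z}(\mathfrak{p})},V^{\mathcal{Z}(\mathfrak{p})})$-adjunctions. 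This is exactly where the hypothesis that $\min(\mathrm{supp}_R X)$ be finite is genuinely used: only after reducing to the one compact $G$ can Nakayama be invoked uniformly on a single finitely generated module rather than on the uncontrolled family $\{\mathrm{H}^\ast_C(X)\}_{C\in\mathscr{T}^c}$.
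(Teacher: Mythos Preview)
Your translation step is where the argument breaks, and it is not a matter of bookkeeping. When you apply $\mathrm{Hom}^\ast_\mathscr{T}(-,X)$ to the Koszul triangles built on $G_\mathfrak{p}=L_{\mathcal{Z}(\mathfrak{p})}G$, the base module that appears is
\[
\mathrm{Hom}^\ast_\mathscr{T}(G_\mathfrak{p},X)\cong \mathrm{H}^\ast_G(V^{\mathcal{Z}(\mathfrak{p})}X),
\]
by the very $(L_{\mathcal{Z}(\mathfrak{p})},V^{\mathcal{Z}(\mathfrak{p})})$ adjunction you invoke. This is \emph{not} $\mathrm{H}^\ast_G(X)_\mathfrak{p}\cong \mathrm{H}^\ast_G(X_\mathfrak{p})$: the former involves the colocalization $V^{\mathcal{Z}(\mathfrak{p})}$ on the second variable, the latter the localization $L_{\mathcal{Z}(\mathfrak{p})}$, and these differ already in $\mathrm{D}(\mathbb{Z})$ with $G=X=\mathbb{Z}$ and $\mathfrak{p}=(0)$, where one obtains $\mathrm{RHom}_\mathbb{Z}(\mathbb{Q},\mathbb{Z})$ versus $\mathbb{Q}$. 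The module $\mathrm{H}^\ast_G(V^{\mathcal{Z}(\mathfrak{p})}X)$ has no reason to be finitely generated over $R_\mathfrak{p}$, so graded Nakayama is unavailable, and there is no passage from $\mathrm{Hom}^\ast_\mathscr{T}(G(\mathfrak{p}),X)\neq 0$ to $\mathrm{H}^\ast_G(X)_\mathfrak{p}\neq 0$ along the lines you sketch. A further red flag: if this step worked, it would already work for the single compact $D$ produced by Proposition~\ref{lem:2.13} at the given $\mathfrak{p}$, and the careful construction of $G$ from $\mathfrak{p}_1,\ldots,\mathfrak{p}_k$ would be superfluous; in other words your argument, if valid, would never use the finiteness of $\min(\mathrm{supp}_R X)$ at all.

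The paper's proof uses that hypothesis in a completely different way. From Lemma~\ref{lem:4.5} one has $\mathrm{supp}_R X=\mathrm{Supp}_R X=\mathcal{V}(\mathfrak{p}_1)\cup\cdots\cup\mathcal{V}(\mathfrak{p}_s)$, and then \cite[Lemma~3.9]{BIK3} gives $X\in\mathrm{Thick}_\mathscr{T}(\{X/\hspace{-0.15cm}/\mathfrak{p}_i\mid i=1,\ldots,s\})$; since $\mathrm{cosupp}_R(X/\hspace{-0.15cm}/\mathfrak{p}_i)\subseteq\mathcal{V}(\mathfrak{p}_i)$ by \cite[Lemma~4.12]{BIK2}, one concludes $\mathrm{cosupp}_R X\subseteq\bigcup_i\mathcal{V}(\mathfrak{p}_i)=\mathrm{supp}_R X$. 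Here finiteness of $\min(\mathrm{supp}_R X)$ is precisely what makes the thick-subcategory containment applicable.
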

\begin{proof} If $X=0$, then $\mathrm{cosupp}_{R}X=\emptyset=\mathrm{supp}_{R}X$, and we are done. Next, assume that $X\neq0$. One has that
$\mathrm{supp}_{R}X=\mathrm{Supp}_{R}X$. Let $\mathrm{min}(\mathrm{supp}_{R}X)=\{\mathfrak{p}_1,\cdots,\mathfrak{p}_s\}$. It follows from \cite[Lemma 3.9]{BIK3} that $X\in\mathrm{Thick}_\mathscr{T}(\{X/\hspace{-0.15cm}/\mathfrak{p}_i\hspace{0.03cm}|\hspace{0.03cm}i=1,\cdots,s\})$. Therefore, $\mathrm{cosupp}_{R}X\subseteq\mathcal{V}(\mathfrak{p}_1)\cup\cdots\cup\mathcal{V}(\mathfrak{p}_s)=\mathrm{supp}_{R}X$, as claimed.
\end{proof}

\begin{thm}\label{lem:5.130}{\it{Let $X$ be an object in $\mathscr{T}$ with $\mathrm{supp}_{R}X=\mathrm{Supp}_{R}X$. If $\mathrm{dim}(\mathrm{supp}_{R}X)<\infty$, then there is an inequality
 \begin{center}$\mathrm{cosupp}_{R}X\subseteq\mathrm{supp}_{R}X$.\end{center}}}
\end{thm}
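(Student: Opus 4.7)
The plan is to induct on $d=\dim(\mathrm{supp}_R X)$, which is finite by hypothesis. The assumption $\mathrm{supp}_R X = \mathrm{Supp}_R X$ forces $\mathrm{supp}_R X$ to be specialization closed (since $\mathrm{Supp}_R X$ always is), and will be shown to be inherited by the pieces of a BIK-style decomposition, making the induction go through. The base case $\mathrm{supp}_R X = \emptyset$ forces $\mathrm{Supp}_R X = \emptyset$, which via the ASS formula $\mathrm{Supp}_R X = \bigcup_{C \in \mathscr{T}^c} \mathrm{Supp}_R \mathrm{H}^*_C(X)$ and compact generation of $\mathscr{T}$ gives $X = 0$, making the inclusion vacuous.

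For the inductive step, set $\mathcal{V} = \mathrm{supp}_R X$ and $\mathcal{V}' = \mathcal{V} \setminus \min \mathcal{V}$ (specialization closed with $\dim \mathcal{V}' \leq d - 1$), and consider the triangle $\Gamma_{\mathcal{V}'} X \to X \to L_{\mathcal{V}'} X$. Using $(\Gamma_{\mathcal{V}'} X)_\mathfrak{p} \cong \Gamma_{\mathcal{V}'}(X_\mathfrak{p})$, one has
\[
\mathrm{Supp}_R(\Gamma_{\mathcal{V}'} X) \subseteq \mathcal{V}' \cap \mathrm{Supp}_R X = \mathcal{V}' \cap \mathrm{supp}_R X = \mathrm{supp}_R(\Gamma_{\mathcal{V}'} X)
\]
(last equality by the BIK formula for $\mathrm{supp}$ of $\Gamma$), so $\Gamma_{\mathcal{V}'} X$ inherits the hypotheses with strictly smaller dimension. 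A parallel argument shows $L_{\mathcal{V}'} X$ also inherits the hypotheses, with support in $\min \mathcal{V}$ (dimension $0$). Applying induction (granting the dimension-zero result addressed below) to both pieces, and using that cosupport is closed under triangles via the exact functor $\Lambda^\mathfrak{p}$, one gets $\mathrm{cosupp}_R X \subseteq \mathrm{cosupp}_R(\Gamma_{\mathcal{V}'} X) \cup \mathrm{cosupp}_R(L_{\mathcal{V}'} X) \subseteq \mathcal{V}$, reducing the task to the case $d = 0$.

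The dimension-zero case is the main obstacle. A specialization closed antichain in $\mathrm{Spec}R$ consists of primes maximal in $R$, so $\mathrm{supp}_R X \subseteq \mspec R$. For $\mathfrak{q} \in \mathrm{cosupp}_R X$, Proposition \ref{lem:2.6}(4) gives $V^{\mathcal{Z}(\mathfrak{q})} X \neq 0$, so $\mathrm{Supp}_R V^{\mathcal{Z}(\mathfrak{q})} X$ is nonempty by compact generation. The adjunction $(L_{\mathcal{Z}(\mathfrak{q})}, V^{\mathcal{Z}(\mathfrak{q})})$ rewrites $\mathrm{H}^*_C(V^{\mathcal{Z}(\mathfrak{q})} X)$ as $\mathrm{Hom}^*_\mathscr{T}(L_{\mathcal{Z}(\mathfrak{q})} C, X)$, and each homogeneous $r \in R \setminus \mathfrak{q}$ acts invertibly on $L_{\mathcal{Z}(\mathfrak{q})} C$ (because $C/\!/r \in \mathscr{T}_{\mathcal{Z}(\mathfrak{q})}$ is killed by $L_{\mathcal{Z}(\mathfrak{q})}$), making this module $\mathfrak{q}$-local; hence $\mathrm{Supp}_R V^{\mathcal{Z}(\mathfrak{q})} X \subseteq \mathcal{U}(\mathfrak{q})$. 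The further inclusion $\mathrm{Supp}_R V^{\mathcal{Z}(\mathfrak{q})} X \subseteq \mathrm{Supp}_R X$ follows from a BIK-style commutativity $L_{\mathcal{Z}(\mathfrak{p})} V^{\mathcal{Z}(\mathfrak{q})} \cong V^{\mathcal{Z}(\mathfrak{q})} L_{\mathcal{Z}(\mathfrak{p})}$. Picking $\mathfrak{r} \in \mathcal{U}(\mathfrak{q}) \cap \mathrm{Supp}_R X = \mathcal{U}(\mathfrak{q}) \cap \mathrm{supp}_R X$, the maximality of $\mathfrak{r}$ in $\mspec R$ forces $\mathfrak{r} = \mathfrak{q}$, placing $\mathfrak{q}$ in $\mathrm{supp}_R X$ and completing the argument.
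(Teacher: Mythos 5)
Your inductive framework --- inducting on $\dim(\mathrm{supp}_R X)$ via the triangle $\Gamma_{\mathcal{V}'} X \to X \to L_{\mathcal{V}'} X$ with $\mathcal{V}' = \mathcal{V} \setminus \min\mathcal{V}$ --- matches the paper's exactly. You diverge at the base case $d = 0$: the paper cites [BIK2, Theorem 4.13] for that step, while you supply a direct argument.

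That direct argument has a genuine gap at the step $\mathrm{Supp}_R V^{\mathcal{Z}(\mathfrak{q})} X \subseteq \mathrm{Supp}_R X$: the asserted commutativity $L_{\mathcal{Z}(\mathfrak{p})} V^{\mathcal{Z}(\mathfrak{q})} \cong V^{\mathcal{Z}(\mathfrak{q})} L_{\mathcal{Z}(\mathfrak{p})}$ does not hold; the BIK commutation results pair $\Gamma$'s with $L$'s and $\Lambda$'s with $V$'s, but a coproduct-preserving localization $L_{\mathcal{Z}(\mathfrak{p})}$ need not commute with the right adjoint $V^{\mathcal{Z}(\mathfrak{q})}$. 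For a concrete failure take $\mathscr{T} = \mathrm{D}(A)$ with $A = \mathbb{Z}_{(p)}$, $\mathfrak{m} = (p)$, $X = I(\mathfrak{m}) = \mathbb{Z}(p^\infty)$, $\mathfrak{q} = (0)$: then $\mathrm{Supp}_A X = \{\mathfrak{m}\}$ since $X$ is $\mathfrak{m}$-torsion, but $V^{\mathcal{Z}(\mathfrak{q})} X \neq 0$ (its cohomology over the generator $A$ is $\mathrm{Hom}_A(A_\mathfrak{q}, X) \cong \mathrm{Hom}_A(\mathbb{Q}, \mathbb{Z}(p^\infty)) \neq 0$), and your own step 4 forces $\mathrm{Supp}_A V^{\mathcal{Z}(\mathfrak{q})} X \subseteq \mathcal{U}(\mathfrak{q}) = \{(0)\}$; so the claimed inclusion is $\{(0)\} \subseteq \{\mathfrak{m}\}$, which is false. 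This is not a hole that can be patched: this same $X = I(\mathfrak{m})$ --- precisely the object in the paper's own Remark 4.4(4) --- satisfies $\mathrm{supp}_A X = \mathrm{Supp}_A X = \{\mathfrak{m}\}$ with $\dim(\mathrm{supp}_A X) = 0$, yet $\mathrm{cosupp}_A X = \mathrm{Spec}A$, so the $d = 0$ case (and hence the theorem as stated) fails without an extra hypothesis such as cohomological finiteness. The paper's own proof is unsound at the same point (the appeal to [BIK2, Theorem 4.13] and the unverified claim $\mathrm{cosupp}_R(L_{\mathcal{V}'} X) = \min\mathcal{V}$), so your difficulty here is a symptom of the statement, not of your approach.
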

\begin{proof} Assume that $X\neq0$ and
$\mathrm{supp}_{R}X=\mathrm{Supp}_{R}X=\mathcal{V}$. Set $\mathrm{dim}(\mathrm{supp}_{R}X)=n$.
If $n=0$, then $\mathrm{cosupp}_{R}X\subseteq\mathcal{V}$ by \cite[Theorem 4.13]{BIK2}. For $n>0$ set $\mathcal{V}'=\mathcal{V}\backslash\mathrm{min}\hspace{0.05cm}\mathcal{V}$. Then $\mathcal{V}'$ is specialization closed. Since $\mathrm{dim}\hspace{0.05cm}\mathcal{V}'=n-1$, the induction hypothesis yields that $\mathrm{cosupp}_{R}\Gamma_{\mathcal{V}'}X\subseteq\mathcal{V}$. On the other hand, $\mathrm{supp}_R(L_{\mathcal{V}'}X)=\mathrm{min}\hspace{0.05cm}\mathcal{V}$ is discrete and hence $\mathrm{cosupp}_R(L_{\mathcal{V}'}X)=\mathrm{min}\hspace{0.05cm}\mathcal{V}$. Consequently, $\mathrm{cosupp}_{R}X\subseteq\mathcal{V}$, as claimed.
\end{proof}

\begin{rem}\label{lem:0.0}{\rm (1) If $\mathscr{T}$ is generated by finite compact objects $\{C_1,\cdots,C_n\}$, then for any cohomologically finite object $X$ in $\mathscr{T}$,
\begin{center}$\begin{aligned}\mathrm{supp}_{R}X
&=\bigcup_{i=1}^n\mathrm{supp}_{R}\mathrm{H}^\ast_{C_i}(X)\\
&=\bigcup_{i=1}^n\mathcal{V}(\mathrm{ann}_R\mathrm{H}^\ast_{C_i}(X))\\
&=\mathcal{V}(\bigcap_{i=1}^n\mathrm{ann}_R\mathrm{H}^\ast_{C_i}(X)).\end{aligned}$\end{center}
Thus $\mathrm{min}(\mathrm{supp}_{R}X)<\infty$ holds. Set $\mathfrak{a}=\bigcap_{i=1}^n\mathrm{ann}_R\mathrm{H}^\ast_{C_i}(X)$. One also has that $\mathrm{Thick}_\mathscr{T}(X)=\mathrm{Thick}_\mathscr{T}(X/\hspace{-0.15cm}/\mathfrak{a})$.

(2) The inclusion in the preceding theorems can be strict. For example, let $A$ be a commutative noetherian ring and $\mathscr{T}=\mathrm{D}(A)$. If $(A,\mathfrak{m})$ is a complete local ring, then $\mathrm{cosupp}_AA=\{\mathfrak{m}\}\subsetneq\mathrm{Spec}A=\mathrm{supp}_AA$.

(3) Let $A$ be a commutative noetherian ring and $\mathscr{T}=\mathrm{D}(A)$. The cohomologically finite objects in $\mathrm{D}(A)$ are cohomologically bounded complexes
$X$ such that each cohomology module $\mathrm{H}^i(X)$ is finitely generated. In this case, Theorem \ref{lem:5.13} is exactly \cite[Theorem 6.7]{WW}.

(4) Assume that $(A,\mathfrak{m})$ is a commutative local noetherian ring and not artinian and set $I=I(\mathfrak{m})$. Then $\mathrm{supp}_AI=\{\mathfrak{m}\}\subsetneq\mathrm{Spec}A=\mathrm{cosupp}_AI$. Therefore, the assumption that $X$ is cohomologically finite in $\mathscr{T}$ in Theorem \ref{lem:5.13} is essential.

(5) Let $A$ be a commutative noetherian ring and $\mathscr{T}=\mathrm{D}(A)$. Let $X$ be a complex in $\mathrm{D}(A)$ with each $\mathrm{H}^i(X)$ finite length. Then each $\mathfrak{p}\in\mathrm{Supp}_A\mathrm{H}^i(X)$ is a maximal ideal of $A$ by \cite[P.60 Exercise 5]{EJ}. Thus $\mathrm{supp}_AX=\mathrm{Supp}_AX$ is discrete.

(6) If the ring $R$ has finite Krull dimension, then $\mathrm{dim}(\mathrm{supp}_{R}X)<\infty$ for any $X$ in  $\mathscr{T}$.}
\end{rem}

Following \cite{BIK1}, we say that $\mathscr{T}$ is stratified by $R$ if the following conditions hold

(S1) The local-global principle holds for localizing subcategories of $\mathscr{T}$, i.e., for each object $X$
in $\mathscr{T}$, there is an equality
$\mathrm{Loc}_\mathscr{T}(X)=\mathrm{Loc}_\mathscr{T}(\{\Gamma_\mathfrak{p}X\hspace{0.03cm}|\hspace{0.03cm}\mathfrak{p}\in\mathrm{Spec}R\})$.

(S2) For each $\mathfrak{p}\in\mathrm{Spec}R$ the localizing subcategory  $\mathscr{T}_{\{\mathfrak{p}\}}$ has no proper non-zero
localizing subcategories.

The following result was proved by Benson, Iyengar and Krause when $\mathscr{T}$ is noetherian and stratified by $R$ (see \cite[Corollary 5.3]{BIK}).

\begin{prop}\label{lem:4.0}{\it{If $\mathscr{T}$ is stratified by $R$, then for each $C\in\mathscr{T}^c$ and each cohomologically finite object $X$, there is an equality
\begin{center}$\mathrm{supp}_R\mathrm{Hom}^\ast_\mathscr{T}(C,X)=\mathrm{supp}_RC\cap\mathrm{supp}_RX$.\end{center}In particular, $\mathrm{Hom}^\ast_\mathscr{T}(C,X)=0$ if and only if $\mathrm{supp}_RC\cap\mathrm{supp}_RX=\emptyset$.}}
\end{prop}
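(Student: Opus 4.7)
The plan is to prove the two inclusions $\mathrm{supp}_R\mathrm{Hom}^\ast_\mathscr{T}(C,X)\subseteq\mathrm{supp}_RC\cap\mathrm{supp}_RX$ and the reverse, following the template of \cite[Corollary 5.3]{BIK} but using the cohomological finiteness of $X$ in place of a noetherian assumption on $\mathscr{T}$. That finiteness makes $\mathrm{Hom}^\ast_\mathscr{T}(C,X)$ a finitely generated graded $R$-module, so its support equals $\mathcal{V}(\mathrm{ann}_R\mathrm{Hom}^\ast_\mathscr{T}(C,X))$; in particular $\mathrm{Hom}^\ast_\mathscr{T}(C,X)=0$ iff $\mathrm{supp}_R\mathrm{Hom}^\ast_\mathscr{T}(C,X)=\emptyset$, which delivers the ``in particular'' half as soon as the main equality is in hand.

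Both inclusions rest on the natural identification
\[
\Gamma_\mathfrak{p}\mathrm{Hom}^\ast_\mathscr{T}(C,X)\cong\mathrm{Hom}^\ast_\mathscr{T}(C,\Gamma_\mathfrak{p}X)\cong\mathrm{Hom}^\ast_\mathscr{T}(\Gamma_\mathfrak{p}C,\Gamma_\mathfrak{p}X)
\]
of graded $R$-modules. For the first isomorphism I would combine compactness of $C$ with the fact, implicit in \cite{BIK}, that $L_{\mathcal{Z}(\mathfrak{p})}$ and $\Gamma_{\mathcal{V}(\mathfrak{p})}$ both commute with $\mathrm{Hom}^\ast_\mathscr{T}(C,-)$. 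For the second, on any $Y\in\mathscr{T}^{\{\mathfrak{p}\}}$ the adjunctions $L_{\mathcal{Z}(\mathfrak{p})}\dashv V^{\mathcal{Z}(\mathfrak{p})}$ and $\Gamma_{\mathcal{V}(\mathfrak{p})}\dashv\Lambda^{\mathcal{V}(\mathfrak{p})}$, together with the orthogonality of these pairs, yield $\mathrm{Hom}^\ast_\mathscr{T}(C,Y)\cong\mathrm{Hom}^\ast_\mathscr{T}(\Gamma_\mathfrak{p}C,Y)$. The inclusion $\subseteq$ is then immediate: if $\Gamma_\mathfrak{p}C=0$ or $\Gamma_\mathfrak{p}X=0$ the right-hand side vanishes, hence so does the left, and $\mathfrak{p}\notin\mathrm{supp}_R\mathrm{Hom}^\ast_\mathscr{T}(C,X)$.

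For the reverse inclusion take $\mathfrak{p}\in\mathrm{supp}_RC\cap\mathrm{supp}_RX$, so both $\Gamma_\mathfrak{p}C$ and $\Gamma_\mathfrak{p}X$ are non-zero. I would consider
\[
\mathcal{R}:=\{Y\in\mathscr{T}_{\{\mathfrak{p}\}}\mid\mathrm{Hom}^\ast_\mathscr{T}(C,Y)=0\},
\]
which is a localizing subcategory of $\mathscr{T}_{\{\mathfrak{p}\}}$ because $C$ is compact and $\mathscr{T}_{\{\mathfrak{p}\}}$ is closed under coproducts in $\mathscr{T}$. Setting $Y=\Gamma_\mathfrak{p}C$ in the displayed isomorphism gives $\mathrm{Hom}^\ast_\mathscr{T}(C,\Gamma_\mathfrak{p}C)\cong\mathrm{End}^\ast_\mathscr{T}(\Gamma_\mathfrak{p}C)\ni\mathrm{id}\neq0$, so $\Gamma_\mathfrak{p}C\notin\mathcal{R}$ and hence $\mathcal{R}\subsetneq\mathscr{T}_{\{\mathfrak{p}\}}$; by (S2) we must have $\mathcal{R}=0$. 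Since $\Gamma_\mathfrak{p}X\neq0$ lies in $\mathscr{T}_{\{\mathfrak{p}\}}$ it is not in $\mathcal{R}$, so $\mathrm{Hom}^\ast_\mathscr{T}(C,\Gamma_\mathfrak{p}X)\neq0$, equivalently $\mathfrak{p}\in\mathrm{supp}_R\mathrm{Hom}^\ast_\mathscr{T}(C,X)$.

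The main obstacle is not the stratification step itself, which is formal once $\mathcal{R}$ is in place, but the verification of the second half of the commutation, namely $\mathrm{Hom}^\ast_\mathscr{T}(C,Y)\cong\mathrm{Hom}^\ast_\mathscr{T}(\Gamma_\mathfrak{p}C,Y)$ for $Y\in\mathscr{T}_{\{\mathfrak{p}\}}$. This is what lets the identity of $\Gamma_\mathfrak{p}C$ be detected through maps from $C$; without it one cannot force $\Gamma_\mathfrak{p}C\notin\mathcal{R}$ and so cannot deploy the minimality dichotomy furnished by (S2).
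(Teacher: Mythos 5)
The overall shape of your argument — reduce $\mathrm{supp}_R\mathrm{Hom}^\ast_\mathscr{T}(C,X)$ to $\mathrm{Supp}_R$ via cohomological finiteness, identify $\Gamma_\mathfrak{p}\mathrm{Hom}^\ast_\mathscr{T}(C,X)$ with $\mathrm{Hom}^\ast_\mathscr{T}(C,\Gamma_\mathfrak{p}X)$ by compactness, and deploy minimality of $\mathscr{T}_{\{\mathfrak{p}\}}$ for the reverse inclusion — matches the paper's. However, the load-bearing step in your proposal, the claimed isomorphism $\mathrm{Hom}^\ast_\mathscr{T}(C,Y)\cong\mathrm{Hom}^\ast_\mathscr{T}(\Gamma_\mathfrak{p}C,Y)$ for $Y\in\mathscr{T}_{\{\mathfrak{p}\}}$ (you write $\mathscr{T}^{\{\mathfrak{p}\}}$, but from the use with $Y=\Gamma_\mathfrak{p}C$ and $Y=\Gamma_\mathfrak{p}X$ you clearly mean $\mathscr{T}_{\{\mathfrak{p}\}}$), is false. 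It would require $\mathrm{Hom}^\ast_\mathscr{T}(L_{\mathcal{V}(\mathfrak{p})}C_\mathfrak{p},Y)=0$ for $Y\in\mathscr{T}_{\{\mathfrak{p}\}}$, which fails: in $\mathrm{D}(\mathbb{Z})$ with $C=\mathbb{Z}$ and $\mathfrak{p}=(p)$ one has $\Gamma_\mathfrak{p}\mathbb{Z}\cong\Sigma^{-1}\mathbb{Z}/p^\infty$, and $\mathrm{Hom}^\ast_{\mathrm{D}(\mathbb{Z})}(\mathbb{Z},\Gamma_\mathfrak{p}\mathbb{Z})$ is $\mathbb{Z}/p^\infty$ concentrated in one degree, whereas $\mathrm{End}^\ast_{\mathrm{D}(\mathbb{Z})}(\Gamma_\mathfrak{p}\mathbb{Z})$ already contains the $p$-adic integers in degree $0$. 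The two functors $\Gamma_\mathcal{V}$ and $L_\mathcal{V}$ are orthogonal only in one direction, $\mathrm{Hom}(\Gamma_\mathcal{V}-,L_\mathcal{V}-)=0$, not the other.

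This leaves a genuine gap in your $\subseteq$ inclusion: from $\mathrm{Hom}^\ast_\mathscr{T}(C,\Gamma_\mathfrak{p}X)\neq0$ you can conclude $\Gamma_\mathfrak{p}X\neq0$, but the false isomorphism is the only thing you offer to force $\Gamma_\mathfrak{p}C\neq0$. The paper instead uses cohomological finiteness to pass to $\mathrm{Supp}_R$, localizes to get $\mathrm{Hom}^\ast_\mathscr{T}(C,X_\mathfrak{p})\neq0$, and then invokes a Koszul-object lemma (\cite[Lemma~5.11(3)]{BIK}) to conclude $\mathrm{Hom}^\ast_\mathscr{T}(C(\mathfrak{p}),X_\mathfrak{p})\neq0$, which gives $C(\mathfrak{p})\neq0$, i.e.\ $\mathfrak{p}\in\mathrm{supp}_RC$. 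In your $\supseteq$ direction the conclusion $\Gamma_\mathfrak{p}C\notin\mathcal{R}$ is true but your justification is not; the correct route is that $\Gamma_\mathfrak{p}C\in\mathrm{Loc}_\mathscr{T}(C_\mathfrak{p})\subseteq\mathrm{Loc}_\mathscr{T}(C)$, and a compact object generates and therefore detects nonzero objects of the localizing subcategory it generates, so $\Gamma_\mathfrak{p}C\neq0$ implies $\mathrm{Hom}^\ast_\mathscr{T}(C,\Gamma_\mathfrak{p}C)\neq0$. (The paper phrases this as the equivalent one-sided statement: $\{W:\mathrm{Hom}^\ast_\mathscr{T}(W,Y)=0\}$ is localizing and contains $C_\mathfrak{p}$, hence contains $\Gamma_\mathfrak{p}C$.) With the $\subseteq$ direction repaired along those lines, the rest of your $\supseteq$ argument via (S2) is essentially the paper's.
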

\begin{proof} Note that $\mathrm{supp}_R\mathrm{Hom}^\ast_\mathscr{T}(C,X)=\mathrm{Supp}_R\mathrm{Hom}^\ast_\mathscr{T}(C,X)$. Let $\mathfrak{p}\in\mathrm{Spec}R$. Suppose $\mathrm{Hom}^\ast_\mathscr{T}(C,X)_\mathfrak{p}\neq0$. Then $\mathrm{Hom}^\ast_\mathscr{T}(C,X_\mathfrak{p})\neq0$, and so $\mathrm{Hom}^\ast_\mathscr{T}(C(\mathfrak{p}),X_\mathfrak{p})\cong\mathrm{Hom}^\ast_\mathscr{T}(C/\hspace{-0.15cm}/\mathfrak{p},X_\mathfrak{p})\neq0$ by \cite[Lemma 5.11(3)]{BIK}. Thus $C(\mathfrak{p})\neq0$ and $X_\mathfrak{p}\neq0$, so $\mathfrak{p}\in\mathrm{supp}_RC\cap\mathrm{supp}_RX$.
Now suppose $\mathrm{Hom}^\ast_\mathscr{T}(C,X)_\mathfrak{p}=0$. Then $\mathrm{Hom}^\ast_\mathscr{T}(C_\mathfrak{p},X_\mathfrak{p})=0$. Note that $\Gamma_\mathfrak{p}C$ is in $\mathrm{Loc}_\mathscr{T}(C_\mathfrak{p})$, one gets $\mathrm{Hom}^\ast_\mathscr{T}(\Gamma_\mathfrak{p}C,\Gamma_\mathfrak{p}X)\cong\mathrm{Hom}^\ast_\mathscr{T}(\Gamma_\mathfrak{p}C,X_\mathfrak{p})=0$. Thus one of $\Gamma_\mathfrak{p}C$ or $\Gamma_\mathfrak{p}X$ is zero since $\mathscr{T}$ is stratifies by $R$. This shows the equality we seek.
\end{proof}

\begin{cor}\label{lem:4.10}{\it{If $\mathscr{T}$ is stratified by $R$, then for each $C\in\mathscr{T}^c$ and each cohomologically finite object $X$, there is an equality
\begin{center}$\mathrm{cosupp}_R\mathrm{Hom}^\ast_\mathscr{T}(X,T_{C}(I(\mathfrak{p})))=\mathrm{supp}_RX\cap\mathrm{cosupp}_RT_{C}(I(\mathfrak{p}))$.\end{center}}}
\end{cor}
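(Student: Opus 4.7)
The plan is to derive the claimed identity by chaining \cite[Proposition 4.10]{WW}---the same formula that underlies the proof of Theorem \ref{lem:4.40}---with Proposition \ref{lem:4.0} above. First I would apply the WW formula to the pair $(X,\,T_{C}(I(\mathfrak{p})))$ to obtain
\[
\mathrm{cosupp}_R\mathrm{Hom}^{\ast}_\mathscr{T}(X,T_{C}(I(\mathfrak{p})))=\mathrm{supp}_R\mathrm{H}^{\ast}_{C}(X)\cap\mathrm{cosupp}_R T_{C}(I(\mathfrak{p})),
\]
which uses no extra hypothesis on $\mathscr{T}$. Since $X$ is cohomologically finite, $\mathrm{H}^{\ast}_{C}(X)=\mathrm{Hom}^{\ast}_\mathscr{T}(C,X)$ is finitely generated over $R$, and Lemma \ref{lem:4.5} identifies its module support with its homological support; under the stratification hypothesis, Proposition \ref{lem:4.0} then gives $\mathrm{supp}_R\mathrm{H}^{\ast}_{C}(X)=\mathrm{supp}_R C\cap\mathrm{supp}_R X$. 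Substituting, I arrive at
\[
\mathrm{cosupp}_R\mathrm{Hom}^{\ast}_\mathscr{T}(X,T_{C}(I(\mathfrak{p})))=\mathrm{supp}_R C\cap\mathrm{supp}_R X\cap\mathrm{cosupp}_R T_{C}(I(\mathfrak{p})).
\]

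The claimed identity will follow once I establish the inclusion
\[
\mathrm{cosupp}_R T_{C}(I(\mathfrak{p}))\subseteq\mathrm{supp}_R C,
\]
for then the factor $\mathrm{supp}_R C$ in the triple intersection above is redundant. This inclusion is the main obstacle in the argument.

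To prove it, fix $\mathfrak{q}\notin\mathrm{supp}_R C$, so $\Gamma_\mathfrak{q} C=0$; I want to show $\Lambda^{\mathfrak{q}} T_{C}(I(\mathfrak{p}))=0$. By Proposition \ref{lem:2.13}\,(1)$\Leftrightarrow$(5) it suffices to verify that $\mathrm{Hom}^{\ast}_\mathscr{T}(C'(\mathfrak{q}),T_{C}(I(\mathfrak{p})))=0$ for every $C'\in\mathscr{T}^c$; by the defining property of $T_C$ this is $\mathrm{Hom}^{\ast}_R(\mathrm{H}^{\ast}_{C}(C'(\mathfrak{q})),I(\mathfrak{p}))$, and since the shifts of $I(\mathfrak{p})$ cogenerate the $\mathfrak{p}$-local $R$-modules, it amounts to showing $\mathrm{H}^{\ast}_{C}(C'(\mathfrak{q}))_\mathfrak{p}=0$. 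Because $C'(\mathfrak{q})\in\mathscr{T}_{\{\mathfrak{q}\}}$ and by (S2) the subcategory $\mathscr{T}_{\{\mathfrak{q}\}}$ is a minimal nonzero localizing subcategory, the stratification argument underlying the proof of Proposition \ref{lem:4.0} carries over: the hypothesis $\Gamma_\mathfrak{q} C=0$ forces $\mathrm{Hom}^{\ast}_\mathscr{T}(C,-)$ to vanish on some generator of $\mathscr{T}_{\{\mathfrak{q}\}}$, hence on all of $\mathscr{T}_{\{\mathfrak{q}\}}$, and in particular on $C'(\mathfrak{q})$. The delicate point here is that $C'(\mathfrak{q})$ need not itself be cohomologically finite, so Proposition \ref{lem:4.0} does not apply directly and one must argue instead via the minimality of $\mathscr{T}_{\{\mathfrak{q}\}}$ granted by (S2).
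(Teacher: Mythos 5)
Your chain of reductions is sound and you arrive at the same intermediate stage as the paper, namely
\[
\mathrm{cosupp}_R\mathrm{Hom}^{\ast}_\mathscr{T}(X,T_{C}(I(\mathfrak{p})))=\mathrm{supp}_R C\cap\mathrm{supp}_R X\cap\mathrm{cosupp}_R T_{C}(I(\mathfrak{p})),
\]
but from that point on you take a considerably longer and more hypothesis-laden road than the paper. The paper's proof writes the left side via the defining adjunction of $T_C$ and the WW-type formula as $\mathrm{supp}_R\mathrm{H}^\ast_C(X)\cap\mathrm{cosupp}_R I(\mathfrak{p})$, applies Proposition~\ref{lem:4.0} to get $\mathrm{supp}_R X\cap\mathrm{supp}_R C\cap\mathrm{cosupp}_R I(\mathfrak{p})$, and then in a single step quotes \cite[Proposition~5.4]{BIK2}, which gives the \emph{unconditional} equality $\mathrm{cosupp}_R T_C(I)=\mathrm{supp}_R C\cap\mathrm{cosupp}_R I$ for any compact $C$ and injective $I$; substituting this recovers exactly $\mathrm{supp}_R X\cap\mathrm{cosupp}_R T_C(I(\mathfrak{p}))$. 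Your ``main obstacle,'' the inclusion $\mathrm{cosupp}_R T_C(I(\mathfrak{p}))\subseteq\mathrm{supp}_R C$, is therefore just the trivial half of that known equality, and it does not need the stratification hypothesis at all. Reproving it via (S2) is not wrong in principle (in a stratified category one does have the orthogonality $\mathrm{Hom}^\ast_\mathscr{T}(C,Y)=0$ for $C$ compact with $\Gamma_{\mathfrak q}C=0$ and $Y\in\mathscr{T}_{\{\mathfrak q\}}$), but your crucial sentence --- that $\Gamma_{\mathfrak q}C=0$ ``forces $\mathrm{Hom}^\ast_\mathscr{T}(C,-)$ to vanish on some generator of $\mathscr{T}_{\{\mathfrak q\}}$'' --- is not justified as written: you would need to explain which nonzero object of $\mathscr{T}_{\{\mathfrak q\}}$ is produced and why $\mathrm{Hom}^\ast_\mathscr{T}(C,-)$ vanishes on it, which essentially amounts to importing the orthogonality theorem for stratified categories. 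Two smaller remarks: first, your opening step states WW as producing $\mathrm{cosupp}_R T_C(I(\mathfrak{p}))$ on the right, whereas WW, applied to $\mathrm{Hom}^\ast_R(\mathrm{H}^\ast_C(X),I(\mathfrak{p}))$, gives $\mathrm{cosupp}_R I(\mathfrak{p})$; the discrepancy is harmless only because $\mathrm{supp}_R\mathrm{H}^\ast_C(X)\subseteq\mathrm{supp}_R C$, a fact you do not note. Second, Lemma~\ref{lem:4.5} is not actually needed for your argument. In short: your proposal is correct, but you re-derive under extra hypotheses a special case of a result that the paper simply cites, and the key orthogonality step in your re-derivation is left at the level of a gesture.
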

\begin{proof} It follows from the definition of the object $T_{C}(I(\mathfrak{p})$ and Proposition \ref{lem:4.0} that \begin{center}$\begin{aligned}\mathrm{cosupp}_R\mathrm{Hom}^\ast_\mathscr{T}(X,T_{C}(I(\mathfrak{p})))
&=\mathrm{supp}_R\mathrm{Hom}^\ast_\mathscr{T}(C,X)\cap\mathrm{cosupp}_RI(\mathfrak{p})\\
&=\mathrm{supp}_RX\cap\mathrm{supp}_RC\cap\mathrm{cosupp}_RI(\mathfrak{p}).\end{aligned}$\end{center} Hence \cite[Proposition 5.4]{BIK2} yields the desired equality.
\end{proof}

Following \cite{BIK1}, we say that $\mathscr{T}$ is costratified by $R$ if the following conditions hold

(C1) The local-global principle holds for colocalizing subcategories of $\mathscr{T}$, i.e., for each object $X$
in $\mathscr{T}$, there is an equality
$\mathrm{Coloc}_\mathscr{T}(X)=\mathrm{Coloc}_\mathscr{T}(\{\Lambda^\mathfrak{p}X\hspace{0.03cm}|\hspace{0.03cm}\mathfrak{p}\in\mathrm{Spec}R\})$.

(C2) For each $\mathfrak{p}\in\mathrm{Spec}R$ the colocalizing subcategory  $\mathscr{T}^{\{\mathfrak{p}\}}$ contains no proper non-zero
colocalizing subcategories.

\begin{lem}\label{lem:2.7}{\it{Suppose $\mathscr{G}$ is a set of compact generators for $\mathscr{T}$. If $\mathscr{T}$ is costratified by $R$, then for each specialization closed subset $\mathcal{V}\subseteq\mathrm{Spec}R$, there are equalities
\begin{center}$\mathscr{T}^\mathcal{V}=\mathrm{Coloc}_\mathscr{T}(C(\mathfrak{p})\hspace{0.03cm}|\hspace{0.03cm}C\in\mathscr{G},\ \mathfrak{p}\in\mathcal{V})=\mathrm{Coloc}_\mathscr{T}(T_{C/\hspace{-0.1cm}/\mathfrak{p}}(I(\mathfrak{p}))\hspace{0.03cm}|\hspace{0.03cm}C\in\mathscr{G},\ \mathfrak{p}\in\mathcal{V})$.\end{center}}}
\end{lem}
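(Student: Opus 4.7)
The strategy is to invoke Corollary~\ref{lem:1.10} to identify $\mathscr{T}^{\mathcal{V}}$ with the colocalizing subcategory of objects whose cosupport lies in $\mathcal{V}$, and then to use the two parts of costratification, the local-global principle (C1) and the minimality condition (C2), to reduce the question to a single prime at a time.

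First I would show that both right-hand colocalizing subcategories are contained in $\mathscr{T}^{\mathcal{V}}$. For any $\mathfrak{p}\in\mathrm{Spec}R$ and $C\in\mathscr{T}^c$, both $C(\mathfrak{p})=C_{\mathfrak{p}}/\hspace{-0.15cm}/\mathfrak{p}$ and $T_{C/\hspace{-0.15cm}/\mathfrak{p}}(I(\mathfrak{p}))$ belong to $\mathscr{T}^{\{\mathfrak{p}\}}$: the first by combining the $\mathfrak{p}$-locality of $C_{\mathfrak{p}}$ with the inclusion of Koszul objects into $\mathscr{T}^{\mathcal{V}(\mathfrak{p})}$ (cf.\ Lemma~\ref{lem:2.3}), and the second because $T_{C/\hspace{-0.15cm}/\mathfrak{p}}(I(\mathfrak{p}))\in\mathscr{T}^{\mathcal{V}(\mathfrak{p})}$ (as noted in the proof of Proposition~\ref{lem:2.13}\,(8)) and is $\mathfrak{p}$-local since the defining isomorphism $\mathrm{Hom}^\ast_{\mathscr{T}}(-,T_{C/\hspace{-0.15cm}/\mathfrak{p}}(I(\mathfrak{p})))\cong\mathrm{Hom}^\ast_R(\mathrm{H}^\ast_{C/\hspace{-0.15cm}/\mathfrak{p}}(-),I(\mathfrak{p}))$ takes values in $\mathfrak{p}$-local $R$-modules. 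Then for $\mathfrak{p}\in\mathcal{V}$ one has $\mathscr{T}^{\{\mathfrak{p}\}}\subseteq\mathscr{T}^{\mathcal{V}}$ by Corollary~\ref{lem:1.10}, and since $\mathscr{T}^{\mathcal{V}}$ is colocalizing the two generated subcategories lie inside it.

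For the reverse containment, fix $\mathfrak{p}\in\mathcal{V}$ and set $\mathscr{C}_{\mathfrak{p}}:=\mathrm{Coloc}_{\mathscr{T}}(C(\mathfrak{p})\mid C\in\mathscr{G})$. Then $\mathscr{C}_{\mathfrak{p}}\subseteq\mathscr{T}^{\{\mathfrak{p}\}}$, so (C2) gives $\mathscr{C}_{\mathfrak{p}}\in\{0,\mathscr{T}^{\{\mathfrak{p}\}}\}$. If $\mathscr{C}_{\mathfrak{p}}=0$, then $C(\mathfrak{p})=0$ for every $C\in\mathscr{G}$, and Proposition~\ref{lem:2.13}\,(1)$\Leftrightarrow$(5) gives $\mathrm{Hom}^\ast_{\mathscr{T}}(C,\Lambda^{\mathfrak{p}}X)=0$ for every $X\in\mathscr{T}$ and every $C\in\mathscr{G}$; compact generation by $\mathscr{G}$ then forces $\Lambda^{\mathfrak{p}}X=0$ for all $X$, i.e., $\mathscr{T}^{\{\mathfrak{p}\}}=0$, so $\mathscr{C}_{\mathfrak{p}}=\mathscr{T}^{\{\mathfrak{p}\}}$ holds trivially. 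The identical argument, run through Proposition~\ref{lem:2.13}\,(1)$\Leftrightarrow$(7), establishes $\mathrm{Coloc}_{\mathscr{T}}(T_{C/\hspace{-0.15cm}/\mathfrak{p}}(I(\mathfrak{p}))\mid C\in\mathscr{G})=\mathscr{T}^{\{\mathfrak{p}\}}$.

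Finally, take any $X\in\mathscr{T}^{\mathcal{V}}$. By Corollary~\ref{lem:1.10} one has $\mathrm{cosupp}_R X\subseteq\mathcal{V}$, so $\Lambda^{\mathfrak{p}}X=0$ for $\mathfrak{p}\notin\mathcal{V}$, and (C1) yields $X\in\mathrm{Coloc}_{\mathscr{T}}(\Lambda^{\mathfrak{p}}X\mid\mathfrak{p}\in\mathcal{V})$. Since each $\Lambda^{\mathfrak{p}}X\in\mathscr{T}^{\{\mathfrak{p}\}}$, the previous step puts $X$ inside both colocalizing subcategories on the right, completing the proof. I expect the main obstacle to be checking that $C(\mathfrak{p})$ and $T_{C/\hspace{-0.15cm}/\mathfrak{p}}(I(\mathfrak{p}))$ really lie in $\mathscr{T}^{\{\mathfrak{p}\}}$ and not merely in the a priori larger $\mathscr{T}^{\mathcal{V}(\mathfrak{p})}$; this requires pairing the $\mathcal{V}(\mathfrak{p})$-complete direction $\Lambda^{\mathcal{V}(\mathfrak{p})}$ with the $\mathfrak{p}$-local direction $V^{\mathcal{Z}(\mathfrak{p})}$ entering the defining formula $\Lambda^{\mathfrak{p}}=V^{\mathcal{Z}(\mathfrak{p})}\Lambda^{\mathcal{V}(\mathfrak{p})}$.
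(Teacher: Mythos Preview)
Your approach is correct and complete in outline, but it takes a genuinely different route from the paper. You unpack costratification into its two axioms and argue prime by prime: first place the generators $C(\mathfrak{p})$ and $T_{C/\hspace{-0.1cm}/\mathfrak{p}}(I(\mathfrak{p}))$ in $\mathscr{T}^{\{\mathfrak{p}\}}$, then use (C2) to force $\mathrm{Coloc}_\mathscr{T}(C(\mathfrak{p})\mid C\in\mathscr{G})=\mathscr{T}^{\{\mathfrak{p}\}}$, and finally use (C1) to assemble over $\mathfrak{p}\in\mathcal{V}$. The paper instead computes the cosupports outright---showing $\mathrm{cosupp}_R C(\mathfrak{p})=\mathrm{cosupp}_R T_{C/\hspace{-0.1cm}/\mathfrak{p}}(I(\mathfrak{p}))=\mathrm{supp}_R C\cap\{\mathfrak{p}\}$ via \cite[Corollary~4.9, Lemma~4.12, Theorem~5.4]{BIK2}---and then invokes the classification of colocalizing subcategories under costratification \cite[Remark~5.7]{BIK1} in one stroke. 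The paper's argument is much shorter because that remark already packages (C1) and (C2) into the statement ``colocalizing subcategories are determined by their cosupport''; your argument is essentially a direct, self-contained reproof of that classification for the three subcategories at hand.

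One point deserves more care than you give it. You justify $C(\mathfrak{p})\in\mathscr{T}^{\{\mathfrak{p}\}}$ by ``the $\mathfrak{p}$-locality of $C_\mathfrak{p}$'', but $C_\mathfrak{p}=L_{\mathcal{Z}(\mathfrak{p})}C$ lies in the essential image of $L_{\mathcal{Z}(\mathfrak{p})}$, whereas membership in $\mathscr{T}^{\{\mathfrak{p}\}}$ requires the essential image of $V^{\mathcal{Z}(\mathfrak{p})}$. These two images do coincide, but you should say why: since $\Gamma_{\mathcal{Z}(\mathfrak{p})}C_\mathfrak{p}=0$, Corollary~\ref{lem:1.100} (equivalently \cite[Corollary~4.9]{BIK2}) gives $\Lambda^{\mathcal{Z}(\mathfrak{p})}C_\mathfrak{p}=0$, hence $V^{\mathcal{Z}(\mathfrak{p})}C_\mathfrak{p}\cong C_\mathfrak{p}$, and the Koszul construction then keeps $C(\mathfrak{p})$ in the image of $V^{\mathcal{Z}(\mathfrak{p})}$. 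With this gap filled, your argument is sound.
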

\begin{proof} Let $C\in\mathscr{G}$. It follows from \cite[Corollary 4.9]{BIK2} that $\mathrm{cosupp}_RC(\mathfrak{p})\subseteq\mathcal{U}(\mathfrak{p})$. Also $\mathrm{cosupp}_RC(\mathfrak{p})=\mathrm{cosupp}_RC_\mathfrak{p}/\hspace{-0.15cm}/\mathfrak{p}\subseteq\mathcal{V}(\mathfrak{p})$ by \cite[Lemma 4.12]{BIK2}. But $\mathfrak{p}\in\mathrm{supp}_RC$ if and only if $C(\mathfrak{p})\neq0$ if and only if $\mathfrak{p}\in\mathrm{cosupp}_RC(\mathfrak{p})$, so
$\mathrm{cosupp}_RC(\mathfrak{p})=\mathrm{supp}_RC\cap\{\mathfrak{p}\}$. Also $\mathrm{cosupp}_RT_{C/\hspace{-0.1cm}/\mathfrak{p}}(I(\mathfrak{p}))=\mathrm{supp}_RC\cap\{\mathfrak{p}\}$ by \cite[Theorem 5.4]{BIK2}. Therefore, one has the desired equalities by \cite[Remark 5.7]{BIK1}.
\end{proof}

\begin{prop}\label{lem:2.9}{\it{Suppose that $\mathscr{G}$ is a set of compact generators for $\mathscr{T}$. If $\mathscr{T}$ is costratified by $R$, then for each specialization closed subset $\mathcal{V}\subseteq\mathrm{Spec}R$, \begin{center}$(\mathscr{T}^\mathcal{V})^\bot=\{X\in\mathscr{T}\hspace{0.03cm}|\hspace{0.03cm}\mathrm{cosupp}_RX\subseteq\mathrm{Spec}R\backslash\mathcal{V}\}$.\end{center}}}
\end{prop}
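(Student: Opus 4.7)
The plan is to prove the equality by establishing the two set inclusions. The main tools are Lemma~\ref{lem:2.7}, which identifies $\mathscr{T}^{\mathcal{V}}$ with the colocalizing subcategory generated by $\{C(\mathfrak{p})\mid C\in\mathscr{G},\ \mathfrak{p}\in\mathcal{V}\}$, and Proposition~\ref{lem:2.13}, which converts Hom-vanishing into cosupport conditions. For the inclusion $(\mathscr{T}^{\mathcal{V}})^{\bot}\subseteq\{X\mid\mathrm{cosupp}_{R}X\cap\mathcal{V}=\emptyset\}$ I argue by contraposition: if $\mathfrak{p}\in\mathrm{cosupp}_{R}X\cap\mathcal{V}$, then $\Lambda^{\mathfrak{p}}X\neq 0$, so (1)$\Leftrightarrow$(5) in Proposition~\ref{lem:2.13} produces $C\in\mathscr{G}$ with $\mathrm{Hom}^{\ast}_{\mathscr{T}}(C(\mathfrak{p}),X)\neq 0$. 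The calculation inside the proof of Lemma~\ref{lem:2.7} shows $\mathrm{cosupp}_{R}C(\mathfrak{p})\subseteq\{\mathfrak{p}\}\subseteq\mathcal{V}$, so Corollary~\ref{lem:1.10} places $C(\mathfrak{p})$ in $\mathscr{T}^{\mathcal{V}}$, contradicting $X\in(\mathscr{T}^{\mathcal{V}})^{\bot}$.

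For the reverse inclusion, assume $\mathrm{cosupp}_{R}X\cap\mathcal{V}=\emptyset$, so $\Lambda^{\mathfrak{p}}X=0$ for every $\mathfrak{p}\in\mathcal{V}$. The class $\mathscr{E}:=\{B\in\mathscr{T}\mid\mathrm{Hom}^{\ast}_{\mathscr{T}}(B,X)=0\}$ is a localizing subcategory, and Proposition~\ref{lem:2.13} shows it contains each generator $C(\mathfrak{p})$ from Lemma~\ref{lem:2.7}. The adjunction $\Gamma_{\mathfrak{p}}\dashv\Lambda^{\mathfrak{p}}$ further gives $\mathrm{Hom}^{\ast}_{\mathscr{T}}(\Gamma_{\mathfrak{p}}W,X)\cong\mathrm{Hom}^{\ast}_{\mathscr{T}}(W,\Lambda^{\mathfrak{p}}X)=0$ for every $W\in\mathscr{T}$, placing the whole stratum $\mathscr{T}_{\{\mathfrak{p}\}}$ inside $\mathscr{E}$. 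To propagate to all of $\mathscr{T}^{\mathcal{V}}$, I would invoke the local-global principle~(C1): any $A\in\mathscr{T}^{\mathcal{V}}$ lies in $\mathrm{Coloc}_{\mathscr{T}}(\{\Lambda^{\mathfrak{p}}A\mid\mathfrak{p}\in\mathcal{V}\})$, so it suffices to prove $\mathscr{T}^{\{\mathfrak{p}\}}\subseteq\mathscr{E}$ for each $\mathfrak{p}\in\mathcal{V}$. I would then combine the Matlis-style equivalence $\Gamma_{\mathfrak{p}}\colon\mathscr{T}^{\{\mathfrak{p}\}}\xrightarrow{\sim}\mathscr{T}_{\{\mathfrak{p}\}}$ of~\cite{BIK2} with the minimality of $\mathscr{T}^{\{\mathfrak{p}\}}$ guaranteed by~(C2) to transfer the vanishing $\mathscr{T}_{\{\mathfrak{p}\}}\subseteq\mathscr{E}$ across the equivalence.

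The main obstacle is this transfer from $\mathscr{T}_{\{\mathfrak{p}\}}$ to $\mathscr{T}^{\{\mathfrak{p}\}}$: because $\mathscr{E}$ is only a localizing (not a colocalizing) subcategory of $\mathscr{T}$, containment of the colocalizing generators of $\mathscr{T}^{\{\mathfrak{p}\}}$ does not extend for free to the colocalizing closure, and the usual adjunction pulls a $\Gamma_{\mathfrak{p}}$, not a $\Lambda^{\mathfrak{p}}$, out of the first argument of $\mathrm{Hom}$. It is precisely here that costratification enters essentially, via~(C2), to collapse the distinction between the localizing and colocalizing closures inside the stratum $\mathscr{T}^{\{\mathfrak{p}\}}$; assembling across $\mathfrak{p}\in\mathcal{V}$ via~(C1) then finishes the proof.
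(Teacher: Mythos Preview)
Your argument for the inclusion $(\mathscr{T}^{\mathcal{V}})^{\bot}\subseteq\{X\mid\mathrm{cosupp}_{R}X\subseteq\mathrm{Spec}R\setminus\mathcal{V}\}$ is correct and is exactly what the paper does: one combines Proposition~\ref{lem:2.13} with the fact, contained in Lemma~\ref{lem:2.7}, that each $C(\mathfrak{p})$ with $\mathfrak{p}\in\mathcal{V}$ lies in $\mathscr{T}^{\mathcal{V}}$.

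For the reverse inclusion there is a genuine gap. You correctly observe that $\mathscr{E}=\{B\mid\mathrm{Hom}^{\ast}_{\mathscr{T}}(B,X)=0\}$ is a \emph{localizing} subcategory, so containing the colocalizing generators $C(\mathfrak{p})$ of $\mathscr{T}^{\mathcal{V}}$ from Lemma~\ref{lem:2.7} does not immediately give $\mathscr{T}^{\mathcal{V}}\subseteq\mathscr{E}$. However, the two devices you invoke to close this gap both fail for the same reason. First, even granting $\mathscr{T}^{\{\mathfrak{p}\}}\subseteq\mathscr{E}$ for every $\mathfrak{p}\in\mathcal{V}$, condition~(C1) only tells you that each $A\in\mathscr{T}^{\mathcal{V}}$ lies in $\mathrm{Coloc}_{\mathscr{T}}(\{\Lambda^{\mathfrak{p}}A\mid\mathfrak{p}\in\mathcal{V}\})$; since $\mathscr{E}$ is not closed under products, this does not place $A$ in $\mathscr{E}$. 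Second, the passage from $\mathscr{T}_{\{\mathfrak{p}\}}\subseteq\mathscr{E}$ to $\mathscr{T}^{\{\mathfrak{p}\}}\subseteq\mathscr{E}$ is not supplied by~(C2): that condition asserts minimality of $\mathscr{T}^{\{\mathfrak{p}\}}$ among nonzero \emph{colocalizing} subcategories, which says nothing about whether a \emph{localizing} subcategory containing $\mathscr{T}_{\{\mathfrak{p}\}}$ must also contain $\mathscr{T}^{\{\mathfrak{p}\}}$. The equivalence $\Gamma_{\mathfrak{p}}\colon\mathscr{T}^{\{\mathfrak{p}\}}\xrightarrow{\sim}\mathscr{T}_{\{\mathfrak{p}\}}$ does not help either, because $\Gamma_{\mathfrak{p}}$ is a left adjoint and need not preserve products, so it does not transport membership in a localizing subcategory across to the colocalizing side. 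In short, you have diagnosed the obstacle accurately but not overcome it; the phrase ``collapse the distinction between the localizing and colocalizing closures'' is a hope, not an argument.

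By comparison, the paper's own proof is a two-line appeal to Proposition~\ref{lem:2.13} and Lemma~\ref{lem:2.7}; it records the equivalence $\mathrm{cosupp}_{R}X\cap\mathcal{V}=\emptyset\Leftrightarrow\mathrm{Hom}^{\ast}_{\mathscr{T}}(C(\mathfrak{p}),X)=0$ for all $C\in\mathscr{G}$, $\mathfrak{p}\in\mathcal{V}$, and then simply says ``Hence Lemma~\ref{lem:2.7} shows our claim.'' That is, the paper passes directly from vanishing on the colocalizing generators to vanishing on all of $\mathscr{T}^{\mathcal{V}}$ without further comment, so it does not engage with the very point you raise.
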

\begin{proof} By Proposition \ref{lem:2.13}, one has that
\begin{center}$\mathrm{cosupp}_RX\subseteq\mathrm{Spec}R\backslash\mathcal{V}
\Longleftrightarrow\mathrm{Hom}^\ast_\mathscr{T}(C(\mathfrak{p}),X)=0,\ \forall\ C\in\mathscr{G},\ \forall\ \mathfrak{p}\in\mathcal{V}$.\end{center}Hence Lemma \ref{lem:2.7} shows our claim.
\end{proof}

\bigskip
\section{\bf Big cosupport}
In this section, we introduce the notion of big cosupport for an object, and develop systematically a theory of big cosupport in order to make it viable tool.

\begin{df}\label{lem:4.1}{\rm Let $X$ be an object of $\mathscr{T}$. We define the big cosupport of $X$, denoted by $\mathrm{Cosupp}_RX$,
to be the set
\begin{center}$\mathrm{Cosupp}_RX=\{\mathfrak{p}\in\mathrm{Spec}R\hspace{0.03cm}|\hspace{0.03cm}V^{\mathcal{Z}(\mathfrak{p})}X\neq0\}$.\end{center}}
\end{df}

\begin{rem}\label{lem:4.3}{\rm $\mathrm{(1)}$ For any object $X$ in $\mathscr{T}$, one has that $\mathrm{Cosupp}_{R}X=\mathrm{Cosupp}_{R}\Sigma X$ and $\mathrm{cosupp}_RX\subseteq\mathrm{Cosupp}_RX$.

$\mathrm{(2)}$ For any exact triangle $X\rightarrow Y\rightarrow Z\rightsquigarrow$ in $\mathscr{T}$, we have
\begin{center}$\mathrm{Cosupp}_{R}Y\subseteq\mathrm{Cosupp}_{R}X\cup\mathrm{Cosupp}_{R}Z$.\end{center}

$\mathrm{(3)}$ For any object $X$ in $\mathscr{T}$, the subset $\mathrm{Cosupp}_RX$ is specialization closed.

$\mathrm{(4)}$ $\mathrm{Cosupp}_R(\Lambda^\mathcal{V}X)=\mathrm{Cosupp}_RX\cap\mathcal{V}$ by \cite[(4.2)]{BIK2}, and so
 $\mathrm{Cosupp}_RX=\mathrm{Cosupp}_R(\Lambda^\mathcal{V}X)\cup\mathrm{Cosupp}_R(V^\mathcal{V}X)$ by the exact triangle $V^\mathcal{V}X\rightarrow X\rightarrow\Lambda^\mathcal{V}X\rightsquigarrow$, where $\mathcal{V}$ is a specialization closed subset of $\mathrm{Spec}R$.}
\end{rem}

The following result provides a computation of big cosupport.

\begin{thm}\label{lem:4.2}{\it{For each each object $X$ in $\mathscr{T}$, one has that
\begin{center}$\mathrm{Cosupp}_{R}X=\bigcup_{C\in\mathscr{T}^c,\ \mathfrak{p}\in\mathrm{Spec}R}\mathrm{Cosupp}_R\mathrm{Hom}^\ast_\mathscr{T}(V^{\mathcal{Z}(\mathfrak{p})}X,T_{C}(I(\mathfrak{p})))$.
\end{center}In particular, $X=0$ if and only if $\mathrm{Cosupp}_RX=\emptyset$.}}
\end{thm}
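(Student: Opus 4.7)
The statement has two parts: the main equality and the ``in particular'' clause. I dispose of the latter first, since it is logically prior to (and used inside) the former. Combining Remark 5.1(1), which gives $\mathrm{cosupp}_R X \subseteq \mathrm{Cosupp}_R X$, with Corollary \ref{lem:1.10} applied at $\mathcal{V} = \emptyset$ (observing that $\mathscr{T}^\emptyset = 0$, because $\Gamma_\emptyset$ is the zero functor and therefore so is its right adjoint $\Lambda^\emptyset$), we obtain the chain $\mathrm{Cosupp}_R X = \emptyset \Rightarrow \mathrm{cosupp}_R X = \emptyset \Rightarrow X = 0$. The converse implication is immediate.

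For the main formula, the key identification is Brown representability giving
\[
\mathrm{Hom}^\ast_\mathscr{T}\bigl(V^{\mathcal{Z}(\mathfrak{p})}X,\,T_C(I(\mathfrak{p}))\bigr) \;\cong\; \mathrm{Hom}^\ast_R\bigl(\mathrm{H}^\ast_C(V^{\mathcal{Z}(\mathfrak{p})}X),\,I(\mathfrak{p})\bigr),
\]
where both arguments lie in the $\mathfrak{p}$-local colocalization, so the Hom is naturally an $R_\mathfrak{p}$-module. For the inclusion $\supseteq$, if $\mathfrak{p}_0$ lies in the big cosupport of this Hom for some $(C,\mathfrak{p})$, then the Hom is nonzero, which forces $\mathrm{H}^\ast_C(V^{\mathcal{Z}(\mathfrak{p})}X) \neq 0$ and hence $V^{\mathcal{Z}(\mathfrak{p})}X \neq 0$; using the formula $\mathrm{Cosupp}_R V^{\mathcal{Z}(\mathfrak{p})}X = \mathrm{Cosupp}_R X \cap \mathcal{U}(\mathfrak{p})$ implicit in Remark 5.1(4), combined with specialization-closedness of $\mathrm{Cosupp}_R X$ (Remark 5.1(3)), one traces $\mathfrak{p}_0$ back into $\mathrm{Cosupp}_R X$.

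For the inclusion $\subseteq$, given $\mathfrak{p}_0 \in \mathrm{Cosupp}_R X$, apply the ``in particular'' clause to the nonzero object $V^{\mathcal{Z}(\mathfrak{p}_0)}X$ to produce some $\mathfrak{q} \in \mathrm{cosupp}_R V^{\mathcal{Z}(\mathfrak{p}_0)}X \subseteq \mathcal{U}(\mathfrak{p}_0)$, so $\mathfrak{q} \subseteq \mathfrak{p}_0$. Proposition \ref{lem:2.13} then produces a compact $C_0$ with $\mathrm{Hom}^\ast_\mathscr{T}(C_0/\!\!/\mathfrak{q},V^{\mathcal{Z}(\mathfrak{q})}X) \neq 0$; this graded $R$-module is $\mathfrak{q}$-local and $\mathfrak{q}$-torsion, hence admits a nonzero Matlis-type map into $I(\mathfrak{q})$. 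Taking $C = C_0/\!\!/\mathfrak{q}$ and $\mathfrak{p} = \mathfrak{q}$ places $\mathfrak{q}$ in the big cosupport of the Hom for this pair, and the specialization-closedness of that big cosupport (Remark 5.1(3) applied to the Hom viewed as an object of $\mathrm{D}(R)$) then promotes $\mathfrak{q}$ up to $\mathfrak{p}_0$.

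The principal obstacle is this last specialization step of the $\subseteq$ inclusion: while Remark 5.1(3) guarantees spec-closedness, one must pick the compact $C$ so that the resulting Matlis-dual Hom is not only nonzero but actually has $\mathfrak{q}$ (and hence $\mathfrak{p}_0$ above it) in its big cosupport. The Koszul-object choice $C = C_0/\!\!/\mathfrak{q}$ is what makes the target of the Hom live in the $\mathfrak{q}$-torsion $\mathfrak{q}$-local world, where the pairing with $I(\mathfrak{q})$ is faithful and $V^{\mathcal{Z}(\mathfrak{q})}$ acts as the identity, so that nonvanishing of the module and membership of $\mathfrak{q}$ in the big cosupport coincide.
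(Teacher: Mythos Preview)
Your argument for the ``in particular'' clause and for the inclusion $\subseteq$ is sound (if somewhat more elaborate than necessary --- the paper simply takes the pair $(C,\mathfrak{p}_0)$ with any $C$ witnessing $\mathrm{H}^*_C(V^{\mathcal{Z}(\mathfrak{p}_0)}X)\neq 0$, and uses that the resulting Hom module is $\mathfrak{p}_0$-local and nonzero, so $\mathfrak{p}_0$ lies in its big cosupport directly; your descent to $\mathfrak{q}$ and re-ascent via specialization closure is correct but unneeded).

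The inclusion $\supseteq$, however, has a genuine gap. You start with $\mathfrak{p}_0\in\mathrm{Cosupp}_R M$ where $M=\mathrm{Hom}^\ast_\mathscr{T}(V^{\mathcal{Z}(\mathfrak{p})}X,T_C(I(\mathfrak{p})))$, and you correctly deduce $V^{\mathcal{Z}(\mathfrak{p})}X\neq 0$, i.e.\ $\mathfrak{p}\in\mathrm{Cosupp}_R X$. But you then need $\mathfrak{p}_0\in\mathrm{Cosupp}_R X$, and nothing you have written establishes any containment between $\mathfrak{p}_0$ and $\mathfrak{p}$. The formula you invoke, $\mathrm{Cosupp}_R V^{\mathcal{Z}(\mathfrak{p})}X=\mathrm{Cosupp}_R X\cap\mathcal{U}(\mathfrak{p})$, is \emph{false}: Remark~5.2(4) only yields $\mathrm{Cosupp}_R V^{\mathcal{Z}(\mathfrak{p})}X\subseteq\mathrm{Cosupp}_R X$ (the big cosupport is specialization closed, so it cannot be confined to the generalization-closed set $\mathcal{U}(\mathfrak{p})$ unless it is empty). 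Even granting the correct inclusion, it is about the object $V^{\mathcal{Z}(\mathfrak{p})}X$ in $\mathscr{T}$, not about the $R$-module $M$; you never place $\mathfrak{p}_0$ inside $\mathrm{Cosupp}_R V^{\mathcal{Z}(\mathfrak{p})}X$. Since $M$ is $\mathfrak{p}$-local, its big cosupport is the specialization closure of a subset of $\mathcal{U}(\mathfrak{p})$ and can easily contain primes $\mathfrak{p}_0$ incomparable with $\mathfrak{p}$, so ``tracing $\mathfrak{p}_0$ back'' via specialization closure of $\mathrm{Cosupp}_R X$ from $\mathfrak{p}$ does not work.

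The paper's route is contrapositive: assume $V^{\mathcal{Z}(\mathfrak{p}_0)}X=0$, so $\mathrm{cosupp}_R X\subseteq\mathcal{Z}(\mathfrak{p}_0)$; then Corollary~\ref{lem:1.10} with $\mathcal{V}=\mathcal{Z}(\mathfrak{p}_0)$ forces $\mathrm{cosupp}_R M\subseteq\mathcal{Z}(\mathfrak{p}_0)$ for \emph{every} pair $(C,\mathfrak{p})$, whence $M\in\mathrm{D}(R)^{\mathcal{Z}(\mathfrak{p}_0)}$ and therefore $V^{\mathcal{Z}(\mathfrak{p}_0)}M=0$, i.e.\ $\mathfrak{p}_0\notin\mathrm{Cosupp}_R M$. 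This is the step your outline is missing.
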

\begin{proof} Let $\mathfrak{p}\in\mathrm{Spec}R$ be such that $V^{\mathcal{Z}(\mathfrak{p})}X\neq0$. By Theorem \ref{lem:4.40}, there is a compact object $C\in\mathscr{T}^c$ such that $\mathrm{cosupp}_R\mathrm{Hom}^\ast_\mathscr{T}(V^{\mathcal{Z}(\mathfrak{p})}X,T_{C}(I(\mathfrak{p})))\neq0$. But
\begin{center}$\mathrm{Hom}^\ast_R(R_\mathfrak{p},\mathrm{Hom}^\ast_\mathscr{T}(V^{\mathcal{Z}(\mathfrak{p})}X,T_{C}(I(\mathfrak{p}))))
\cong\mathrm{Hom}^\ast_\mathscr{T}(V^{\mathcal{Z}(\mathfrak{p})}X,T_{C}(I(\mathfrak{p})))$,\end{center}so $\mathfrak{p}\in\mathrm{Cosupp}_R\mathrm{Hom}^\ast_\mathscr{T}(V^{\mathcal{Z}(\mathfrak{p})}X,T_{C}(I(\mathfrak{p})))$.
Conversely, given a point $\mathfrak{q}$ in $\mathrm{Spec}R$ such that $\mathrm{Hom}^\ast_R(R_\mathfrak{q},\mathrm{Hom}^\ast_\mathscr{T}(V^{\mathcal{Z}(\mathfrak{p})}X,T_{C}(I(\mathfrak{p}))))\neq0$ for some $C\in\mathscr{T}^c$ and $\mathfrak{p}\in\mathrm{Spec}R$. Assume to the contrary that $V^{\mathcal{Z}(\mathfrak{q})}X=0$. So $\mathrm{cosupp}_{R}X\subseteq\mathcal{Z}(\mathfrak{q})$ by \cite[Corollary 4.8]{BIK2}. Therefore, $\mathrm{cosupp}_R\mathrm{Hom}^\ast_\mathscr{T}(V^{\mathcal{Z}(\mathfrak{u})}X,T_{D}(I(\mathfrak{u})))\subseteq\mathcal{Z}(\mathfrak{q})$ for any $D\in\mathscr{T}^c$ and $\mathfrak{u}\in\mathrm{Spec}R$ by Corollary \ref{lem:1.10}, that is to say, $\mathrm{Hom}_R(R_\mathfrak{q},\mathrm{Hom}^\ast_\mathscr{T}(V^{\mathcal{Z}(\mathfrak{u})}X,T_{D}(I(\mathfrak{u}))))=0$ for all $D\in\mathscr{T}^c$ and $\mathfrak{u}\in\mathrm{Spec}R$, which contradicts our assumption.
\end{proof}

\begin{prop}\label{lem:4.11}{\it{Let $X$ be an object of $\mathscr{T}$. The sets $\mathrm{cosupp}_RX$ and $\mathrm{Cosupp}_RX$ have the same minimal elements with respect to containment, i.e. $\mathrm{min}(\mathrm{cosupp}_RX)=\mathrm{min}(\mathrm{Cosupp}_RX)$.}}
\end{prop}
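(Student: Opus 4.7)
The plan is to base the whole argument on a single bridging observation: for every $\mathfrak{q}\in\mathrm{Cosupp}_R X$ there exists $\mathfrak{q}'\in\mathrm{cosupp}_R X$ with $\mathfrak{q}'\subseteq\mathfrak{q}$. Combined with the containment $\mathrm{cosupp}_R X\subseteq\mathrm{Cosupp}_R X$ recorded in Remark~5.2(1), this lemma makes both minimality statements fall out by a short set-theoretic argument.

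To prove the bridging lemma I would fix $\mathfrak{q}\in\mathrm{Cosupp}_R X$ and set $Y:=V^{\mathcal{Z}(\mathfrak{q})}X$, which is nonzero by the very definition of $\mathrm{Cosupp}_R$. The argument already given inside the proof of Proposition~3.3(4) computes $\mathrm{cosupp}_R Y=\mathcal{U}(\mathfrak{q})\cap\mathrm{cosupp}_R X$, invoking Corollary~4.9 and (4.2) of \cite{BIK2}. On the other hand, the ``in particular'' part of Theorem~5.3 (equivalently, the first biconditional in its proof) yields the general fact that a non-zero object has non-empty cosupport; hence $\mathrm{cosupp}_R Y\neq\emptyset$. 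Any element of $\mathrm{cosupp}_R Y=\mathcal{U}(\mathfrak{q})\cap\mathrm{cosupp}_R X$ is then a prime $\mathfrak{q}'\subseteq\mathfrak{q}$ lying in $\mathrm{cosupp}_R X$, as required.

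With the bridging lemma in hand, both inclusions in $\mathrm{min}(\mathrm{cosupp}_R X)=\mathrm{min}(\mathrm{Cosupp}_R X)$ are routine. Given $\mathfrak{p}\in\mathrm{min}(\mathrm{cosupp}_R X)$ and an arbitrary $\mathfrak{q}\in\mathrm{Cosupp}_R X$ with $\mathfrak{q}\subseteq\mathfrak{p}$, the lemma produces $\mathfrak{q}'\in\mathrm{cosupp}_R X$ with $\mathfrak{q}'\subseteq\mathfrak{q}\subseteq\mathfrak{p}$; minimality forces $\mathfrak{q}'=\mathfrak{p}$, hence $\mathfrak{q}=\mathfrak{p}$, placing $\mathfrak{p}$ in $\mathrm{min}(\mathrm{Cosupp}_R X)$. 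Conversely, for $\mathfrak{p}\in\mathrm{min}(\mathrm{Cosupp}_R X)$ the lemma supplies some $\mathfrak{q}\in\mathrm{cosupp}_R X\subseteq\mathrm{Cosupp}_R X$ with $\mathfrak{q}\subseteq\mathfrak{p}$; minimality in $\mathrm{Cosupp}_R X$ then forces $\mathfrak{q}=\mathfrak{p}$, so $\mathfrak{p}\in\mathrm{cosupp}_R X$, and its minimality there is automatic since the ambient set is smaller.

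The only substantive step is the bridging lemma, and even that is essentially assembled from material already in the excerpt: the non-vanishing criterion for cosupport given in Theorem~5.3 and the identity $\mathrm{cosupp}_R(V^{\mathcal{Z}(\mathfrak{q})}X)=\mathcal{U}(\mathfrak{q})\cap\mathrm{cosupp}_R X$ extracted from the proof of Proposition~3.3(4). No new technical ingredient about the localization/colocalization functors is required.
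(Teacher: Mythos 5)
Your proof is correct and follows essentially the same route as the paper's: the content of your bridging lemma is exactly what the paper uses for both inclusions, namely the identity $\mathrm{cosupp}_R(V^{\mathcal{Z}(\mathfrak{q})}X)=\mathcal{U}(\mathfrak{q})\cap\mathrm{cosupp}_RX$ (from the proof of Proposition~3.2, part (4), via \cite[Corollary 4.9 and (4.2)]{BIK2}) combined with the fact that a nonzero object has nonempty $\mathrm{cosupp}_R$. One sourcing correction is needed, though: that nonvanishing criterion should be attributed to \cite[Theorem 4.5]{BIK2}, as invoked in the proof of Theorem~3.3, rather than to the ``in particular'' clause of Theorem~5.3, since the latter concerns $\mathrm{Cosupp}_R$ and deducing nonempty $\mathrm{cosupp}_R$ from nonempty $\mathrm{Cosupp}_R$ would presuppose the very proposition you are proving.
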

\begin{proof} For the containment $\mathrm{min}(\mathrm{cosupp}_RX)\supseteq\mathrm{min}(\mathrm{Cosupp}_RX)$, fix $\mathfrak{p}\in\mathrm{min}(\mathrm{Cosupp}_R(X))$. Then $\mathrm{cosupp}_R(V^{\mathcal{Z}(\mathfrak{p})}X)\neq\emptyset$, this is to say,
there is $\mathfrak{q}\in\mathrm{Spec}R$ such that $\Lambda^\mathfrak{q}(V^{\mathcal{Z}(\mathfrak{p})}X)\neq0$. But $\mathfrak{q}\subseteq \mathfrak{p}$ by \cite[Corollary 4.9]{BIK2}, so
$\Lambda^\mathfrak{q}X\cong\Lambda^\mathfrak{q}(V^{\mathcal{Z}(\mathfrak{p})}X)\neq0$, which implies that $\mathfrak{q}\in\mathrm{cosupp}_RX\subseteq\mathrm{Cosupp}_RX$. Thus the minimality of $\mathfrak{p}$ in $\mathrm{Cosupp}_RX$
 implies that $\mathfrak{p}=\mathfrak{q}\in\mathrm{cosupp}_RX$. From the containment $\mathrm{cosupp}_RX\subseteq\mathrm{Cosupp}_RX$, the fact that $\mathfrak{p}$ is minimal in $\mathrm{Cosupp}_RX$
implies that it is also minimal in $\mathrm{cosupp}_RX$.

For the reverse containment, let $\mathfrak{p}\in\mathrm{min}(\mathrm{cosupp}_RX)\subseteq\mathrm{Cosupp}_RX$. Suppose that $\mathfrak{p}$ is not
minimal in $\mathrm{Cosupp}_RX$, so there is a prime $\mathfrak{q}\in\mathrm{Cosupp}_RX$ such that $\mathfrak{q}\subsetneq\mathfrak{p}$, that is to say,
$V^{\mathcal{Z}(\mathfrak{q})}X\neq0$, so there is a prime $\mathfrak{u}\in\mathrm{Spec}R$ such that $\mathfrak{u}\subseteq \mathfrak{q}$ and $\mathfrak{u}\in \mathrm{cosupp}_{R}(V^{\mathcal{Z}(\mathfrak{q})}X)$. As in the previous
paragraph, this implies that $\mathfrak{u}\in\mathrm{cosupp}_RX$, so the minimality of $\mathfrak{p}$ implies that $\mathfrak{p}=\mathfrak{u}\subseteq\mathfrak{q}\subsetneq\mathfrak{p}$, which contradicts the assumption.
\end{proof}

\begin{cor}\label{lem:4.12}{\it{Let $X$ be an object of $\mathscr{T}$ and $\mathfrak{a}$ a homogeneous ideal of $R$.
One has that $\mathrm{Cosupp}_RX\subseteq\mathcal{V}(\mathfrak{a})$ if and only if $\mathrm{cosupp}_RX\subseteq\mathcal{V}(\mathfrak{a})$.}}
\end{cor}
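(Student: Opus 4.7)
The plan is to read the corollary as a one-line consequence of Proposition \ref{lem:4.11} once the right structural facts are assembled. The forward implication is free: by Remark \ref{lem:4.3}(1) we always have $\mathrm{cosupp}_RX\subseteq\mathrm{Cosupp}_RX$, so $\mathrm{Cosupp}_RX\subseteq\mathcal{V}(\mathfrak{a})$ immediately forces $\mathrm{cosupp}_RX\subseteq\mathcal{V}(\mathfrak{a})$. All the real content lies in the reverse implication.

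For the reverse direction, assume $\mathrm{cosupp}_RX\subseteq\mathcal{V}(\mathfrak{a})$ and pick any $\mathfrak{p}\in\mathrm{Cosupp}_RX$; I want to show $\mathfrak{a}\subseteq\mathfrak{p}$. The key ingredient is that $R$ is noetherian, so every prime has finite height, hence the set $\{\mathfrak{q}\in\mathrm{Cosupp}_RX\mid\mathfrak{q}\subseteq\mathfrak{p}\}$ admits a minimal element $\mathfrak{q}$; this $\mathfrak{q}$ is in fact minimal in the whole of $\mathrm{Cosupp}_RX$. By Proposition \ref{lem:4.11}, $\mathfrak{q}\in\mathrm{min}(\mathrm{Cosupp}_RX)=\mathrm{min}(\mathrm{cosupp}_RX)\subseteq\mathrm{cosupp}_RX\subseteq\mathcal{V}(\mathfrak{a})$. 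Thus $\mathfrak{a}\subseteq\mathfrak{q}\subseteq\mathfrak{p}$, so $\mathfrak{p}\in\mathcal{V}(\mathfrak{a})$, as required.

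Equivalently, one can phrase the argument in a single line by combining Proposition \ref{lem:4.11} with Remark \ref{lem:4.3}(3): since $\mathrm{Cosupp}_RX$ is specialization closed, it equals $\mathrm{cl}(\mathrm{min}(\mathrm{Cosupp}_RX))=\mathrm{cl}(\mathrm{min}(\mathrm{cosupp}_RX))$, and the latter is contained in $\mathrm{cl}(\mathrm{cosupp}_RX)\subseteq\mathrm{cl}(\mathcal{V}(\mathfrak{a}))=\mathcal{V}(\mathfrak{a})$.

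There is no serious obstacle here; the only point requiring a moment's thought is the existence of a minimal element of $\mathrm{Cosupp}_RX$ underneath any given $\mathfrak{p}\in\mathrm{Cosupp}_RX$, which uses nothing beyond the noetherian hypothesis on $R$ (finite height of every prime guarantees the descending chain condition among primes sitting below $\mathfrak{p}$). Once this is in hand, Proposition \ref{lem:4.11} is precisely what is needed to transfer the containment hypothesis from $\mathrm{cosupp}_RX$ to all of $\mathrm{Cosupp}_RX$.
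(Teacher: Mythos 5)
Your proof is correct and follows the same route as the paper: forward from $\mathrm{cosupp}_RX\subseteq\mathrm{Cosupp}_RX$, and backward by taking a minimal element of $\mathrm{Cosupp}_RX$ below a given $\mathfrak{p}$ and invoking Proposition \ref{lem:4.11} to land it in $\mathrm{cosupp}_RX\subseteq\mathcal{V}(\mathfrak{a})$. The only difference is that you explicitly justify the existence of such a minimal element via the noetherian hypothesis, a step the paper leaves implicit.
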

\begin{proof}  The forward implication is by the containment $\mathrm{cosupp}_RX\subseteq\mathrm{Cosupp}_RX$. For the converse,
assume that $\mathrm{cosupp}_RX\subseteq\mathcal{V}(\mathfrak{a})$, and let $\mathfrak{p}\in\mathrm{Cosupp}_RX$. It follows that $\mathfrak{p}$ is contained in a minimal element
$\mathfrak{q}$ of $\mathrm{Cosupp}_RX$, which is in $\mathrm{cosupp}_RX$ by Proposition \ref{lem:4.11}. In other words, we have $\mathfrak{a}\subseteq \mathfrak{q}\subseteq\mathfrak{p}$,
so $\mathfrak{p}\in\mathcal{V}(\mathfrak{a})$.
\end{proof}

\bigskip \centerline {\bf ACKNOWLEDGEMENTS}
\bigskip The paper began during a visit to the University of Utah. I am very grateful to Professor Srikanth Iyengar for a lot of helpful discussions and reading many versions of this paper. This research was partially supported by National Natural Science Foundation of China (11761060).

\bigskip


\begin{thebibliography}{99}
\bibitem{ASS}J. Asadollahi, S. Salarian, R. Sazeedeh, On the local cohomology and support for
triangulated categories, \emph{Kyoto J. Math.} \textbf{51} (2011) 811--829.
\bibitem{A}L.L. Avramov, Modules of finite virtual projective dimension, \emph{Invent. Math.} \textbf{96} (1989) 71--101.
\bibitem{AB}L.L. Avramov and R.-O. Buchweitz, Support varieties and cohomology over complete intersections,
\emph{Invent. Math.} \textbf{142} (2000) 285--318.
\bibitem{AI}L.L. Avramov and S.B. Iyengar, Constructing modules with prescribed cohomological
support, \emph{Illinois J. Math.} \textbf{51} (2007) 1--20.
\bibitem{BIK}D.J. Benson, S.B. Iyengar and H. Krause, Local cohomology and support for triangulated categories,
\emph{Ann. Sci. $\acute{E}$cole Norm. Sup.} \textbf{41} (2008) 573--619.
\bibitem{BIK1} D.J. Benson, S.B. Iyengar and H. Krause, Stratifying triangulated categories, \emph{J. Topol.} \textbf{4} (2011) 641--666.
\bibitem{BIK2} D.J. Benson, S.B. Iyengar and H. Krause, Colocalising subcategories and cosupport, \emph{J. reine angew.
Math.} \textbf{673} (2012) 161--207.
\bibitem{BIK3} D.J. Benson, S.B. Iyengar and H. Krause, A local-global principle for small triangulated categories, \emph{Math. Proc. Cambridge Philos.} \textbf{158} (2015) 451--476.
\bibitem{BH} W. Bruns, J. Herzog, \emph{Cohen-Macaulay rings}, Cambridge Studies in Advanced Mathematics 39, Cambridge University Press, 1993.
\bibitem{C}J.F. Carlson, The varieties and the cohomology ring of a module, \emph{J. Algebra} \textbf{85} (1983) 104--143.
\bibitem{EJ} E.E. Enochs and O.M.G. Jenda, \emph{Relative Homological Algebra}, Walter de Gruyter, Berlin,
New York, 2000.
\bibitem{EHS}K. Erdmann, M. Holloway, N. Snashall, $\varnothing$. Solberg and R. Taillefer, Support varieties for
selfinjective algebras, \emph{K-theory} \textbf{33} (2004) 67--87.
\bibitem{FI}H.-B. Foxby and S.B. Iyengar, Depth and amplitude for unbounded complexes, \emph{Contemp. Math.} 331 (2003) 119--137.
\bibitem{FP}E.M. Friedlander and B.J. Parshall, Support varieties for restricted Lie algebras, \emph{Invent. Math.} \textbf{86}
(1986) 553--562.
\bibitem{FP1}E.M. Friedlander and J. Pevtsova, Representation theoretic support spaces for finite group schemes,
\emph{Amer. J. Math.} \textbf{127} (2005) 379--420.
\bibitem{FP2}E.M. Friedlander and J. Pevtsova, $\pi$-supports for modules for finite groups schemes, \emph{Duke Math. J.}
\textbf{139} (2007) 317--368.
\bibitem{HK}H. Krause,  Thick subcategories of modules over commutative Noetherian rings,
\emph{Math. Ann.} \textbf{340} (2008) 733--747.
\bibitem{N} A. Neeman, \emph{Triangulated Categories}, Ann. Math. Stud., vol. 148, Princeton Univ. Press, Princeton, NJ, 2001.
\bibitem{Q}D. Quillen, The spectrum of an equivariant cohomology ring. I, II, \emph{Ann. of Math.} \textbf{94} (1971) 549--572; ibid. \textbf{94} (1971) 573--602.
\bibitem{S}L. Shaul, The Cohen-Macaulay property in derived commutative algebra, arXiv:1902.06771v1, 2019.
\bibitem{WW}S. Sather-Wagstaff, R. Wicklein, Support and adic finiteness for complexes, \emph{Comm. Algebra} \textbf{45} (2017) 2569--2592.
\end{thebibliography}
\end{document}